\newtheorem{theorem}{Theorem}[section]
\newtheorem{lemma}[theorem]{Lemma}
\newtheorem{corollary}[theorem]{Corollary}
\newtheorem{proposition}[theorem]{Proposition}
\newtheorem{assumption}[theorem]{Assumption}
\theoremstyle{definition}
\newtheorem{definition}[theorem]{Definition}
\newtheorem{example}[theorem]{Example}
\newtheorem*{example*}{Example}
\newtheorem{remark}[theorem]{Remark}
\newtheorem{algorithm}[theorem]{Algorithm}
\numberwithin{equation}{section}
\renewcommand{\dist}{\overline{\operatorname{dist}}}
\newcommand{\abst}{\operatorname{dist}}
\newcommand{\tol}{\texttt{\textup{tol}}}
\begin{document}

\title[Adaptive Time Stepping for Rate-Independent Systems]{An Adaptive Time Stepping Scheme for 
Rate-Independent Systems with Non-Convex Energy}\thanks{This research was supported by the German Research Foundation (DFG) under grant 
number~ME~3281/9-1 within the priority program Non-smooth and Complementarity-based
Distributed Parameter Systems: Simulation and Hierarchical Optimization (SPP~1962).}

\author{Merlin Andreia} \address{Technische Universit\"at Dortmund, Fakult\"at f\"ur
  Mathematik, Lehrstuhl LSX, Vogelpothsweg 87, 44227 Dortmund, Germany}
\email{merlin.andreia@tu-dortmund.de}

\author{Christian Meyer} \address{Technische Universit\"at Dortmund, Fakult\"at f\"ur
  Mathematik, Lehrstuhl LSX, Vogelpothsweg 87, 44227 Dortmund, Germany}
\email{christian2.meyer@tu-dortmund.de}

\subjclass[2010]{65J08, 65K15, 65M12, 65M50} 
\date{\today} 
\keywords{Rate-independent systems, local incremental minimization schemes, 
parametrized balanced viscosity solutions}

\begin{abstract} 
    We investigate a local incremental stationary scheme for the numerical solution of rate-independent systems. 
    Such systems are characterized by a (possibly) non-convex energy and a dissipation potential, 
    which is positively homogeneous of degree one. Due to the non-convexity of the energy, 
    the system does in general not admit a time-continuous solution.
    In order to resolve these potential discontinuities, the algorithm produces a sequence of 
    state variables and physical time points as functions of a curve parameter.    
    The main novelty of our approach in comparison to existing methods is an adaptive choice 
    of the step size for the update of the curve parameter depending on a prescribed tolerance for the 
    residua in the energy-dissipation balance and in a complementarity relation concerning the so-called
    local stability condition. 
    It is proven that, for tolerance tending to zero, the piecewise affine approximations generated by
    the algorithm converge (weakly) to a so-called $\V$-parametrized balanced viscosity solution.
    Numerical experiments illustrate the theoretical findings and show that an adaptive choice of the step size 
    indeed pays off as they lead to a significant increase of the step size during sticking and in viscous jumps.
\end{abstract}

\maketitle


\section{Introduction}
This paper is concerned with the design and analysis of an adaptive time stepping scheme 
for rate-independent systems of the form 
\begin{equation}\label{eq:ris}
    0 \in \partial \RR(\dot z(t)) + D_z \II(t, z(z)) \quad \text{f.a.a.\ } t \in (0,T), 
    \quad z(0) = z_0,
\end{equation}
where $\RR: \XX \to [0,\infty)$ denotes an 1-homogeneous and convex dissipation functional, 
while $\II : \ZZ \to \R$ is a non-convex, but smooth energy functional. The precise assumptions 
on the data and the involved spaces will be given in Section~\ref{sec:assu} below.
Systems of the form \eqref{eq:ris} appear in various applications such as elastoplasticity, 
phase field models for damage evolution, or shape-memory alloys, see e.g.\ \cite{dalMaso, KRZ}
and the references therein.

It is well known that, caused by the interplay between the non-smooth, but convex dissipation and 
the smooth, but non-convex energy, there is in general no solution that fulfills \eqref{eq:ris} a.e.\ in time. 
We refer to the counterexample in \cite[Section~2.3]{sthomas}.
For this reason, several generalized notions of solutions have been introduced in 
the recent past, among them local solutions, global energetic solutions, and balanced viscosity (BV) solutions. 
We refer to \cite{MRRIS} for a comprehensive survey on this matter.
All these notions have in common that they allow for discontinuous solutions, even if the 
data, in particular the force driving the system, are smooth.
This is essential to guarantee the existence of solutions as the aforementioned counterexample illustrates.

In this paper, we focus on a solution concept entitled \emph{$\V$-parametrized BV solution}, which will be defined 
precisely in Section~\ref{sec:sol}.
This concept has two major advantages in comparison to other notions of solution.
First, it arises as limit in a vanishing viscosity approach and as such, solutions of this type 
will only provide jumps, if they are really necessary. Thus, solutions will be continuous in time as long as possible, 
which may be seen physically reasonable in many applications. 
Moreover, since the solution trajectory is parametrized by a curve parameter as the title indicates, 
$\V$-parametrized BV solutions enable a refined resolution of the jumps. 
As a result, a solution that was discontinuous as a function of time, 
becomes smooth in terms of the curve parameter.

Despite this gain of smoothness,  
the numerical approximation of solutions of this type is still a challenging issue.
This is mainly due to the non-uniqueness of $\V$-parametrized BV solutions, 
see e.g.\ the counterexample in \cite[Section~2.4]{MeySie20}. We underline that 
a lack of uniqueness is typical for rate-independent systems with non-convex energies 
and also concerns the other notions of solutions, see \cite{MRRIS, michael, sthomas} for various other counterexamples.
One can therefore not expect that an approximation will converge to a solution unless 
the energy is uniformly convex (at least locally around a given solution trajectory), which guarantees 
uniqueness of the solution (at least locally).
Nevertheless, in case of a uniform refinement of the interval for the curve parameter, 
discretization schemes have been developed that provide subsequences which converge weakly
to $\V$-parametrized BV solutions. 
Such a scheme has first been introduced and analyzed 
in \cite{EM06} in the finite dimensional setting, where $\ZZ = \R^n$.
It is known as \emph{local incremental minimization scheme}. 
In \cite{knees, Neg, KneShc21, siev21}, 
this approach has been transferred to infinite dimensions along with several modifications thereof. 
A combination of such a time stepping scheme with a spatial discretization is investigated in \cite{FElocmin}.
The aforementioned strong convergence in the uniformly convex case including convergence rates is 
established in \cite{MeySie20}. Let us finally mention that incremental minimization schemes are also 
employed to approximate other types of solutions, such as global energetic solutions, 
we exemplarily refer to \cite{bmr, bartels} and the references therein.
 
All aforementioned contributions employ a uniform discretization of the interval for the curve parameter. 
While the discretization of the physical time interval is intrinsically adaptive in context of local 
incremental minimization schemes, the curve parameter was so far only updated by a fixed step size $\tau$ 
in each iteration. One may however essentially distinguish three different regimes characterizing a 
$\V$-parametrized BV solution, that is \emph{rate-independent slip}, \emph{viscous jump}, and 
\emph{sticking}. In the last two regimes, one component of the solution curve $s\mapsto (t(s), z(s))$ is constant 
($t$ in a viscous jump and $z$ during sticking), which calls for coarser step sizes for the curve parameter 
especially in this parts of the curve. On the other hand, the switching points between two different regimes should 
be resolved properly, which may require local refinement.
There is thus a clear intuition that an adaptive choice of the step size for the curve parameter 
makes sense in the context of local incremental minimization schemes.
Nevertheless, to the best of our knowledge, this has not been investigated so far. 
With this work, we aim to develop such an adaptive algorithm, which at the same time provides 
the same convergence properties as the existing methods with uniform step size, i.e., weak 
convergence of subsequences to $\V$-parametrized BV solutions, which is all one can expect
in case of a non-convex energy as explained above.

\subsection{Outline of the Paper} 
Let us give a brief overview over the paper. 
After introducing the notation and our standing assumptions on the data in \eqref{eq:ris} in the rest 
of this introduction, we will precisely define $\V$-parametrized BV solutions and motivate 
their definition in more details in Section~\ref{sec:sol}.
Afterwards, in Section~\ref{sec:algo}, we present the basic version of our adaptive algorithm.
Section~\ref{sec:prelim} is then devoted to several results known from the uniform version of 
the algorithm with constant step size. 
If a proof is entirely analogous to the uniform case, we just state the associated result from the existing 
literature, otherwise we give a proof in the appendix.
In Section~\ref{sec:convRn}, we then investigate the convergence of the algorithm in case $\ZZ = \R^n$. 
As indicated above, we show in Theorem~\ref{thm:convRn} that subsequence of iterates converge weakly to 
$\V$-parametrized BV solutions, if the tolerance in the step size control tends to zero.
Unfortunately, an analogous result for the infinite dimensional case where $\ZZ \neq \R^n$ can only be proven 
under the additional assumption that the meshes generated by the adaptive algorithm are nested. 
In Section~\ref{sec:convinf}, we give an example for an algorithm ensuring this assumption and 
adapt the convergence result to this case in Theorem~\ref{konvergenz}.
Finally, we illustrate our theoretical findings with two numerical tests in Section~\ref{sec:numerics} 
showing that the adaptive step size control behaves as desired and refines the step size close to 
switching points.

\subsection{Notation}
If a normed linear space $X$ is continuously embedded in another space $Y$, we write $X\embed Y$. 
Compact embeddings are denoted by $X\embed^c Y$.
The dual pairing between $X$ and its dual $X^*$ is denoted by $\dual{\cdot}{\cdot}_{X^*, X}$, where 
the subscript is sometimes neglected, if there is no risk for ambiguity.
Throughout the paper, $c$ and $C$ denote positive generic constants.
Given a Lebesgue measurable set $M\subset \R$, we denote its Lebesgue measure by $|M|$.

\subsection{Standing Assumptions}\label{sec:assu}
We now introduce the precise assumptions on the data in \eqref{eq:ris}.

\subsubsection*{Spaces}
The spaces underlying \eqref{eq:ris} are $\XX$, $\VV$, $\WW$, and $\ZZ$, 
where $\ZZ$, $\VV$ are Hilbert and $\XX$, $\WW$ Banach spaces such that 
\begin{align*}
	\ZZ\embed^c \WW \embed \VV\embed \XX
\end{align*}
and $\ZZ$ is dense in $\VV$. 
Furthermore, $\V\in\LL(\VV,\VV^*)$ is a linear, coercive, and self-adjoint operator, i.e., 
there is a constant $\gamma > 0$ such that 
\begin{equation*}
    \dual{\V v}{v}_{\VV^*,\VV} \geq \gamma \|v\|^2_\VV 
    \quad \forall\, v\in \VV.
\end{equation*}
We sometimes equip $\VV$ with the equivalent norm $\|.\|_\V:\VV\to\R$ defined by
$\|v\|_\V=\langle \V v,v\rangle_{\VV^*,\VV}^{\frac{1}{2}}$.
Clearly, $\V$ is invertible and by $\eta \ni \VV^* \mapsto \dual{\eta}{\V^{-1}\eta}_{\VV^*,\VV}^{\frac{1}{2}}\in \R$
we obtain a norm on $\VV^*$, which we denote by $\|\eta\|_{\V^{-1}}$.

\subsubsection*{Dissipation}
For the dissipation $\RR:\XX\to [0,\infty)$, we assume
\begin{align}
	&\RR \mbox{~is~proper,~convex,~and~lower~semicontinuous}, \label{R1}\\
	&\RR \text{ is positive 1-homogeneous, i.e., }  
	\RR(\lambda v)=\lambda\RR(v) ~\forall \,v\in\XX,~\lambda>0, \label{R2}\\
 	&\exists \,\rho, R >0, \mbox{~such~that}~\rho\,\|v\|_\XX\leq\RR(v)\leq R\,\|v\|_\VV~\forall \,v\in\VV.  \label{R3}
\end{align}
In all what follows, we will consider $\RR$ as a mapping from $\ZZ$ to $\R$. 
Accordingly, its Fenchel conjugate and its convex subdifferential are considered as a mapping and 
a subset, respectively, in $\ZZ^*$.

\subsubsection*{Initial data} The initial state $z_0$ satisfies $z_0 \in \ZZ$ and 
$D_z \II(0,z_0) \in \VV^*$.

\subsubsection*{Energy}
The energy functional $\II:[0,T]\times\ZZ\to\R$ is of the form
\begin{equation}\label{eq:energydef}
	\II(t,z)=\frac{1}{2}\langle Az,z\rangle_{\ZZ^*,\ZZ} +\FF(z)-f(t,z),
\end{equation}
where $A\in\LL(\ZZ,\ZZ^*)$ is coercive and self-adjoint such that there is a constant $\alpha > 0$ so that 
\begin{equation*}
    \langle Az,z\rangle_{\ZZ^*,\ZZ}\geq\alpha\|z\|_\ZZ^2 \quad \forall \, z \in \ZZ.
\end{equation*}
The functional $\FF:\ZZ\to\R$ fulfills
\begin{align}
	&\FF\in C^2(\ZZ;\R),~\FF\geq 0 \label{F1}\\
	&D_z\FF\in C^1(\ZZ,\VV^*)~\mbox{and}~\|D_z^2\FF(z)v\|_{\VV^*}\leq C(1+\|z\|_\ZZ^q)\|v\|_\ZZ~\forall\, z,v\in\ZZ, \label{F2}
\end{align}
with constants $C>0$ and $q\geq1$. Moreover, we require the derivate $D_z\FF$ to be continuous in the 
following sense
\begin{equation}\label{eq:DFFweakcont}
	z_k\rightharpoonup z~\mbox{in}~\ZZ\implies 
	D_z\FF(z_k)\rightharpoonup D_z\FF(z)~\mbox{in}~\ZZ^* ~\mbox{for} ~k\to\infty.
\end{equation}
The time dependent part $f:[0,T]\times \ZZ\to \R$ is supposed to satisfy the following properties 
\begin{align}
	&f\in C^1([0,T]\times\ZZ;\R),\label{f1} \\
	&f(t,z)\leq c(\|z\|_\ZZ+1),~|\partial_tf(t,z)|\leq \mu(\|z\|_\ZZ+1) \label{f2} \\
	&|\langle D_zf(t_1,z_1)-D_zf(t_2,z_2),v\rangle_{\VV^*,\VV} |\leq \nu(|t_1-t_2|+\|z_1-z_2\|_\WW)\|v\|_\VV \label{f3}
\end{align}
for all $t,t_1,t_2\in[0,T],~z,z_1,z_2\in\ZZ$, and $v\in\VV$ with constants  $c,\mu,\nu>0$.
Moreover, we require that
\begin{align}
	&f(t,.) ~\mbox{is weakly continuous for all}~t\in[0,T], \label{eq:fweakinz}\\
	&\partial_t f(t_k,z_k)\to \partial_t f(t,z)~\mbox{holds for all sequences}~t_k\to t~\mbox{in}~\R ~\mbox{and}~z_k\rightharpoonup z~\mbox{in}~\ZZ~\mbox{for}~k\to\infty. \label{dtfkonv}
\end{align}

We end this section with several useful results that immediately follow from 
from the above assumptions on the energy.

\begin{lemma}\label{lem:fFFcont}
    The functionals $\FF$ and $f$ are weakly continuous in the following sense:
    \begin{align*}
	    z_k\rightharpoonup z \mbox{ in }\ZZ & \implies \FF(z_k)\to\FF(z),\\
        t_k\to t~\mbox{in}~\R,~z_k\rightharpoonup z~\mbox{in}~\ZZ  & \implies f(t_k,z_k)\to f(t,z),
    \end{align*}
    as $k\to \infty$.
\end{lemma}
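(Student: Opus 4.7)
The plan is to establish the two claims separately, exploiting the compact embedding $\ZZ\embed^c\WW\embed\VV$ together with the structural assumptions \eqref{F1}--\eqref{F2} on $\FF$ and \eqref{f1}--\eqref{f2}, \eqref{eq:fweakinz} on $f$. Throughout, I may invoke the fact that a weakly convergent sequence $z_k\rightharpoonup z$ in $\ZZ$ is bounded and, by the compact embedding, converges strongly to $z$ in $\WW$ and hence in $\VV$.

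For the statement on $\FF$, the first step is to observe that \eqref{F2} yields a polynomial growth bound for $D_z\FF$ in $\VV^*$. Indeed, applying the fundamental theorem of calculus (which is justified by $D_z\FF\in C^1(\ZZ,\VV^*)$) along the segment from $0$ to $z$ one obtains
\begin{equation*}
    \|D_z\FF(z)\|_{\VV^*}\le \|D_z\FF(0)\|_{\VV^*} + C\,(1+\|z\|_\ZZ^q)\,\|z\|_\ZZ
    \qquad \forall\, z\in\ZZ,
\end{equation*}
so $D_z\FF$ sends $\ZZ$-bounded sets into $\VV^*$-bounded sets. Then I would apply the fundamental theorem of calculus once more, now to $\FF$ along the segment from $z$ to $z_k$, to write
\begin{equation*}
    \FF(z_k) - \FF(z) = \int_0^1 \dual{D_z\FF(z+s(z_k-z))}{z_k-z}_{\VV^*,\VV}\,ds,
\end{equation*}
which gives the estimate
\begin{equation*}
    |\FF(z_k) - \FF(z)| \le \sup_{s\in[0,1]} \|D_z\FF(z+s(z_k-z))\|_{\VV^*}\;\|z_k-z\|_\VV .
\end{equation*}
Since the set $\{z+s(z_k-z):s\in[0,1],\,k\in\N\}$ is bounded in $\ZZ$, the supremum is finite by the growth bound, while $\|z_k-z\|_\VV\to 0$ follows from $\ZZ\embed^c\WW\embed\VV$. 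This yields the first claim.

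For the statement on $f$, I would split
\begin{equation*}
    f(t_k,z_k) - f(t,z) = \bigl[f(t_k,z_k) - f(t,z_k)\bigr] + \bigl[f(t,z_k) - f(t,z)\bigr].
\end{equation*}
The second bracket tends to zero by the weak-in-$z$ continuity hypothesis \eqref{eq:fweakinz}. For the first bracket, the $C^1$-regularity \eqref{f1} together with the bound $|\partial_t f(t,z)|\le\mu(\|z\|_\ZZ+1)$ from \eqref{f2} allow one to apply the mean value theorem in the $t$-variable, giving
\begin{equation*}
    |f(t_k,z_k) - f(t,z_k)| \le \mu\,(\|z_k\|_\ZZ+1)\,|t_k-t|,
\end{equation*}
which vanishes as $k\to\infty$ because $\{z_k\}$ is bounded in $\ZZ$ and $t_k\to t$.

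I do not anticipate a genuine obstacle; the only mildly delicate point is to convert the second-derivative growth in \eqref{F2} into a boundedness-on-bounded-sets property of $D_z\FF$ so that the mean-value representation of $\FF(z_k)-\FF(z)$ can be controlled in the $\VV^*$-$\VV$ duality. Once this is in place, both claims reduce to exploiting the compact embedding to trade weak convergence in $\ZZ$ for strong convergence in $\VV$.
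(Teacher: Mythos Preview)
Your proposal is correct and follows essentially the same route as the paper: for $\FF$ you use a mean-value/Taylor representation together with the bound \eqref{F2} and the compact embedding $\ZZ\embed^c\VV$, and for $f$ you split off the time increment via the mean value theorem and handle the remaining piece by \eqref{eq:fweakinz}. The only cosmetic differences are that the paper applies a second-order Taylor expansion for $\FF$ (bounding the remainder directly via \eqref{F2}) rather than first deriving a growth bound on $D_z\FF$, and for $f$ the paper cites \eqref{dtfkonv} instead of \eqref{f2} to control $\partial_t f$; your use of \eqref{f2} is in fact the more direct choice.
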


\begin{proof}
    In view of the required regularity of $\FF$, we can apply the mean value theorem, 
    which yields the existence of $\zeta_k\in(0,1)$ such that 
    \begin{align*}
	    \|\FF(z_k)-\FF(z)\|_\ZZ
	    & \leq |\langle D_z\FF(z),z_k-z\rangle_{\VV^*,\VV}|+\frac{1}{2}|D_z^2\FF(z+\zeta_k(z_k-z))(z_k-z)^2|\\
	    &\leq \|D_z\FF(z)\|_{\VV^*}\|z_k-z\|_\VV
	    +\frac{C}{2}(1+\|z+\zeta_k(z_k-z)\|_\ZZ^q)\|z_k-z\|_\ZZ\|z_k-z\|_\VV \\
	    &\to 0,~k\to\infty, 
    \end{align*}
    where we exploited \eqref{F2} and the weak convergence of $\{z_k\}$ in $\ZZ$, which gives the boundedness of 
    $\{z_k\}$ in $\ZZ$ and, due to the compactness of $\ZZ\hookrightarrow \VV$ by assumption, 
    the strong convergence $z_k \to z$ in $\VV$.
    
    For the second statement, we argue similarly. Again the mean value theorem implies 
    the existence of an intermediate value $\zeta_k$ such that 
    $f(t_k,z_k) = \partial_t f(\zeta_k,z_k)(t_k-t)+f(t,z_k)$. Then, \eqref{eq:fweakinz} and \eqref{dtfkonv} 
    immediately imply the assertion.
\end{proof}

\begin{lemma}
    The energy functional $\II$ satisfies $\II\in C^1([0,T]\times\ZZ;\R)$ and 
    \begin{align}
	    &t_k\to t \mbox{ in } \R,~z_k\rightharpoonup z \mbox{ in }\ZZ
	    \implies \II(t,z)\leq\liminf_{k\to\infty}\II(t_k,z_k), \label{iuhs}\\ 
	    &t_k\to t~\mbox{in}~\R,~ z_k\rightharpoonup z ~\mbox{in}~\ZZ
	    \implies D_z\II(t_k,z_k)\rightharpoonup D_z\II(t,z)~\mbox{in}~\ZZ^*~\mbox{for}~k\to\infty. \label{ikonv}
\end{align}
\end{lemma}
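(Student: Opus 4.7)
The proof proceeds by decomposing $\II$ according to \eqref{eq:energydef} into its three constituents (a quadratic form induced by $A$, the functional $\FF$, and the time dependent part $f$) and treating each separately, exploiting (a) the already established properties of $\FF$ and $f$ via Lemma~\ref{lem:fFFcont} for the lower semicontinuity part and (b) the compact embedding $\ZZ\embed^c\WW$ combined with \eqref{eq:DFFweakcont} and \eqref{f3} for the weak-$\ZZ^*$ convergence of the derivative.

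To establish $\II\in C^1([0,T]\times\ZZ;\R)$, I note that $z\mapsto \tfrac12\dual{Az}{z}_{\ZZ^*,\ZZ}$ is $C^\infty$ with derivative $Az$, that $\FF\in C^2(\ZZ;\R)$ by \eqref{F1}--\eqref{F2} (so in particular $C^1$), and that $f\in C^1([0,T]\times\ZZ;\R)$ by \eqref{f1}. Hence
\begin{equation*}
    D_z\II(t,z)=Az+D_z\FF(z)-D_zf(t,z)
\end{equation*}
is continuous on $[0,T]\times\ZZ$, and so is $\partial_t\II(t,z)=-\partial_tf(t,z)$.

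For \eqref{iuhs}, the map $z\mapsto\tfrac12\dual{Az}{z}$ is convex (since $A$ is self-adjoint and coercive) and continuous on $\ZZ$, hence weakly lower semicontinuous; therefore $\tfrac12\dual{Az}{z}\leq\liminf_{k\to\infty}\tfrac12\dual{Az_k}{z_k}$. By Lemma~\ref{lem:fFFcont}, $\FF(z_k)\to\FF(z)$ and $f(t_k,z_k)\to f(t,z)$, so adding these limits to the liminf inequality yields the claim.

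For \eqref{ikonv}, I argue term by term in the decomposition of $D_z\II$. Since $A\in\LL(\ZZ,\ZZ^*)$ is bounded and linear, it is weak-to-weak continuous, hence $Az_k\rightharpoonup Az$ in $\ZZ^*$. Assumption \eqref{eq:DFFweakcont} directly gives $D_z\FF(z_k)\rightharpoonup D_z\FF(z)$ in $\ZZ^*$. The only step requiring more care, and the one I expect to be the main (though mild) obstacle, is the convergence of $D_zf(t_k,z_k)$: by \eqref{f3},
\begin{equation*}
    \|D_zf(t_k,z_k)-D_zf(t,z)\|_{\VV^*}\leq\nu(|t_k-t|+\|z_k-z\|_\WW),
\end{equation*}
and since $\ZZ\embed^c\WW$, the weak convergence $z_k\rightharpoonup z$ in $\ZZ$ implies $z_k\to z$ strongly in $\WW$, yielding $D_zf(t_k,z_k)\to D_zf(t,z)$ strongly in $\VV^*$. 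Because $\ZZ$ is densely embedded in $\VV$, one has $\VV^*\embed\ZZ^*$, so this convergence also holds strongly (and in particular weakly) in $\ZZ^*$. Summing the three contributions gives \eqref{ikonv}.
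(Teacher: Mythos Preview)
Your proof is correct and follows essentially the same approach as the paper: the paper likewise derives $C^1$-regularity from the regularity of $f$ and $\FF$, obtains \eqref{iuhs} from Lemma~\ref{lem:fFFcont} together with the coercivity of $A$, and proves \eqref{ikonv} by combining the linearity of $A$, assumption \eqref{eq:DFFweakcont}, and the strong convergence of $D_zf(t_k,z_k)$ in $\VV^*$ (hence in $\ZZ^*$) via \eqref{f3} and the compact embedding $\ZZ\embed^c\WW$. Your version is simply more explicit about each step.
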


\begin{proof}
    The differentiability of $\II$ follows immediately from the required regularity of $f$ and $\FF$.
    The lower semicontinuity in \eqref{iuhs} is an immediate consequence of Lemma~\ref{lem:fFFcont} and
    the coercivity of $A$.
    To prove \eqref{ikonv}, observe that \eqref{f3} and the compactness of $\ZZ\embed \WW$ imply that 
    $D_z f(t_k, z_k)$ converge strongly in $\VV^*$ and thus even more in $\ZZ^*$. 
    Together with \eqref{eq:DFFweakcont} and the linearity of $A$, this gives the assertion.
\end{proof}

Moreover, exploiting the coercivity of $A$ together with (\ref{F1}) and (\ref{f2}) gives the lower bound estimate
\begin{equation}
	\II(t,z)\geq \frac{\alpha}{2}\|z\|_\ZZ^2-c(\|z\|_\ZZ+1)\geq \|z\|_\ZZ-\tilde{c} \quad \forall\, t\in [0,T],~z\in\ZZ, \label{ibesch}
\end{equation}
where $\tilde{c}=\frac{(c+1)^2}{2\alpha}+c$. Hence, combining this and (\ref{f2}) results in
\begin{align*}
	|\partial_t\II(t,z)|\leq \mu(\|z\|_\ZZ+1)\leq \mu(\II(t,z)+\beta),
\end{align*}
where $\beta=\tilde{c}+1$. Now integrating over $(s,t)$ and $(t,s)$, respectively, and using Gronwall's lemma leads to

\begin{lemma}\label{lem:gronwall}
    For all $z\in\ZZ$ and  and all $s,t\in[0,T]$, there holds
    \begin{align}
	    \II(t,z)+\beta & \leq(\II(s,z)+\beta)\exp(\mu|t-s|), \label{gr1}\\ 
	    |\partial_t \II(t,z)| & \leq \mu (\II(s,z)+\beta)\exp(\mu|t-s|).\label{gr2}
    \end{align}
\end{lemma}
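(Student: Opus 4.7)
The plan is straightforward: the inequality $|\partial_t\II(t,z)|\le \mu(\II(t,z)+\beta)$ already stated just before the lemma is a differential inequality for $\varphi(t):=\II(t,z)+\beta$ at fixed $z\in\ZZ$, and the assertions follow by integration plus the standard Gronwall lemma. I would first observe that $\varphi\in C^1([0,T];\R)$ by the regularity $\II\in C^1([0,T]\times\ZZ;\R)$ established above, and that $\varphi$ is strictly positive since \eqref{ibesch} together with the choice $\beta=\tilde c+1$ gives $\varphi(t)\ge \|z\|_\ZZ+1\ge 1$. These observations let me apply Gronwall's lemma without sign issues.

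Next I would split into the two cases $t\ge s$ and $t<s$. For $t\ge s$, writing $\varphi(t)=\varphi(s)+\int_s^t\varphi'(r)\,dr$ and using $|\varphi'(r)|\le\mu\varphi(r)$ yields
\begin{equation*}
\varphi(t)\le \varphi(s)+\mu\int_s^t\varphi(r)\,dr,
\end{equation*}
and the classical (integral form of) Gronwall's lemma gives $\varphi(t)\le \varphi(s)\exp(\mu(t-s))$. For $t<s$, I would rewrite the integral as $\varphi(t)=\varphi(s)-\int_t^s\varphi'(r)\,dr\le \varphi(s)+\mu\int_t^s\varphi(r)\,dr$ and apply Gronwall on the interval $[t,s]$ (or equivalently reverse time by setting $\psi(r):=\varphi(s-r)$), which produces the symmetric estimate $\varphi(t)\le\varphi(s)\exp(\mu(s-t))$. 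Combining both cases gives \eqref{gr1}.

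Then \eqref{gr2} follows immediately: starting again from $|\partial_t\II(t,z)|\le\mu(\II(t,z)+\beta)=\mu\varphi(t)$ and plugging in the bound \eqref{gr1} just proved yields
\begin{equation*}
|\partial_t\II(t,z)|\le\mu\varphi(t)\le\mu(\II(s,z)+\beta)\exp(\mu|t-s|),
\end{equation*}
as required.

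There is no real obstacle here; the estimate leading up to the lemma has done all the work. The only point that merits a line of justification is the handling of the case $t<s$, i.e.\ that Gronwall's lemma also runs backwards in time with the absolute value $|t-s|$ in the exponent, which is resolved by the time-reversal argument indicated above. Positivity of $\varphi$ (needed for a clean application of Gronwall) is cheap thanks to the lower bound \eqref{ibesch} and the choice of $\beta$.
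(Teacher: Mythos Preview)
Your proposal is correct and follows essentially the same approach as the paper: the paper's proof is just the one-line sketch ``integrating over $(s,t)$ and $(t,s)$, respectively, and using Gronwall's lemma,'' and you have spelled out precisely these details. The positivity remark is harmless but not strictly needed for the integral Gronwall inequality.
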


\begin{example}\label{ex:bspsystem}
    Let us give an example for a rate-independent system fulfilling the above assumptions. 
    For this purpose, let $\Omega\subset \R^d$, $d \in \N$, be a bounded Lipschitz-domain in the sense of 
    \cite[Def.~1.2.1.2]{Grisvard11}. Then we choose for the spaces 
    \begin{equation*}
        \ZZ := H^1_0(\Omega), \quad \WW = \VV := L^2(\Omega), \quad \XX := L^1(\Omega), 
    \end{equation*}
    for the dissipation functional
    \begin{equation*}
        \RR(z) := \|z\|_{L^1(\Omega)},
    \end{equation*}
    and for the energy
    \begin{equation}\label{eq:loadex}
        A := -\laplace, \quad \FF(z) := \int_\Omega g(z)(x) \,\d x,
        \quad f(t,z) := \dual{\ell(t)}{z}_{\VV^*,\VV},
    \end{equation}
    where $g: L^p(\Omega) \to L^2(\Omega)$, $p < 2d/(d-2)$,  
    is a Nemyzkii operator that is twice continuously Fr\'echet-differentiable
    and $\ell : [0,T] \to \VV^*$ is continuously Fr\'echet-differentiable, too. Then, it is easily seen by means 
    of Sobolev embeddings that the example fits into the above setting,
    provided that $\frac{\d}{\d t}\ell$ is Lipschitz continuous and $g$ is non-negative and satisfies 
    \begin{equation}\label{eq:gsecond}
        \| g''(z) v \|_{L^2(\Omega)} \leq C(1 + \|z\|_{L^p(\Omega)}^q) \|v\|_{L^p(\Omega)}.
    \end{equation}
    We will come back to an example of this type in our numerical experiments in Section~\ref{sec:numerics} below.
\end{example}


\section{Solution Concepts in Brief}\label{sec:sol}

Before we present our adaptive time stepping scheme, let us introduce the notion 
of solution of \eqref{eq:ris} underlying this scheme and give a brief overview of the various solution 
concepts for \eqref{eq:ris} without claiming to be exhaustive. For a more detailed survey of 
the various notions of solutions to rate independent systems, 
we refer to \cite{MRRIS} and the references therein.

The most natural solution concept for \eqref{eq:ris} is probably the notion of a 
\emph{differential solution}. Here, one seeks for a function $z \in W^{1,1}(0,T;\ZZ)$ with $z(0) = z_0$ 
satisfying the differential inclusion in \eqref{eq:ris} almost everywhere. It however turns out that, 
if the energy is not uniformly convex, the existence of such a solution can in general not be expected, 
as e.g.\ the counterexample in \cite[Example~1.8.1]{MRRIS} illustrates. 
For this reason, Alexander Mielke and his co-authors came up with the meanwhile classical concept of 
\emph{global energetic solutions}, see \cite{MT99, MTL02, MT04}. 
This concept involves a global stability condition together with an energy balance identity, 
and existence of solutions of this type can be shown under rather mild assumptions on dissipation and energy, 
in particular without assuming the energy functional to be convex, we refer to \cite[Thm.~2.1.6]{MRRIS}.
However, caused by the global stability condition, 
global energetic solutions have the tendency to jump as early as possible to global minimizers in the 
energy landscape and in this way may cross energy barriers, which can be seen unphysical in applications.

A solution concept that overcomes this drawback is the concept of 
\emph{parametrized  balanced viscosity (BV) solutions}, 
which also forms the basis for our adaptive time stepping scheme.
To the best of our knowledge, it was first introduced in \cite{EM06}, but has by now been analyzed by 
various authors in multiple aspects, we only refer to \cite{MRS12} and the reference therein. 
As the denomination indicates, we focus on BV solutions in parametrized form, where physical time and 
state variable are given as functions of a parameter $s$. 
For our particular setting, the precise definition of a parametrized BV solution reads as follows:

\begin{definition}[$\V$-parametrized BV solution] \label{param}
	We call a tuple $(\hat t, \hat z)$ $\V$-parametrized BV solution of the rate-independent system \eqref{eq:ris},
	if there exists $S\geq T$ such that the following is fulfilled:
	\begin{itemize}
		\item Regularity:
			\begin{align}
				\hat t\in W^{1,\infty}(0,S),\quad \hat z\in W^{1,\infty}(0,S;\VV)\cap L^\infty(0,S;\ZZ), \label{reg}
			\end{align}
		\item Initial and end time condition:
			\begin{align}
				\hat z(0)=z_0,~\hat t(0)=0,~\hat t(S)=T, \label{start}
			\end{align}
		\item Complementary condition:
		 	\begin{subequations}
				\begin{align}
					& \hat t'(s)\geq 0,~\hat t'(s)+\|\hat z'(s)\|_\V\leq 1  & & \text{f.a.a.}~s\in(0,S),\label{eq:komp1}\\
					& \hat t'(s)\, \dist_{\VV^*}\{-D_z\II(\hat t(s),\hat z(s)),\partial\RR(0)\}=0
					& & \text{f.a.a.}~s\in(0,S), \label{komp}
				\end{align}		 	
		 	\end{subequations}
			where $\dist_{\VV^*}\{\,\cdot\,,\partial\RR(0)\} : \ZZ^* \to [0,\infty]$ is defined by
			\begin{equation}\label{eq:distdef}
			    \dist_{\VV^*}\{\eta,\partial\RR(0)\}
			    := \inf\{\|w\|_{\V^{-1}}: w \in \VV^* \cap (\partial\RR(0) - \eta)\}
			\end{equation}
			(with the usual convention that $\inf \emptyset = \infty$),
		\item Energy equality:
			\begin{equation}
				\begin{aligned}
					\II(\hat t(s),\hat z(s))+\int_0^s \RR(\hat z'(\sigma)) + \|\hat z'(\sigma)\|_\V 
					\,\dist_{\VV^*}\{-D_z\II(\hat t(\sigma),\hat z(\sigma)),\partial\RR(0)\} \, d\sigma 
					\qquad\qquad &\\[-1ex]
					= \II(0,z_0)+\int_0^s \partial_t\II(\hat t(\sigma),\hat z(\sigma))\hat t'(\sigma)d\sigma 
					\quad \forall s\in[0,S]. & \label{energie}
				\end{aligned}
			\end{equation}
	\end{itemize}
	We call a $\V$-parametrized solution non-degenerate, if there is a constant $\delta > 0$ such that 
	$\hat t'(s)+\|\hat z'(s)\|_\V \geq \delta$ a.e.\ in $(0,S)$. If $\delta =1$, then the solution is called 
	normalized $\V$-parametrized BV solution.
\end{definition}

Some words concerning this definition are in order. The interpretation of Definition~\ref{param} is probably best 
understood, if one introduces the function $\lambda : [0,S] \to [0, \infty]$ by 
\begin{equation}\label{eq:lambdadef}
\begin{aligned}
    \lambda(s) := 
    \begin{cases}
        \frac{1}{\|\hat z'(s)\|_{\V}}\dist_{\VV^*}\{-D_z\II(\hat t(s),\hat z(s)),\partial\RR(0)\}, &
        \text{if } \hat z'(s)\neq 0, \\ 
         0, & \text{else} 
    \end{cases}        
\end{aligned}
\end{equation}
and applies the chain rule to 
\eqref{energie} to obtain the pointwise relation 
\begin{equation}\label{eq:viscouseqlimit}
    0 \in \partial\RR(\hat z'(s)) + \lambda(s) \hat z'(s) + D_z\II(\hat t(s),\hat z(s)),
\end{equation}
cf.\ \cite{MRS12}, 
which is \eqref{eq:ris} enriched with the additional viscous term $\lambda(s) \hat z'(s)$.
We observe that, thanks to the complementarity relation in \eqref{komp}, this additional term only appears, 
if $\hat t'(s) = 0$, i.e., if the physical time stands still, which corresponds to a discontinuous jump of the system. 
In case of a jump, the above equation thus describes the 
viscous transition of the state variable $\hat z$ through the complement of the set of local stability
$\{(t, z)\in \R\times \ZZ : - D_z \II(t, z) \in \partial \RR(0)\}$.
Of course, one may re-parametrize the curve $s\mapsto (\hat t(s), \hat z(s))$ such that 
a parametrized BV solution is intrinsically non-unique. Here, we have chosen a parametrization by arc length
w.r.t.\ the $\V$-norm, another parametrization that is often used involves the 
so-called vanishing viscosity contact potential. One may also define BV solutions in the physical time regime 
without parametrization, leading to a modified energy balance.
For more details on (parametrized) BV solutions, we refer to \cite{MRS12} and the references therein.

\begin{remark}[{\cite[Lemma~5.6]{KRZ}}, {\cite[Lemma~2.4.6]{michael}}]\label{rem:energyid}
    It is to be noted that, provided a chain rule can be applied, an energy inequality is sufficient 
    for \eqref{energie} to hold. To be more precise, if
	$(\hat t,\hat z)\in W^{1,\infty}(0,S)\times W^{1,1}(0,S;\ZZ)$ 
	with $\dist_{\VV^*}\{-D_z\II(\hat t(s),\hat z(s)),\partial\RR(0)\}< \infty$ f.a.a.\ $s\in(0,S)$, 
	then the inequality
    \begin{equation}\label{eq:enerineq}
	\begin{aligned}
		0\leq&~\II(\hat t(s_2),\hat z(s_2))-\II(\hat t(s_{1}),\hat z(s_{1}))
		-\int_{s_{1}}^{s_{2}} \partial_t\II(\hat t(\sigma),\hat z(\sigma))\hat t'(\sigma)d\sigma \\
 		&+\int_{s_{1}}^{s_{2}}\RR(\hat z'(\sigma))
 		+\|\hat z'(\sigma)\|_\V\, \dist_{\VV^*}\{-D_z\II(\hat t(\sigma),\hat z(\sigma)),\partial\RR(0)\}d\sigma 
	\end{aligned}
    \end{equation}
	is automatically fulfilled for all $s_1,s_2\in[0,S]$ with $s_1\leq s_2$.
	The required chain rule for the energy $\II$ is applicable here thanks to our 
	standing assumptions, cf., e.g., \cite[Lemma~A.2.5]{michael}.	
\end{remark}

While the existence of parametrized BV solutions is frequently established by 
a vanishing viscosity approach (as the name indicates), it is also possible to prove their existence by means of 
tailored time stepping schemes and a limit analysis for time step size tending to zero.
The corresponding schemes are known as \emph{local incremental minimization} or 
\emph{local stationary scheme}.
They have been analyzed in \cite{EM06} for the finite dimensional case
and in \cite{knees, FElocmin, siev21, KneShc21} for the infinite dimensional setting. 
Moreover, as the numerical experiments in \cite{FElocmin, siev21} demonstrate, 
these schemes can also be realized in practice and used for numerical computations.
So far however, all these schemes employ a uniform step size for the curve parameter $s$. 
In the following, we present a method that chooses an adaptive step size, but still can be shown 
to converge to a $\V$-parametrized BV solution in the sense of Definition~\ref{param}.


\section{An Adaptive Incremental Stationary Scheme}\label{sec:algo}

For the construction of our adaptive algorithm, we need the following
\begin{definition}
    By $I_{\tau_k}:\ZZ\to\R$, we denote the indicator functional of $B_\V(0,\tau_k)$, i.e.,
    \begin{equation*}
        I_{\tau_k}(z)=\begin{cases} 0,~&\mbox{if}~ \|z\|_\V\leq\tau_k\\ \infty, &\mbox{else}.\end{cases}
    \end{equation*}        
\end{definition}

With $I_{\tau_k}$ at hand, the algorithm then reads as follows:

\begin{algorithm}[Adaptive Incremental Stationary Scheme]\label{alg:adaptlocmin}
\ 
\begin{algorithmic}[1]
	\STATE\label{it:initial} Set $t_0=0,~s_0=0,~k=1$ and choose $\tol>0,~\tau_k>0$.
	\WHILE {$t_{k-1}<T$}\label{it:abbruch}
        	\STATE\label{it:stat} Calculate a solution $z_k$ of
        		\begin{equation}
		 0\in\partial(\RR+I_{\tau_k})(z_k-z_{k-1})+D_z\II(t_{k-1},z_k), \label{alg1}
		\end{equation}
		which, moreover, fulfills
		\begin{equation}
		\II(t_{k-1},z_k)+\RR(z_k-z_{k-1})\leq\II(t_{k-1},z_{k-1}). \label{alg2}
		\end{equation}
		\STATE\label{it:update} Time update: 
        \begin{equation*}
            t_k=t_{k-1}+\tau_k-\|z_k-z_{k-1}\|_\V, \quad s_k=\sum_{i=1}^{k}\tau_i.
        \end{equation*}        		
        \STATE \label{it:4} Define for $s\in [s_{k-1},s_k)$ the affine and constant interpolants 
        \begin{equation*}
        \begin{aligned}
            \hat{z}(s) &=z_{k-1}+\frac{s-s_{k-1}}{\tau_k}(z_k-z_{k-1}), & &
            \hat{t}(s)=t_{k-1}+\frac{s-s_{k-1}}{\tau_k}(t_k-t_{k-1}), \\
            \bar{z}(s) &=z_k, & &
            \underline{t}(s)=t_{k-1}.        
        \end{aligned}
        \end{equation*}
	\STATE\label{it:5} Compute the integrals
        \begin{align}
            &I_1^k =\int_{s_{k-1}}^{s_k}
            \hat{t}'(s)\dist_{\VV^*}\{-D_z\II(\hat{t}(s),\hat{z}(s)),\partial\RR(0)\}ds\label{i1}\\
            &I_2^k =\int_{s_{k-1}}^{s_k} 
            \begin{aligned}[t]
                \Big[ & \langle D_z\II(\hat{t}(s),\hat{z}(s))-D_z\II(\underline{t}(s),\bar{z}(s)),
                \hat{z}'(s)\rangle_{\ZZ^*,\ZZ}\\[-0.5ex]
                & + \|\hat{z}'(s)\|_\V 
                \begin{aligned}[t]
                    \Big( & \dist_{\VV^*}\{-D_z\II(\hat{t}(s),\hat{z}(s)),\partial\RR(0)\} \\[-0.5ex]
                    & -\dist_{\VV^*}\{-D_z\II(\underline{t}(s),\bar{z}(s)),\partial\RR(0)\} \Big)\Big] ds .
                \end{aligned}                                
            \end{aligned}\label{i2}
		\end{align}
	\IF{$I_1^k < \tol$ and $I_2^k < \tol$}\label{it:refine} 
		\STATE Set $k=k+1$.
	    	\IF {$I_1^k<\frac{\tol}{2}$ and $I_2^k<\frac{\tol}{2}$}
                		\STATE\label{it:10} Set $\tau_{k+1}=2\,\tau_k$.
        		\ENDIF
	\ELSE
        		\STATE\label{it:reduc} Set $\tau_k=\frac{\tau_k}{2}$ and \textbf{go to} Step~\ref{it:stat}.
	\ENDIF\label{it:13}
	
\ENDWHILE
\end{algorithmic}
\end{algorithm}

It is to be noted that the above algorithm coincides with the local stationary scheme from 
\cite{michael, siev21} except for the adaptive choice of the step size $\tau_k$ in 
Steps~\ref{it:5}--\ref{it:13}. As mentioned before, in contrast to Algorithm~\ref{alg:adaptlocmin}, 
the step size in \cite{michael, siev21} is fixed throughout the whole iteration.

Let us first show that each iteration of the above algorithm is well posed.

\begin{lemma}
    Given $t_k \geq 0$ and $z_{k-1}\in \ZZ$, there exists at least one $z_k\in \ZZ$ fulfilling 
    \eqref{alg1} and \eqref{alg2}.
\end{lemma}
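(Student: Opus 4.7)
The natural approach is to obtain $z_k$ as a minimizer of the constrained problem
\begin{equation*}
    \min_{z\in \ZZ}\;\II(t_{k-1},z)+\RR(z-z_{k-1})\quad\text{s.t.}\quad \|z-z_{k-1}\|_\V\leq \tau_k,
\end{equation*}
which, setting $\psi:=\RR+I_{\tau_k}$, can be rewritten as an unconstrained minimization of $z\mapsto \II(t_{k-1},z)+\psi(z-z_{k-1})$ over $\ZZ$. Since $z=z_{k-1}$ is admissible with $\psi(0)=0$, the infimum is bounded above by $\II(t_{k-1},z_{k-1})$, and therefore any minimizer automatically complies with the energy inequality \eqref{alg2} by direct comparison.

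To establish existence of a minimizer, I would apply the direct method of the calculus of variations. For a minimizing sequence $\{z_n\}\subset \ZZ$, the values $\II(t_{k-1},z_n)$ are bounded above and hence \eqref{ibesch} yields boundedness of $\{z_n\}$ in $\ZZ$. Reflexivity provides a subsequence $z_n\rightharpoonup z_k$ in $\ZZ$, and the compact embedding $\ZZ\embed^c \WW\embed \VV$ upgrades this to strong convergence in $\VV$, so that the ball constraint passes to the limit and $z_k$ is admissible. The weak lower semicontinuity of $\II(t_{k-1},\cdot)$ from \eqref{iuhs}, together with the continuity of $\RR$ on $\VV$ ensured by \eqref{R3}, then shows that $z_k$ indeed attains the infimum. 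The optimality condition \eqref{alg1} is obtained by a standard convex-combination argument: for arbitrary $z\in \ZZ$ with $\psi(z-z_{k-1})<\infty$ and $\lambda\in(0,1]$, convexity of $\psi$ applied to $z_\lambda:=(1-\lambda)z_k+\lambda z$ combined with optimality of $z_k$ gives
\begin{equation*}
    \lambda\,\psi(z_k-z_{k-1})\;\leq\;\II(t_{k-1},z_\lambda)-\II(t_{k-1},z_k)+\lambda\,\psi(z-z_{k-1}),
\end{equation*}
and dividing by $\lambda$ and letting $\lambda\to 0^+$, using Fr\'echet differentiability of $\II(t_{k-1},\cdot):\ZZ\to\R$, yields the subdifferential characterization $-D_z\II(t_{k-1},z_k)\in \partial\psi(z_k-z_{k-1})$, which is precisely \eqref{alg1}.

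The main obstacle is that the admissibility constraint $\|z-z_{k-1}\|_\V\leq\tau_k$ is a ball with respect to the weaker $\VV$-norm and therefore does not by itself provide compactness in the energy space $\ZZ$; this is exactly where the $\ZZ$-coercivity of the quadratic part of the energy recorded in \eqref{ibesch} is essential to make the direct method work.
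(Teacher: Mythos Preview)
Your proposal is correct and follows essentially the same route as the paper: obtain $z_k$ as a global minimizer of $\II(t_{k-1},\cdot)+\RR(\cdot-z_{k-1})+I_{\tau_k}(\cdot-z_{k-1})$ via the direct method (using \eqref{ibesch} for $\ZZ$-boundedness of minimizing sequences and \eqref{iuhs} for weak lower semicontinuity), and then read off \eqref{alg1} as the first-order optimality condition and \eqref{alg2} from feasibility of $z_{k-1}$. The paper is more terse---it simply asserts that \eqref{alg1} is the necessary optimality condition---whereas you spell out the convex-combination argument explicitly and also make the helpful observation that the $\VV$-ball constraint alone does not yield $\ZZ$-compactness.
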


\begin{proof}
    Owing to the differentiability of $\II$ and the convexity of $\RR$, \eqref{alg1} is nothing else than the 
    necessary optimality condition of 
    \begin{equation}\label{min}
    	z_k\in \mbox{argmin}\{\II(t_{k-1},z)+\RR(z-z_{k-1}): \; z\in\ZZ, \,\|z-z_{k-1}\|_\V\leq\tau\}.  
    \end{equation}
    Now, as $\ZZ$ is reflexive, $\II(t_{k-1}, \cdot)$ is weakly lower semicontinuous by 
    \eqref{iuhs} and the same holds for $\RR$, since it is convex and continuous, 
    and $\II$ is radially unbounded by \eqref{ibesch}, the direct method gives the existence 
    of a solution to \eqref{min} and thus to \eqref{alg1}. As it is a global minimum and $z_{k-1}$ is feasible, 
    \eqref{alg2} follows immediately.
\end{proof}

\begin{remark}
    The original incremental minimization scheme from \cite{EM06} is based on \eqref{min} instead of 
    \eqref{alg1}--\eqref{alg2}. However, any optimization algorithm can hardly be guaranteed to 
    solve a non-convex problem up to (global) optimality. What can be shown instead is convergence to 
    stationary points, at least under suitable assumptions. From an algorithmic point of view, it therefore 
    makes more sense to require the stationarity condition in \eqref{alg1} instead of solving \eqref{min}. 
    The second condition in \eqref{alg2} is not restrictive in this context, provided that 
    one employs a descent method, e.g., via backtracking or other line search methods. 
\end{remark}

\begin{remark}\label{rem:time}
    It is to be noted that, due to the indicator functional $I_{\tau_k}$ involved in \eqref{alg1}, 
    there always holds $\|z_k - z_{k-1}\|_\V \leq \tau_k$ such that $t_k \geq t_{k-1}$ by the time update
    in Step~\ref{it:update}, i.e., the algorithm does not go backwards in time.
\end{remark}

As we will see below, the integrals in Step~\ref{it:5} are well defined, see 
Lemma~\ref{lem:I1I2} below. 
Therefore, each iteration of the algorithm is well defined, too. 
Moreover, the overall iteration is well posed in the sense that the end time $T$ is reached after 
finitely many iterations, as we will see in Proposition~\ref{endtime} below. 
Let us shortly comment on the update of the step size in Steps~\ref{it:5}--\ref{it:13}.
As we shall see in the next section, 
the integrals $I_1^k$, $I_2^k$ calculated in Step~\ref{it:5} measure the residuum that arises, 
if one inserts the affine interpolants from Step~\ref{it:4} in the complementarity relation in \eqref{komp} 
and in the energy identity in \eqref{energie}, see~Lemma~\ref{fehler} below.
The aim of the algorithm is thus to drive this residuum below a given tolerance $\tol$. 
However, given a sequence $\{\tol\}_{n\in\N}$ tending to zero with asscoiated affine interpolants 
$\{(\hat t_n, \hat z_n)\}_{n\in \N}$ resulting from Algorithm~\ref{alg:adaptlocmin}, there is in general no hope
that $(\hat t_n, \hat z_n)$ will converge (strongly) to a $\V$-parametrized BV solution, 
although the residuum tends to zero. This is due to the non-uniqueness of $\V$-parametrized BV solutions and
is even the case 
for a uniform refinement of the step size, as demonstrated by the counterexample in \cite[Section~2.4]{MeySie20}.
Nevertheless, for a uniform step size, one can show that subsequences converge weakly to 
a $\V$-parametrized BV solution and, as we will see in the following, the same holds true for our 
adaptive algorithm. 


\section{Preliminaries and Known Results}\label{sec:prelim}

For the final proof of our convergence results in Sections~\ref{sec:convRn} and \ref{sec:convinf}, 
we need several auxiliary results that are established in the following. 
We point out that most of the proofs are very similar to the case with a uniform step size 
as discussed in \cite{knees, FElocmin}. Therefore, we omit a proof, when it is entirely analogous, 
or postpone it to the appendix.

Moreover, it is important to note that all proofs of this section do \emph{neither} depend on the 
choice of the step size \emph{nor} on the tolerance, i.e., if we choose an update rule for the steps size, 
that differs from the one in Steps~\ref{it:refine}--\ref{it:13}, the results of this section still apply. 
In fact, an inspection of the proofs in this section shows that 
the arguments are only based on the construction of $z_k$ and $t_k$ 
in Step~\ref{it:stat} and \ref{it:update}, see also Remark~\ref{rem:tau_egal} below.

We start with a reformulation of \eqref{alg1}, which follows from classical results of convex analysis.

\begin{lemma}[{\cite[Proposition~2.2]{knees}, \cite[Lemma~3.1]{FElocmin}, \cite[Lemma~3.2.1]{michael}}]
\label{lem:stat}
    Given $\tau_k> 0$, $t_{k-1} \geq 0$, and $z_k\in \ZZ$, the differential inclusion in \eqref{alg1} 
    is equivalent to the existence of a Lagrange multiplier $\lambda_k\geq0$ such that
	\begin{align}
		&\lambda_k(\|z_k-z_{k-1}\|_\V-\tau_k)=0 \label{eig1} \\
		&\tau_k \dist_{\VV^*}\{-D_z\II(t_{k-1},z_k),\partial\RR(0)\}=\lambda_k\|z_k-z_{k-1}\|_\V^2  \label{eig2} \\
		&\RR(z_k-z_{k-1})+\tau_k\dist_{\VV^*}\{-D_z\II(t_{k-1},z_k),\partial\RR(0)\}
		=\langle -D_z\II(t_{k-1},z_k), z_k-z_{k-1}\rangle_{\ZZ^*,\ZZ}  \label{eig3} \\
		&\RR(v)\geq -\langle \lambda_k \V(z_k-z_{k-1})+D_z\II(t_{k-1},z_k),v \rangle_{\ZZ^*,\ZZ} 
		\quad \forall \, v\in \ZZ. \label{eig4}
	\end{align}
\end{lemma}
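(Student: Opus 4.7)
The plan is to deduce \eqref{eig1}--\eqref{eig4} from \eqref{alg1} by invoking the Moreau--Rockafellar sum rule combined with the two structural features of the problem, namely the positive 1-homogeneity of $\RR$ and the fact that $I_{\tau_k}$ is the indicator of a $\V$-ball, and then to verify that the four conditions are conversely sufficient for \eqref{alg1}. First I would rewrite \eqref{alg1} as
\[
-D_z\II(t_{k-1},z_k)\in\partial\RR(z_k-z_{k-1})+\partial I_{\tau_k}(z_k-z_{k-1}),
\]
where the sum rule applies because $\RR$ is finite on all of $\ZZ$. The normal cone of $B_\V(0,\tau_k)$ at a point $v$ equals $\{0\}$ for $\|v\|_\V<\tau_k$ and $\{\lambda\V v:\lambda\ge 0\}$ for $\|v\|_\V=\tau_k$, so the inclusion produces a multiplier $\lambda_k\ge 0$ satisfying \eqref{eig1} together with an element $\xi_k\in\partial\RR(z_k-z_{k-1})$ such that
\[
\xi_k+\lambda_k\V(z_k-z_{k-1})+D_z\II(t_{k-1},z_k)=0.
\]
Since $\RR$ is positively 1-homogeneous, its Fenchel conjugate coincides with the indicator of $\partial\RR(0)$, so $\xi_k\in\partial\RR(z_k-z_{k-1})$ is equivalent to the combination $\xi_k\in\partial\RR(0)$ and $\langle\xi_k,z_k-z_{k-1}\rangle_{\ZZ^*,\ZZ}=\RR(z_k-z_{k-1})$. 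Membership in $\partial\RR(0)$ together with $\RR(0)=0$ gives $\RR(v)\ge\langle\xi_k,v\rangle_{\ZZ^*,\ZZ}$ for every $v\in\ZZ$, which after substituting the expression for $\xi_k$ from the identity above is exactly \eqref{eig4}.

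The main obstacle is \eqref{eig2}. Observe that the identity above rewrites as $-\lambda_k\V(z_k-z_{k-1})=\xi_k+D_z\II(t_{k-1},z_k)$ with $\xi_k\in\partial\RR(0)$, so $-\lambda_k\V(z_k-z_{k-1})$ is an admissible competitor in the infimum \eqref{eq:distdef} defining $\dist_{\VV^*}\{-D_z\II(t_{k-1},z_k),\partial\RR(0)\}$; using $\|\V v\|_{\V^{-1}}=\|v\|_\V$, its $\V^{-1}$-norm equals $\lambda_k\|z_k-z_{k-1}\|_\V$, which yields the upper bound. For the matching lower bound I would argue by a short projection argument: for any other competitor $\tilde\xi+D_z\II(t_{k-1},z_k)$ with $\tilde\xi\in\partial\RR(0)$, the Fenchel equality $\langle\xi_k,z_k-z_{k-1}\rangle=\RR(z_k-z_{k-1})$ and the subgradient inequality $\RR(z_k-z_{k-1})\ge\langle\tilde\xi,z_k-z_{k-1}\rangle$ at $0\in\partial\RR(0)$ combine to give $\langle\xi_k-\tilde\xi,z_k-z_{k-1}\rangle\ge 0$, which is precisely the angle condition characterising $-\lambda_k\V(z_k-z_{k-1})$ as the $\V^{-1}$-nearest point to $0$ in the closed convex set $\partial\RR(0)+D_z\II(t_{k-1},z_k)$. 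Multiplying the resulting equality $\dist_{\VV^*}\{-D_z\II(t_{k-1},z_k),\partial\RR(0)\}=\lambda_k\|z_k-z_{k-1}\|_\V$ by $\tau_k$ and using \eqref{eig1} (so that $\tau_k\lambda_k=\lambda_k\|z_k-z_{k-1}\|_\V$ in either case $\lambda_k=0$ or $\|z_k-z_{k-1}\|_\V=\tau_k$) delivers \eqref{eig2}.

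Testing the identity above against $z_k-z_{k-1}$ and combining the Fenchel equality with \eqref{eig2} then yields \eqref{eig3} after a one-line rearrangement. For the converse implication, \eqref{eig4} states that $\xi_k:=-\lambda_k\V(z_k-z_{k-1})-D_z\II(t_{k-1},z_k)$ satisfies $\RR(v)\ge\langle\xi_k,v\rangle_{\ZZ^*,\ZZ}$ for all $v$, hence $\xi_k\in\partial\RR(0)$; combining \eqref{eig2} and \eqref{eig3} yields the Fenchel equality $\langle\xi_k,z_k-z_{k-1}\rangle=\RR(z_k-z_{k-1})$, so that $\xi_k\in\partial\RR(z_k-z_{k-1})$; finally, \eqref{eig1} and $\lambda_k\ge 0$ place $\lambda_k\V(z_k-z_{k-1})$ in $\partial I_{\tau_k}(z_k-z_{k-1})$, and summing the two subgradients recovers \eqref{alg1}.
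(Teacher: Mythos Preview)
Your proof is correct and follows precisely the route the paper has in mind: the lemma is not proved in the paper but is attributed to ``classical results of convex analysis,'' and your argument---sum rule, normal cone of the $\V$-ball, Fenchel duality for positively $1$-homogeneous functionals, and a Hilbert-projection characterisation of the distance---is exactly that. One minor remark: in the converse direction you should note that the feasibility $\|z_k-z_{k-1}\|_\V\le\tau_k$ is implicitly assumed (otherwise $\partial I_{\tau_k}(z_k-z_{k-1})=\emptyset$ and \eqref{alg1} is vacuous), since \eqref{eig1} alone does not force it when $\lambda_k=0$.
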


The next lemma has been proven in various publications for the case of a uniform step size.
Its proof is only based on the condition \eqref{alg2} and on the standing assumptions on
$\RR$ and $\II$, in particular Lemma~\ref{lem:gronwall},
and therefore readily carries over to an adaptive choice of $\tau_k$ as in Algorithm~\ref{alg:adaptlocmin}.

\begin{lemma}[{\cite[Theorem~2.1.5]{MRRIS}, \cite[Proposition~2.1]{knees}, \cite[Lemma~3.2]{FElocmin}, 
\cite[Lemma~3.2.4]{michael}}]
\label{irbeschr}
    For all $k\in \N$, the iterates $(t_k, z_k)$ of Algorithm~\ref{alg:adaptlocmin} satisfy
	\begin{align*}
		\II(t_k,z_k)+\sum_{i=1}^{k}\RR(z_i-z_{i-1})\leq (\beta+\II(0,z_0))\exp(\mu T).
	\end{align*}
\end{lemma}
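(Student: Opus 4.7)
The plan is to combine the descent property \eqref{alg2} with the Gronwall estimate \eqref{gr1} from Lemma~\ref{lem:gronwall} into a single recursive inequality, and then iterate it using a discrete Gronwall-type argument. The key observations that make the analysis work for the adaptive step size are already built in: by Remark~\ref{rem:time} the time updates satisfy $t_k \geq t_{k-1}$, and as long as the algorithm is still running we have $t_k \leq T$ (so the total $\sum_i (t_i-t_{i-1}) = t_k \leq T$ is uniformly bounded). Neither of these facts depends on how $\tau_k$ is chosen.

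First I would apply \eqref{gr1} with $s := t_{k-1}$ and $t := t_k$, using that $t_k - t_{k-1} \geq 0$, to bound
\begin{equation*}
    \II(t_k, z_k) + \beta \leq \bigl(\II(t_{k-1}, z_k) + \beta\bigr)\exp\bigl(\mu (t_k - t_{k-1})\bigr).
\end{equation*}
Then I would insert the descent estimate \eqref{alg2}, namely $\II(t_{k-1}, z_k) \leq \II(t_{k-1}, z_{k-1}) - \RR(z_k - z_{k-1})$, and abbreviate $E_k := \II(t_k, z_k) + \beta$, $a_k := t_k - t_{k-1}$, to arrive at
\begin{equation*}
    E_k \leq \bigl(E_{k-1} - \RR(z_k - z_{k-1})\bigr)\exp(\mu a_k).
\end{equation*}

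Multiplying by $\exp(-\mu t_k)$ and rearranging gives the telescoping inequality
\begin{equation*}
    E_k \exp(-\mu t_k) + \RR(z_k - z_{k-1})\exp(-\mu t_{k-1}) \leq E_{k-1}\exp(-\mu t_{k-1}),
\end{equation*}
which I would sum from $i = 1$ to $k$ to obtain
\begin{equation*}
    E_k \exp(-\mu t_k) + \sum_{i=1}^{k} \RR(z_i - z_{i-1}) \exp(-\mu t_{i-1}) \leq E_0.
\end{equation*}
Multiplying through by $\exp(\mu t_k)$, using $t_k \leq T$ together with $\exp(\mu(t_k - t_{i-1})) \geq 1$ for every $i$, and finally dropping the nonnegative $\beta$ on the left gives exactly the claimed bound $\II(t_k,z_k)+\sum_{i=1}^{k}\RR(z_i-z_{i-1})\leq (\beta+\II(0,z_0))\exp(\mu T)$.

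There is essentially no obstacle here: the argument is the classical a priori estimate for incremental minimization schemes, and the only point where one needs to be a little careful is to verify that the discrete Gronwall step really does not require a uniform step size $\tau_k$, only monotonicity of $t_k$ and the final-time bound $t_k \leq T$, both of which are guaranteed by Step~\ref{it:update} and the stopping condition in Step~\ref{it:abbruch} of Algorithm~\ref{alg:adaptlocmin}. This is why the proofs cited in the literature carry over verbatim, and I would simply refer to them after recording the above chain of inequalities.
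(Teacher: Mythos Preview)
Your argument is correct and is precisely the standard discrete Gronwall estimate the paper has in mind when it writes that the proof ``is only based on the condition \eqref{alg2} and \ldots\ Lemma~\ref{lem:gronwall}'' and cites \cite{MRRIS,knees,FElocmin,michael}; the paper gives no proof of its own beyond this reference. One small imprecision: the very last iterate $t_{N_{\tol}}$ may overshoot $T$ (the while-loop tests $t_{k-1}<T$, not $t_k\leq T$), so your claim ``$t_k\leq T$ as long as the algorithm is running'' is off by one step---but this is a cosmetic issue shared by the paper's own statement and is trivially absorbed into the constant.
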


If we combine the estimate \eqref{ibesch} with the above lemma and the non-negativity of $\RR$, we obtain
\begin{equation*}
    \|z_k\|_\ZZ\leq\II(t_k,z_k)+\tilde{c}\leq (\beta+\II(0,z_0))\exp(\mu T)+\tilde{c},
\end{equation*}
i.e., we have the following

\begin{corollary}\label{cor:zkbound}
	For every choice of step sizes $\tau_k>0$, the iterates of Algorithm~\ref{alg:adaptlocmin} satisfy
	\begin{align}
		\|z_k\|_\ZZ \leq C \quad \forall\, k\in\N \label{zbeschr}
	\end{align}
	with a constant $C>0$ independent of $\tau_k$.
\end{corollary}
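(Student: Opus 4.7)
The statement is essentially a direct consequence of the energy bound established in Lemma~\ref{irbeschr} combined with the coercivity estimate~\eqref{ibesch}, and indeed the paragraph immediately preceding the corollary already outlines the argument. The plan is therefore to simply assemble these two ingredients carefully and to track the constants to verify the claimed independence of $\tau_k$.

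First, I would invoke the lower bound \eqref{ibesch}, which gives for each $k\in\N$ the inequality
\begin{equation*}
    \|z_k\|_\ZZ \leq \II(t_k,z_k) + \tilde c,
\end{equation*}
with $\tilde c = (c+1)^2/(2\alpha) + c$ depending only on the data in the standing assumptions and not on the step sizes produced by Algorithm~\ref{alg:adaptlocmin}. Then I would drop the non-negative dissipation sum $\sum_{i=1}^k \RR(z_i - z_{i-1}) \geq 0$ from the estimate in Lemma~\ref{irbeschr} to conclude
\begin{equation*}
    \II(t_k,z_k) \leq (\beta + \II(0,z_0))\exp(\mu T).
\end{equation*}
Adding $\tilde c$ on both sides and combining with the previous display yields the desired bound with
\begin{equation*}
    C := (\beta + \II(0,z_0))\exp(\mu T) + \tilde c.
\end{equation*}

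Finally, I would point out that the constants $\alpha, c, \mu, \beta, \tilde c$ and $\II(0, z_0)$ entering $C$ all originate from the standing assumptions on $\RR$, $\II$ and the initial data, and that the energy bound in Lemma~\ref{irbeschr} itself was established without reference to a specific choice of the step sizes. Hence $C$ is independent of $\{\tau_k\}$, which is precisely the claim. There is no real obstacle here; the only point worth stating explicitly is that Lemma~\ref{irbeschr}, although originally formulated for a uniform step size, depends exclusively on \eqref{alg2} and on the Gronwall-type estimates of Lemma~\ref{lem:gronwall}, so it applies verbatim to the iterates produced with the adaptive update from Steps~\ref{it:refine}--\ref{it:13}.
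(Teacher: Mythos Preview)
Your proposal is correct and follows exactly the same approach as the paper: combine the coercivity estimate~\eqref{ibesch} with Lemma~\ref{irbeschr} (dropping the non-negative dissipation sum) to obtain the explicit constant $C=(\beta+\II(0,z_0))\exp(\mu T)+\tilde c$, and observe that none of the ingredients depends on the step sizes. The paper's own argument is precisely the one-line display in the paragraph preceding the corollary, which you have reproduced with a bit more detail.
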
 

Using our assumptions \eqref{F2} and \eqref{f3} on the components $\FF$ and $f$ of the energy functional 
in combination with the lower bound of the dissipation functional from \eqref{R3} and Ehrling's lemma, 
one finds the following

\begin{lemma}[{\cite[Lemma 3.5]{FElocmin}, \cite[Lemma 3.2.10]{michael}}]
    Let $\epsilon > 0$ be arbitrary. Then, for all $r>0$, 
    there exists a constant $C_{\epsilon,r}>0$  such that
	\begin{align}
		|\langle D_z\FF(z_1)-D_z\FF(z_2),z_1-z_2\rangle_{\VV^*,\VV}|
		\leq \epsilon\, \|z_1-z_2\|_\ZZ^2+C_{\epsilon,r}\,\RR(z_1-z_2)\|z_1-z_2\|_\VV \label{estF}
	\end{align}
	holds true for all $z_1,z_2\in B_\ZZ(0,r)$. Moreover, there is a constant $c_\epsilon>0$ such that
    \begin{equation}\label{estf}
	\begin{aligned}
	 &|\langle D_zf(t_1,z_1)-D_zf(t_2,z_2),v\rangle_{\VV^*,\VV}| \\\
		& \qquad\qquad \leq \nu\, |t_1-t_2|\,\|v\|_\VV
		+\epsilon\,\|z_1-z_2\|_\ZZ\,\|v\|_\ZZ+c_\epsilon\,\RR(z_1-z_2)\|v\|_\VV 
	\end{aligned}    
    \end{equation}
	holds for all $t_1,t_2\in [0,T]$, $z_1,z_2, v\in\ZZ$.
\end{lemma}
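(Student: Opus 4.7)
The plan is to prove the two estimates separately. Both rely on Ehrling's lemma applied to subchains of the embedding $\ZZ\embed^c\WW\embed\VV\embed\XX$, combined with the two-sided bound on $\RR$ from \eqref{R3}.

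For the first estimate, I would apply the mean value theorem to $D_z\FF$ along the segment $z_2+s(z_1-z_2)$, $s\in[0,1]$. Together with \eqref{F2} and the bound $\|z_2+s(z_1-z_2)\|_\ZZ\le r$ valid on $B_\ZZ(0,r)$, this gives
\[
|\langle D_z\FF(z_1)-D_z\FF(z_2),z_1-z_2\rangle_{\VV^*,\VV}|\le C(1+r^q)\,\|z_1-z_2\|_\ZZ\,\|z_1-z_2\|_\VV.
\]
Young's inequality then splits this product into $\tfrac{\epsilon}{2}\|z_1-z_2\|_\ZZ^2$ plus a constant multiple of $\|z_1-z_2\|_\VV^2$. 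The essential step is to rewrite this residual $\VV$-square as the mixed term $\RR(z_1-z_2)\|z_1-z_2\|_\VV$. For this I would invoke Ehrling's lemma on the triple $\ZZ\embed^c\VV\embed\XX$, giving $\|z_1-z_2\|_\VV\le\delta\|z_1-z_2\|_\ZZ+C_\delta\|z_1-z_2\|_\XX$; squaring and then invoking the lower bound $\rho\,\|\cdot\|_\XX\le\RR(\cdot)$ from \eqref{R3}, one obtains a term proportional to $\RR(z_1-z_2)^2$. The \emph{upper} bound $\RR(v)\le R\|v\|_\VV$ from \eqref{R3} then permits the reshaping $\RR(z_1-z_2)^2\le R\,\RR(z_1-z_2)\|z_1-z_2\|_\VV$, which is exactly what is needed. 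Choosing $\delta$ small enough to absorb the spurious $\|z_1-z_2\|_\ZZ^2$ contribution into $\tfrac{\epsilon}{2}\|z_1-z_2\|_\ZZ^2$ closes the estimate and fixes the constant $C_{\epsilon,r}$.

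For the second estimate, which is considerably more direct, I would start from \eqref{f3} to obtain
\[
|\langle D_zf(t_1,z_1)-D_zf(t_2,z_2),v\rangle_{\VV^*,\VV}|\le\nu\,|t_1-t_2|\,\|v\|_\VV+\nu\,\|z_1-z_2\|_\WW\,\|v\|_\VV.
\]
The $\WW$-norm of $z_1-z_2$ is then controlled via Ehrling on $\ZZ\embed^c\WW\embed\XX$ (using $\WW\embed\VV\embed\XX$), which yields $\|z_1-z_2\|_\WW\le\delta\|z_1-z_2\|_\ZZ+(C_\delta/\rho)\,\RR(z_1-z_2)$ after one more application of $\rho\|\cdot\|_\XX\le\RR(\cdot)$. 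Invoking the continuous embedding $\|v\|_\VV\le C\|v\|_\ZZ$ in the resulting mixed term $\|z_1-z_2\|_\ZZ\|v\|_\VV$ and tuning $\delta$ so that $\nu\delta C=\epsilon$ produces the claimed estimate with $c_\epsilon$ proportional to $C_\delta/\rho$.

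The main obstacle is the bookkeeping in the first estimate: because $\ZZ$ is the smallest space in the chain, Ehrling cannot be applied directly to $\|z_1-z_2\|_\ZZ$, so one cannot simply peel off the $\ZZ$-quadratic term at the outset. The detour via Young's inequality followed by the combined use of both bounds in \eqref{R3}---the lower bound to convert an $\XX$-square into $\RR^2$, and the upper bound to reshape $\RR^2$ into $\RR\cdot\|\cdot\|_\VV$---is the non-routine ingredient and differentiates the first estimate from the second.
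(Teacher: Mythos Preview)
Your argument is correct and matches the paper's approach, which is only given as a one-line sketch invoking \eqref{F2}, \eqref{f3}, the lower bound in \eqref{R3}, and Ehrling's lemma. One small remark: your claim that the \emph{upper} bound $\RR(v)\le R\|v\|_\VV$ in \eqref{R3} is the ``non-routine ingredient'' is slightly overstated. It is a convenient shortcut for reshaping $\RR(z_1-z_2)^2$ into $\RR(z_1-z_2)\|z_1-z_2\|_\VV$, but it can be avoided: after Young, write $\|z_1-z_2\|_\VV^2 = \|z_1-z_2\|_\VV\cdot\|z_1-z_2\|_\VV$, apply Ehrling to only one factor to get $\delta\|z_1-z_2\|_\ZZ\|z_1-z_2\|_\VV + (C_\delta/\rho)\,\RR(z_1-z_2)\|z_1-z_2\|_\VV$, and then absorb the first term via a second Young inequality. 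This uses only the lower bound, in line with the paper's sketch. Either route is fine.
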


\begin{lemma}\label{lem:sumz}
    There exists a constant $C>0$ such that, for every choice of the step size $\tau_k$, $k\in \N$, there holds 
	\begin{equation}
		\lambda_{k+1}\|z_{k+1}-z_k\|_\V + \sum_{i=0}^{k}\|z_{i+1}-z_{i}\|_\ZZ \leq C \label{sumz} 
	\end{equation}
	for all $k\geq 0$.
\end{lemma}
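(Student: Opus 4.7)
The plan is to derive a telescoping per-step inequality that delivers both bounds in \eqref{sumz} simultaneously. Writing $u_{k+1} := z_{k+1}-z_k$, I first test the subdifferential inclusion \eqref{alg1} at step $k+1$ with $v=u_{k+1}$ and exploit the 1-homogeneity of $\RR$, which turns the subgradient pairing into $\RR(u_{k+1})$ itself, yielding the identity
\begin{equation*}
    \RR(u_{k+1}) + \lambda_{k+1}\|u_{k+1}\|_\V^2 + \langle D_z\II(t_k,z_{k+1}),u_{k+1}\rangle = 0.
\end{equation*}
I then test the variational inequality \eqref{eig4} from step $k$ with the same direction $v=u_{k+1}$ to obtain a lower bound for $\RR(u_{k+1})$; subtracting the two relations the $\RR$-contributions cancel and, after rearrangement, I arrive at
\begin{equation*}
    \langle D_z\II(t_k,z_{k+1})-D_z\II(t_{k-1},z_k),u_{k+1}\rangle + \lambda_{k+1}\|u_{k+1}\|_\V^2 \leq \lambda_k\langle u_k,u_{k+1}\rangle_\V.
\end{equation*}

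Next I expand the gradient difference, isolating $\langle Au_{k+1},u_{k+1}\rangle = \|u_{k+1}\|_A^2 \geq \alpha\|u_{k+1}\|_\ZZ^2$ and treating the remaining $\FF$- and $f$-contributions with \eqref{estF} and \eqref{estf} at $\epsilon=\alpha/4$. The $\epsilon\|u_{k+1}\|_\ZZ^2$ terms are absorbed by half of $\|u_{k+1}\|_A^2$, while the rest is bounded by $C\bigl(\RR(u_{k+1})+|t_k-t_{k-1}|\bigr)\|u_{k+1}\|_\VV$, and by the equivalence of $\|\cdot\|_\V$ and $\|\cdot\|_\VV$ this in turn is controlled by the same expression with $\|u_{k+1}\|_\V$ in place of $\|u_{k+1}\|_\VV$. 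Setting $d_k := \lambda_k\|u_k\|_\V$, which by \eqref{eig1}--\eqref{eig2} equals $\dist_{\VV^*}\{-D_z\II(t_{k-1},z_k),\partial\RR(0)\}$, and using $\lambda_k\langle u_k,u_{k+1}\rangle_\V \leq d_k\|u_{k+1}\|_\V$ (Cauchy--Schwarz in the $\V$-inner product), I divide through by $\|u_{k+1}\|_\V$ to obtain the key recursion
\begin{equation*}
    d_{k+1}-d_k + \tfrac{\alpha}{2}\,\frac{\|u_{k+1}\|_\ZZ^2}{\|u_{k+1}\|_\V} \leq C\bigl(\RR(u_{k+1})+|t_k-t_{k-1}|\bigr), \qquad k\geq 1,
\end{equation*}
with the trivial case $u_{k+1}=0$ (which forces $d_{k+1}=0$) handled separately.

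For the initial index $k=0$, where \eqref{eig4} from a ``previous'' step is unavailable, I run the same argument but estimate $\langle D_z\II(0,z_0),u_1\rangle$ directly via the standing assumption $D_z\II(0,z_0)\in\VV^*$, obtaining an analogous inequality in which a fixed data-dependent constant plays the role of $d_0$. Summing the recursion from $k=0$ to $K$ telescopes the $d_{k+1}-d_k$ terms; the right-hand side is then controlled by Lemma~\ref{irbeschr}, which guarantees $\sum_k\RR(u_{k+1})\leq C$, and by the obvious $\sum_k|t_k-t_{k-1}|\leq T$. This gives $d_{K+1} + \sum_{k=0}^K \|u_{k+1}\|_\ZZ^2/\|u_{k+1}\|_\V \leq C$ uniformly in $K$ and in the step sizes.

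The conclusion is then immediate: since $\ZZ\embed\VV$ and $\|\cdot\|_\V \sim \|\cdot\|_\VV$, there is a constant $C_0>0$ with $\|u_{k+1}\|_\V \leq C_0\|u_{k+1}\|_\ZZ$, so that $\|u_{k+1}\|_\ZZ^2/\|u_{k+1}\|_\V \geq C_0^{-1}\|u_{k+1}\|_\ZZ$, and \eqref{sumz} follows from $d_{K+1}=\lambda_{K+1}\|u_{K+1}\|_\V$. The main obstacle I expect is the derivation of the per-step inequality in a form where, after absorbing the error terms from \eqref{estF}--\eqref{estf}, the coefficient of $\|u_{k+1}\|_\ZZ^2$ remains strictly positive; this is precisely where the coercivity of $A$ and the fine structure of those estimates come into play. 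Once that step is in place the remainder is essentially telescoping and bookkeeping.
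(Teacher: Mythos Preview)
Your proposal is correct and follows essentially the same route as the paper: combine the identity obtained from \eqref{alg1} at step $k{+}1$ (the paper derives it via \eqref{eig2}--\eqref{eig3}) with the variational inequality \eqref{eig4} from step $k$, expand $D_z\II$ and use the coercivity of $A$ together with \eqref{estF}--\eqref{estf} at $\epsilon=\alpha/4$, divide by $\|u_{k+1}\|_\V$, telescope, and treat the initial step $k=0$ separately via $D_z\II(0,z_0)\in\VV^*$. The only cosmetic difference is that the paper applies the embedding $\ZZ\hookrightarrow\VV$ immediately after dividing, whereas you carry the quotient $\|u_{k+1}\|_\ZZ^2/\|u_{k+1}\|_\V$ through the summation and convert at the end; also note that your bound $\sum_k|t_k-t_{k-1}|\leq T$ is really $\sum_k(t_k-t_{k-1})=t_k$, which the paper controls via the termination criterion.
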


\begin{proof}
    The proof is similar to  the ones of \cite[Proposition~3.6]{FElocmin} and \cite[Proposition~2.3]{knees}, 
    but, since we need some parts of the proof for later reference and, in particular, the estimate of 
    $\sum_{i=0}^{k}\|z_{i+1}-z_{i}\|_\ZZ$ can be seen as the basis for the overall convergence analysis, 
    we present the arguments in detail for convenience of the reader.

	Let $k\in \N$ be arbitrary. We start with inserting $v=z_{k+1}-z_k$ in (\ref{eig4}) in order to obtain
	\begin{equation}
			\RR(z_{k+1}-z_k)\geq -\langle \lambda_k\V(z_k-z_{k-1}),z_{k+1}-z_k\rangle_{\VV^*,\VV}
			- \langle D_z\II(t_{k-1},z_k),z_{k+1}-z_k\rangle_{\ZZ^*,\ZZ} \label{w1}
	\end{equation}
	Rewriting the statements (\ref{eig2}) and (\ref{eig3}) for $k=k+1$ and inserting them into each other yields
	\begin{align}
		\RR(z_{k+1}-z_k)+\lambda_{k+1}\|z_{k+1}-z_k\|_\V^2
		=\langle -D_z\II(t_k,z_{k+1}),z_{k+1}-z_k\rangle_{\ZZ^*,\ZZ}. \label{w2}
	\end{align}
	Combining (\ref{w1}) and (\ref{w2}) leads to
	\begin{align*}
		0\geq&~ \lambda_{k+1}\|z_{k+1}-z_k\|_\V^2-\langle \lambda_k\V(z_k-z_{k-1}),z_{k+1}-z_k\rangle_{\VV^*,\VV}\\
		&+\langle D_z\II(t_k,z_{k+1})-D_z\II(t_{k-1},z_k),z_{k+1}-z_k\rangle_{\ZZ^*,\ZZ}.
	\end{align*}
	By inserting the definition of $\II$ from \eqref{eq:energydef} and using the coercivity of $A$ by assumption, 
	we obtain the inequality 
	\begin{equation} \label{w3}
		\begin{aligned}
            & \lambda_{k+1}\|z_{k+1}-z_k\|_\V^2-\lambda_k\|z_k-z_{k-1}\|_\V\|z_{k+1}-z_k\|_\V
            +\alpha \|z_{k+1}-z_k\|_\ZZ^2\\		
            & \qquad\qquad\leq 		
            \langle D_z\FF(z_k)-D_z\FF(z_{k+1}),z_{k+1}-z_k\rangle_{\VV^*,\VV} \\
            & \qquad\qquad\qquad\qquad + \langle D_zf(t_{k-1},z_k)-D_zf(t_k,z_{k+1}),z_{k+1}-z_k\rangle_{\VV^*,\VV}
		\end{aligned}
	\end{equation}
    Now we apply \eqref{estF} and \eqref{estf}, both with $\epsilon=\frac{\alpha}{4}$, to estimate the right hand side.
	Note at this point that, due to \eqref{zbeschr}, the iterates $(z_i)_{i\in\N}$ are bounded in $\ZZ$ 
	so that the requirements for (\ref{estF}) are satisfied. 
    This leads to the estimate
	\begin{equation}
		\begin{aligned}
			&\lambda_{k+1}\|z_{k+1}-z_k\|_\V^2-\lambda_k\|z_k-z_{k-1}\|_\V\|z_{k+1}-z_k\|_\V
			+\frac{\alpha}{2} \|z_{k+1}-z_k\|_\ZZ^2\\
			& \qquad \leq (C_{\alpha/4} + c_{\alpha/4})\RR(z_{k+1}-z_k)\|z_{k+1}-z_k\|_\VV
			+\nu\,(t_k-t_{k-1})\|z_{k+1}-z_k\|_\VV. \label{q1}
		\end{aligned}
	\end{equation}
    By dividing this by $\|z_{k+1}-z_k\|_\V$ and using the continuous embedding $\ZZ \embed \VV$, we end up with 
	\begin{equation*}
		\lambda_{k+1}\|z_{k+1}-z_k\|_\V-\lambda_k\|z_k-z_{k-1}\|_\V+c\|z_{k+1}-z_k\|_\ZZ
		\leq C(\RR(z_{k+1}-z_k)+t_k-t_{k-1}).
	\end{equation*}
    Now let us sum up this inequality with respect to $k$ in order to conclude
	\begin{equation}
		\lambda_{k+1}\|z_{k+1}-z_k\|_\V+c\sum_{i=1}^k\|z_{i+1}-z_i\|_\ZZ\leq \lambda_1\|z_1-z_0\|_\V+C\Big(t_k+\sum_{i=1}^k\RR(z_{i+1}-z_i)\Big). \label{w6}
	\end{equation}
	Next for estimating the term $\lambda_1\|z_1-z_0\|_\V$ we consider (\ref{w2}) for $k=0$, which reads
	\begin{equation*}
		\RR(z_{1}-z_0)+\lambda_{1}\|z_{1}-z_0\|_\V^2=\langle -D_z\II(0,z_{1}),z_{1}-z_0\rangle_{\ZZ^*,\ZZ},
	\end{equation*}
    which, in view of the non-negativity of $\RR$, in turn yields
	\begin{equation*}
		\langle D_z\II(0,z_1)-D_z\II(0,z_0),z_1-z_0\rangle_{\ZZ^*,\ZZ}+\lambda_{1}\|z_{1}-z_0\|_\V^2 
		\leq \langle -D_z\II(0,z_0),z_1-z_0\rangle_{\ZZ^*,\ZZ}.
	\end{equation*}
	For the first part on the left-hand side, we argue in the same way as above, i.e., 
	we split the derivative of the energy into its individuals parts, rearrange terms and use the coercivity of $A$, 
	as well as \eqref{estF} and \eqref{estf} with $\epsilon = \alpha/4$ as above in order to obtain
	\begin{equation}
        \frac{\alpha}{2}\|z_1-z_0\|_\ZZ^2+\lambda_1\|z_1-z_0\|_\V^2
		\leq C(\RR(z_1-z_0)+\|D_z\II(0,z_0)\|_{\VV^*})\|z_1-z_0\|_\VV. \label{q3}
	\end{equation}
	By dividing by $\|z_1-z_0\|_\V$ and exploiting the continuous embedding $\VV\hookrightarrow\ZZ$, 
	it follows that
	\begin{equation}\label{eq:lambda1}
		c\|z_1-z_0\|_\ZZ+\lambda_1\|z_1-z_0\|_\V\leq  C(\RR(z_1-z_0)+\|D_z\II(0,z_0)\|_{\VV^*}).
	\end{equation}
	Now inserting this into \eqref{w6} yields
	\begin{equation*}
		\lambda_{k+1}\|z_{k+1}-z_k\|_\V+c\sum_{i=0}^k\|z_{i+1}-z_i\|_\ZZ
		\leq C\Big(t_k+\sum_{i=0}^k\RR(z_{i+1}-z_i)+\|D_z\II(0,z_0)\|_{\VV^*}\Big)	.
	\end{equation*}
	From this, in combination with Lemma~\ref{irbeschr}, the lower bound (\ref{ibesch}), 
	the termination criterion of the algorithm, and our standing assumptions on the initial state, we deduce the claim.
	Note that the case $k=0$ is covered by \eqref{eq:lambda1} by the same arguments.
\end{proof}	
	
\begin{lemma}\label{lem:taumax}
    There exists a constant $\tau_{\max} > 0$, independent of the chosen tolerance, 
    such that $\max_{k\in \N} \tau_k \leq \tau_{\max}$.
\end{lemma}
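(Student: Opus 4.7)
The approach is to derive an a priori bound on the sum $s_{K-1} = \sum_{k=1}^{K-1}\tau_k$ from the loop termination criterion together with Lemma~\ref{lem:sumz}, and then to bound the final step $\tau_K$ via the doubling rule in Step~\ref{it:10}. The key observation is that the time update in Step~\ref{it:update} yields the identity
\begin{equation*}
    \tau_k = (t_k - t_{k-1}) + \|z_k - z_{k-1}\|_\V,
\end{equation*}
so that summation over $k$ produces a telescoping sum in the physical time plus a summable $\V$-norm series.

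Concretely, I would first apply Lemma~\ref{lem:sumz} together with the continuous embedding $\ZZ \embed \VV$ to obtain a tolerance-independent constant $C_\V>0$ with $\sum_{k=1}^{K}\|z_k - z_{k-1}\|_\V \leq C_\V$. Let $K$ denote the final (accepted) iteration index, which is finite by Proposition~\ref{endtime}. Since the while-loop condition in Step~\ref{it:abbruch} ensures $t_{K-1} < T$, summing the identity above over $k = 1,\ldots, K-1$ gives
\begin{equation*}
    s_{K-1} = t_{K-1} + \sum_{k=1}^{K-1}\|z_k - z_{k-1}\|_\V < T + C_\V.
\end{equation*}
Because each $\tau_k$ is a positive summand of $s_{K-1}$, it follows that $\tau_k < T + C_\V$ for every $k \leq K-1$. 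For the remaining step $\tau_K$, I would use that the doubling rule in Step~\ref{it:10} sets the initial trial value of $\tau_K$ to at most $2\tau_{K-1}$, and that Step~\ref{it:reduc} can only decrease this value upon rejection, so the accepted $\tau_K$ satisfies $\tau_K \leq 2\tau_{K-1} \leq 2(T+C_\V)$ whenever $K\geq 2$. Combined with the user-chosen initial value $\tau_1$, this yields the bound $\tau_{\max} \leq \max\{\tau_1, 2(T + C_\V)\}$, which is independent of the tolerance.

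The main conceptual hurdle is to realize that, in isolation, nothing in the step-size control prevents the doubling rule from inflating $\tau_k$ indefinitely; individual doublings are unconditional beyond the residual check. The resolution is to observe that the total arc length $s_{K-1}$ is controlled a priori by the fixed end time $T$ and by the summability estimate of Lemma~\ref{lem:sumz}, so that the sum of all accepted $\tau_k$'s is bounded and each individual term is therefore bounded as well. Notably, this argument bypasses any analysis of the residual integrals $I_1^k$, $I_2^k$ and works regardless of the prescribed tolerance.
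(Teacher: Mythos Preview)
Your approach is essentially identical to the paper's: both use the time-update identity $\tau_k = (t_k - t_{k-1}) + \|z_k - z_{k-1}\|_\V$, sum it, invoke the termination criterion $t_{k-1} < T$ together with Lemma~\ref{lem:sumz} to bound $s_{k-1}$, and then handle the final step via the doubling rule in Step~\ref{it:10}.

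There is, however, one logical blemish: you invoke Proposition~\ref{endtime} to assert that the final index $K$ is finite, but this is circular. In the paper's development, Proposition~\ref{endtime} relies on Lemma~\ref{lem:I1I2}, whose proof in turn uses the distance bounds of Lemma~\ref{lem:distbound}, and the latter already requires Lemma~\ref{lem:taumax} (see the estimate \eqref{eq:distkk}, where boundedness of $\tau_k$ is used). The paper avoids this by \emph{not} assuming termination: it simply observes that \emph{every} iteration except possibly the last satisfies $t_k \leq T$, regardless of whether a last iteration exists, and then remarks parenthetically that termination will be shown later. Your argument works verbatim once you drop the appeal to Proposition~\ref{endtime} and phrase it this way; the finiteness of $K$ is not actually needed anywhere in your estimate.
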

	
\begin{proof}
    According to the termination criterion and the time update in Step~\ref{it:update} of the algorithm, all but the last 
    iterations fulfill
    \begin{equation}\label{eq:taumaxest}
        T\geq t_k = t_0+\sum_{i=1}^{k}\tau_i-\sum_{i=1}^{k}\|z_i-z_{i-1}\|_\V ,
    \end{equation}
    which, together with \eqref{sumz}, yields the assertion for all $k$ except for the last iteration (if there is one at all, 
    which we will see in fact in Proposition~\ref{endtime} below). 
    Since the last step size can just be twice as large as the second to last one by Step~\ref{it:10},
    the bound carries over to all step sizes.
\end{proof}

\begin{lemma}\label{lem:distbound}
    There is a constant $C>0$ such that, for all $k\in \N$, there holds
    \begin{align}
    	\dist_{\VV^*}\{-D_z\II(t_{k-1},z_{k}),\partial\RR(0)\}&\leq C, \label{distbeschr}\\
		\dist_{\VV^*}\{-D_z\II(\hat{t}(s),\hat{z}(s)),\partial\RR(0)\}&\leq C \quad \forall s\in[s_{k-1},s_{k}).\label{distbeschr2}
    \end{align}    
\end{lemma}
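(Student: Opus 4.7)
The plan is to split the argument according to the two claimed estimates. For \eqref{distbeschr}, the key is that the optimality condition \eqref{eig4} furnishes an explicit witness for the infimum in \eqref{eq:distdef}. Indeed, \eqref{eig4} says that $\xi_k := -\lambda_k \V(z_k - z_{k-1}) - D_z\II(t_{k-1},z_k)$ belongs to $\partial \RR(0)$, and by \eqref{R3} together with the density of $\ZZ$ in $\VV$, every element of $\partial \RR(0)$ extends uniquely to an element of $\VV^*$ with norm at most $R$. Taking $w_k := \xi_k + D_z\II(t_{k-1},z_k) = -\lambda_k \V(z_k - z_{k-1}) \in \VV^*$ as competitor yields
\begin{equation*}
    \dist_{\VV^*}\{-D_z\II(t_{k-1},z_k),\partial\RR(0)\} \leq \|\lambda_k \V(z_k - z_{k-1})\|_{\V^{-1}} = \lambda_k \|z_k - z_{k-1}\|_\V,
\end{equation*}
whose right-hand side is uniformly bounded by Lemma~\ref{lem:sumz}.

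For \eqref{distbeschr2}, I would reuse the same $\xi_k$ as a witness, but now for $\eta = -D_z\II(\hat t(s),\hat z(s))$. The corresponding candidate in \eqref{eq:distdef} is $\xi_k + D_z\II(\hat t(s), \hat z(s)) = -\lambda_k\V(z_k - z_{k-1}) + [D_z\II(\hat t(s),\hat z(s)) - D_z\II(t_{k-1},z_k)]$, and the triangle inequality together with the equivalence of $\|\cdot\|_{\V^{-1}}$ and $\|\cdot\|_{\VV^*}$ (which follows from the coercivity of $\V$) leads to
\begin{equation*}
    \dist_{\VV^*}\{-D_z\II(\hat t(s),\hat z(s)),\partial\RR(0)\} \leq \lambda_k\|z_k - z_{k-1}\|_\V + C\,\|D_z\II(\hat t(s),\hat z(s)) - D_z\II(t_{k-1},z_k)\|_{\VV^*}.
\end{equation*}
The first summand is bounded by the previous step, so it remains to estimate the difference of gradients uniformly in $\VV^*$, which also confirms the $\VV^*$-membership of the chosen witness.

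Decomposing $D_z\II = Az + D_z\FF(z) - D_z f(t,z)$, the $\FF$-contribution is handled by the mean value theorem together with \eqref{F2}, exploiting the $\ZZ$-bound from Corollary~\ref{cor:zkbound} and the summand-wise bound $\|z_k-z_{k-1}\|_\ZZ\leq C$ from Lemma~\ref{lem:sumz}. The $f$-contribution is handled by \eqref{f3} together with the same $\ZZ$-bound and the estimate $|\hat t(s) - t_{k-1}| \leq |t_k - t_{k-1}| \leq \tau_k \leq \tau_{\max}$, where the last bound is Lemma~\ref{lem:taumax} and the previous one is Remark~\ref{rem:time}. The delicate piece is $A(\hat z(s) - z_k) = \tfrac{s-s_k}{\tau_k} A(z_k - z_{k-1})$, since $A$ only maps $\ZZ$ into $\ZZ^*$. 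To lift this into $\VV^*$, I would rearrange \eqref{eig4} once more, obtaining
\begin{equation*}
    A z_k = -\lambda_k \V(z_k-z_{k-1}) - \xi_k - D_z\FF(z_k) + D_z f(t_{k-1},z_k),
\end{equation*}
whose right-hand side lies in $\VV^*$ with uniformly bounded norm by Lemma~\ref{lem:sumz}, the bound $R$ on $\partial\RR(0)$, \eqref{F2}, \eqref{f3}, and Corollary~\ref{cor:zkbound}. The resulting bound $\|Az_k\|_{\VV^*} \leq C$ (and analogously for $Az_{k-1}$) then controls $\|A(z_k-z_{k-1})\|_{\VV^*}$ and hence the $A$-term uniformly in $s \in [s_{k-1}, s_k)$.

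The main obstacle is precisely this last estimate: the raw mapping property $A \in \LL(\ZZ,\ZZ^*)$ is too weak to bound $A(z_k - z_{k-1})$ in $\VV^*$, and one has to exploit the fact that \eqref{eig4} implicitly bootstraps $Az_k$ into $\VV^*$ modulo the bounded set $\partial\RR(0)$. Once this observation is made, the remaining steps amount to routine applications of the standing assumptions and the a priori bounds collected in this section.
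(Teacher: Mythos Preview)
Your argument for \eqref{distbeschr} is essentially the paper's: the paper invokes \eqref{eig1}--\eqref{eig2} to obtain the \emph{equality} $\dist_{\VV^*}\{-D_z\II(t_{k-1},z_k),\partial\RR(0)\}=\lambda_k\|z_k-z_{k-1}\|_\V$ and then applies Lemma~\ref{lem:sumz}, while you derive the corresponding inequality by exhibiting the explicit witness from \eqref{eig4}; either way the bound follows immediately from \eqref{sumz}.

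For \eqref{distbeschr2} the two proofs genuinely diverge. The paper never attempts to place $A(z_k-z_{k-1})$ into $\VV^*$. Instead it writes $\hat z(s)=\lambda z_{k+1}+(1-\lambda)z_k$ and $\hat t(s)=\lambda t_{k+1}+(1-\lambda)t_k$, uses the linearity of $A$ so that $-D_z\II(\hat t(s),\hat z(s))$ equals the convex combination $\lambda(-D_z\II(t_{k+1},z_{k+1}))+(1-\lambda)(-D_z\II(t_k,z_k))$ up to correction terms coming only from $D_z\FF$ and $D_zf$ (which lie in $\VV^*$ by \eqref{F2} and \eqref{f3}), and then appeals to the convexity of the distance together with the already established endpoint bounds at $(t_k,z_k)$ and $(t_{k+1},z_{k+1})$. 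This sidesteps the regularity issue entirely. Your route is more direct: you bootstrap $Az_k\in\VV^*$ from the rearranged stationarity condition \eqref{eig4}, using that $\partial\RR(0)\subset\VV^*$ is bounded by \eqref{R3} and density. This is correct (and also covers $Az_0\in\VV^*$ via the standing assumption $D_z\II(0,z_0)\in\VV^*$), and it yields the stronger intermediate statement $\sup_k\|Az_k\|_{\VV^*}<\infty$, which is not visible from the paper's argument. The paper's convexity trick, on the other hand, is slightly more robust in that it would still work if the subdifferential were unbounded in~$\VV^*$, as long as the endpoint distances are finite.
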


\begin{proof}
	For the first claim we make use of \eqref{eig1} and \eqref{eig2}, which gives
	\begin{equation*}
		\dist_{\VV^*}\{-D_z\II(t_{k-1},z_{k}),\partial\RR(0)\}=\lambda_{k}\|z_{k}-z_{k-1}\|_\V
	\end{equation*}
	such that \eqref{distbeschr} follows from \eqref{sumz}. 
	
	 To prove \eqref{distbeschr2}, we first note that the structure of $\II$ and the assumption in \eqref{f3} imply
    \begin{equation}\label{eq:distkk}
	\begin{aligned}
		&\dist_{\VV^*}\{-D_z\II(t_{k},z_{k}),\partial\RR(0)\}\\
		&\qquad =\dist_{\VV^*}\{-D_z\II(t_{k-1},z_{k})+D_zf(t_k,z_k)-D_zf(t_{k-1},z_k),\partial\RR(0)\}\\
		&\qquad \leq \dist_{\VV^*}\{-D_z\II(t_{k-1},z_{k}),\partial\RR(0)\}+\|D_zf(t_k,z_k)-D_zf(t_{k-1},z_k)\|_{\V^{-1}}\\
		&\qquad \leq \dist_{\VV^*}\{-D_z\II(t_{k-1},z_{k}),\partial\RR(0)\}+ C\tau_k.
	\end{aligned}    
    \end{equation}
	Hence, thanks to Lemma~\ref{lem:taumax}, 
	$\dist_{\VV^*}\{-D_z\II(t_{k},z_{k}),\partial\RR(0)\}$ is bounded for all $k\geq1$. 
    Due to $D_z\II(t_0,z_0) = D_z\II(0,z_0) \in \VV^*$ by our standing assumption, 
    the claim also holds for $k=0$.
    Next we rewrite the affine interpolants as 
    \begin{equation*}
        \hat{z}(s)=\lambda(s) z_{k+1}+(1-\lambda(s))z_k 
        \quad \text{and}  \quad \hat{t}(s)=\lambda(s) t_{k+1}+(1-\lambda(s))t_k,    
    \end{equation*}        
    where $\lambda$ is given by $\lambda(s)=\frac{s-s_k}{\tau_{k+1}}\in [0,1)$ for $s\in[s_k,s_{k+1})$.
    The convexity of the distance then gives
    \begin{equation*}
	\begin{aligned}
		&\dist_{\VV^*}\{-D_z\II(\hat{t}(s),\hat{z}(s)),\partial\RR(0)\} \\
		& \leq\lambda(s)\dist_{\VV^*}\{-D_z\II(t_{k+1},z_{k+1}),\partial\RR(0)\} 
		+ (1-\lambda(s))\dist_{\VV^*}\{-D_z\II(t_{k},z_{k}),\partial\RR(0)\}\\
		&\quad +\lambda(s) \|D_z\FF(z_{k+1})-D_z\FF(\hat{z}(s))\|_{\V^{-1}}
		+(1-\lambda(s))\|D_z\FF(z_k)-D_z\FF(\hat{z}(s))\|_{\V^{-1}}\\
		&\quad +\lambda(s) \|D_zf(t_{k+1},z_{k+1})-D_zf(\hat{t}(s),\hat{z}(s))\|_{\V^{-1}}
		+(1-\lambda(s))\|D_zf(t_k,z_k)-D_zf(\hat{t}(s),\hat{z}(s))\|_{\V^{-1}}.
	\end{aligned}    
    \end{equation*}
    While the first two addends on the right hand side are bounded by \eqref{eq:distkk}, 
	we use \eqref{F2} and \eqref{zbeschr} to estimate the third one by
	\begin{align*}
		\|D_z\FF(z_{k+1})-D_z\FF(\hat{z}(s))\|_{\V^{-1}} 
		&= \Big\|\int_0^1 D_z^2\FF(\hat{z}(s)+r(z_{k+1}-\hat{z}(s)))[z_{k+1}-\hat{z}(s)]dr\Big\|_{\V^{-1}}\\
		&\leq C \int_0^1 (1+\|\hat{z}(s)+r(z_{k+1}-\hat{z}(s))\|_\ZZ^q)\|z_{k+1}-\hat{z}(s)\|_\ZZ dr
		\leq C.
	\end{align*}
	Similarly, we deduce from \eqref{f3}, \eqref{zbeschr}, and $t_{k+1} - t_k \leq \tau_k \leq C$ for all $k\in \N$ that 
	\begin{align*}
		\|D_zf(t_{k+1},z_{k+1})-D_zf(\hat{t}(s),\hat{z}(s))\|_{\V^{-1}} 
		\leq C(|t_{k+1}-\hat{t}(s)|+\|z_{k+1}-\hat{z}(s)\|_\ZZ)
		\leq C.
	\end{align*}
	For the terms including $t_k$ and $z_k$ one can argue in the same way. 
	This finally gives the claim.
\end{proof}

We proceed with the complementarity relation in \eqref{eq:komp1}. 
As an immediate consequence of the time update in Step~\ref{it:update}, Remark~\ref{rem:time}, 
and the construction of the affine interpolants in Step~\ref{it:4} of the algorithm, we deduce the following

\begin{lemma}\label{lem:complaffine}
    The affine interpolants satisfy 
	\begin{equation}
		\hat{t}'(s)\geq 0, \quad \hat{t}'(s)+\|\hat{z}'(s)\|_\V=1  \label{beschr}
	\end{equation}
    for all $s \neq s_k$, $k\in \N$.
\end{lemma}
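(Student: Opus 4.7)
The statement is almost immediate from the construction, so the plan is simply to differentiate the affine interpolants defined in Step~\ref{it:4} and substitute the time update from Step~\ref{it:update}. For any $k\in\N$ and $s\in(s_{k-1},s_k)$, a direct differentiation of
\begin{equation*}
    \hat{z}(s)=z_{k-1}+\tfrac{s-s_{k-1}}{\tau_k}(z_k-z_{k-1}),\quad
    \hat{t}(s)=t_{k-1}+\tfrac{s-s_{k-1}}{\tau_k}(t_k-t_{k-1})
\end{equation*}
yields $\hat z'(s)=(z_k-z_{k-1})/\tau_k$ and $\hat t'(s)=(t_k-t_{k-1})/\tau_k$, both constant on each open interval $(s_{k-1},s_k)$, which is why the statement is only claimed away from the grid points $s_k$.

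For the sign condition, I would invoke the time update $t_k-t_{k-1}=\tau_k-\|z_k-z_{k-1}\|_\V$ from Step~\ref{it:update} together with Remark~\ref{rem:time}, which ensures $\|z_k-z_{k-1}\|_\V\leq\tau_k$ thanks to the indicator functional $I_{\tau_k}$ in \eqref{alg1}. This gives $\hat t'(s)\geq 0$. For the equality, the same time update yields
\begin{equation*}
    \hat t'(s)+\|\hat z'(s)\|_\V
    =\frac{t_k-t_{k-1}}{\tau_k}+\frac{\|z_k-z_{k-1}\|_\V}{\tau_k}
    =\frac{\tau_k-\|z_k-z_{k-1}\|_\V+\|z_k-z_{k-1}\|_\V}{\tau_k}=1,
\end{equation*}
so both claims follow at once. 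There is essentially no obstacle here; the only subtlety worth flagging is that at the grid points $s=s_k$ the left and right derivatives may differ (since both $\tau_k$ and the increments change), which is precisely why the statement is formulated for $s\neq s_k$.
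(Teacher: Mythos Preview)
Your proof is correct and follows exactly the approach the paper indicates: it cites the time update in Step~\ref{it:update}, Remark~\ref{rem:time}, and the construction of the affine interpolants in Step~\ref{it:4} as an immediate consequence, and you have simply written out those computations explicitly.
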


The next result essentially allows us to prove that the end time is reached 
and the algorithm will stop after finitely many iterations, see Proposition~\ref{endtime} below. 
Since the associated  proof is essentially analogous to
the one of \cite[Proposition~2.4]{knees}, we postpone it to Appendix~\ref{sec:I1I2}.

\begin{lemma}\label{lem:I1I2}
    Define the function $r: \R_+\setminus\{s_k\}_{k\in \N} \to \R$ by 
    \begin{equation}\label{eq:rdef}
        r(s) := \langle D_z\II(\hat{t}(s),\hat{z}(s))-D_z\II(\underline{t}(s),\bar{z}(s)),
        \hat{z}'(s)\rangle_{\ZZ^*,\ZZ} .
    \end{equation}        
    Then, for all $k\in \N$, there holds
    \begin{equation}
        r(s) \leq \bar c\,\tau_k \quad \forall  s\in(s_{k-1},s_k)\label{rabsch}
    \end{equation}
    and
	\begin{align}
        I_1^k &= \int_{s_{k-1}}^{s_k}\hat{t}'(s)\,\dist_{\VV^*}\{-D_z\II(\hat{t}(s),\hat{z}(s)),\partial\RR(0)\}ds 
		\leq \bar c\, \tau_k, \label{e1}\\
        I_2^k &= \int_{s_{k-1}}^{s_k}r(s)+ \|\hat{z}'(s)\|_\V
        \begin{aligned}[t]
    		\Big( & \dist_{\VV^*}\{-D_z\II(\hat{t}(s),\hat{z}(s)),\partial\RR(0)\} \\[-1ex]
	    	& -\dist_{\VV^*}\{-D_z\II(\underline{t}(s),\bar{z}(s)),\partial\RR(0)\}\Big) ds \leq \bar c\, \tau_k, 
        \end{aligned}\label{e2}
	\end{align}
	with a constant $\bar c >0$, independent of $\tau_k$ and the chosen tolerance.
\end{lemma}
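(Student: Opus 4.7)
The plan is to first establish the pointwise bound \eqref{rabsch} on $r(s)$ and then to derive the integral bounds \eqref{e1} and \eqref{e2} from it, combined with Lemma~\ref{lem:distbound} and \eqref{beschr}. The key algebraic observation on the interval $(s_{k-1}, s_k)$ is that
\begin{equation*}
\hat z(s) - \bar z(s) = -(s_k - s)\,\hat z'(s),
\qquad |\hat t(s) - \underline t(s)| \leq t_k - t_{k-1} \leq \tau_k,
\end{equation*}
so that $\hat z - \bar z$ is parallel to $\hat z'$ with scale factor $s_k - s \in [0, \tau_k]$, and the time difference in $r(s)$ is controlled by $\tau_k$.

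Splitting the energy derivative as $D_z \II(t,z) = A z + D_z\FF(z) - D_z f(t,z)$, the $A$-contribution becomes
\begin{equation*}
\langle A(\hat z - \bar z), \hat z'\rangle_{\ZZ^*,\ZZ} = -(s_k-s)\,\langle A \hat z', \hat z'\rangle_{\ZZ^*,\ZZ} \leq -\alpha(s_k-s)\|\hat z'\|_\ZZ^2,
\end{equation*}
which is the coercive absorption term. For the $\FF$-part, \eqref{estF} applied at $z_1 = \hat z(s)$, $z_2 = \bar z(s)$ (permitted by \eqref{zbeschr}) combined with the parallel identity and $1$-homogeneity of $\RR$ yields
\begin{equation*}
|\langle D_z \FF(\hat z) - D_z\FF(\bar z), \hat z'\rangle_{\VV^*,\VV}| \leq \epsilon(s_k-s)\|\hat z'\|_\ZZ^2 + C_\epsilon (s_k - s)\,\RR(\hat z')\,\|\hat z'\|_\VV,
\end{equation*}
and the analogous application of \eqref{estf} at $(t_1, z_1) = (\hat t, \hat z)$, $(t_2, z_2) = (\underline t, \bar z)$, $v = \hat z'$ gives
\begin{equation*}
|\langle D_z f(\hat t, \hat z) - D_z f(\underline t, \bar z), \hat z'\rangle_{\VV^*,\VV}| \leq \nu\tau_k \|\hat z'\|_\VV + \epsilon(s_k - s)\|\hat z'\|_\ZZ^2 + c_\epsilon(s_k - s)\,\RR(\hat z')\,\|\hat z'\|_\VV.
\end{equation*}
Choosing $\epsilon = \alpha/4$, the two $\|\hat z'\|_\ZZ^2$-terms are dominated by the $A$-contribution and can be dropped. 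The remaining terms are all of the form $C\tau_k\,\|\hat z'\|_\VV$ or $C\tau_k\,\RR(\hat z')\|\hat z'\|_\VV$; since $\|\hat z'\|_\V \leq 1$ by \eqref{beschr}, the equivalence of $\|\cdot\|_\V$ and $\|\cdot\|_\VV$ together with \eqref{R3} gives a uniform bound on $\|\hat z'\|_\VV$ and $\RR(\hat z')$, establishing \eqref{rabsch} with a constant $\bar c$ independent of $\tau_k$ and of the tolerance.

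With \eqref{rabsch} in hand, \eqref{e1} is immediate: the integrand is bounded by $\hat t'(s) \leq 1$ (from \eqref{beschr}) times the uniformly bounded distance from \eqref{distbeschr2}, so integration over the interval of length $\tau_k$ gives the claim. For \eqref{e2}, the $r$-part integrates to at most $\bar c\, \tau_k \cdot \tau_k = O(\tau_k^2)$ by \eqref{rabsch} and Lemma~\ref{lem:taumax}, while the remaining term is estimated by using $\|\hat z'\|_\V \leq 1$ together with the distance bounds \eqref{distbeschr} (applied at $(t_{k-1}, z_k) = (\underline t(s), \bar z(s))$) and \eqref{distbeschr2} (applied at $(\hat t(s), \hat z(s))$); the integrand is thus uniformly bounded and integration over $(s_{k-1}, s_k)$ yields the desired $O(\tau_k)$ bound.

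The main obstacle is the absorption in the proof of \eqref{rabsch}: since $\|\hat z'\|_\ZZ$ is not uniformly bounded in $k$ (only $\|\hat z'\|_\V$ is, by \eqref{beschr}), the $\|\hat z'\|_\ZZ^2$-contributions produced by \eqref{estF} and \eqref{estf} genuinely must be offset by the coercive $A$-term rather than estimated directly. This forces one to retain the correct sign and the precise weight $s_k - s$ (not merely the coarser bound $\tau_k$) in the $A$-contribution, and in particular to use the exact identity $\hat z - \bar z = -(s_k - s)\hat z'$ rather than just the norm estimate.
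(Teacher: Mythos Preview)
Your proof is correct and follows essentially the same route as the paper: split $r(s)$ according to $D_z\II = A + D_z\FF - D_z f$, exploit the identity $\hat z - \bar z = -(s_k - s)\hat z'$ to get the coercive $A$-term with the right sign, absorb the $\|\hat z'\|_\ZZ^2$-contributions from \eqref{estF} and \eqref{estf} with $\epsilon = \alpha/4$, and then use \eqref{beschr} and \eqref{R3} to bound the remaining terms; the integral estimates \eqref{e1} and \eqref{e2} then follow from \eqref{distbeschr}, \eqref{distbeschr2}, and \eqref{beschr}. The only cosmetic difference is that the paper writes the time increment as $|s - s_{k-1}|\,\hat t'(s)$ rather than your coarser bound $\tau_k$, but this changes nothing in the argument.
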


Lemma~\ref{lem:complaffine} already provides a bound for $\hat z'$, but only in $\VV$.
The next lemma addresses a bound w.r.t.\ the $\ZZ$-norm.
Its proof combines arguments from \cite[Proposition 3.7 and Lemma~3.9]{siev21}
and is given in Appendix~\ref{sec:tzbeschr} for convenience of the reader.

\begin{lemma}\label{tzbeschr}
	There exists a constant $C>0$, independent of the tolerance and the choice of the step size, such that, 
	for all $k\in \N$, there holds $\|\hat{z}\|_{H^{1}(0,s_k;\ZZ)}\leq C$.
\end{lemma}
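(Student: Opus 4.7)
The plan is to decompose $\|\hat z\|_{H^1(0,s_k;\ZZ)}^2$ into its $L^2$ and seminorm parts and to bound each separately.

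For the $L^2$-norm, I would observe that on $[s_{i-1},s_i)$ the interpolant $\hat z(s)$ is a convex combination of $z_{i-1}$ and $z_i$, so that Corollary~\ref{cor:zkbound} yields $\|\hat z(s)\|_\ZZ \leq C$ uniformly in $s$. Combined with the bound
\begin{equation*}
s_k = t_k + \sum_{i=1}^k \|z_i-z_{i-1}\|_\V \leq T + C\sum_{i=1}^k \|z_i-z_{i-1}\|_\ZZ \leq T + C,
\end{equation*}
coming from the time update in Step~\ref{it:update}, the continuous embedding $\ZZ \embed \VV$, and Lemma~\ref{lem:sumz}, this gives $\|\hat z\|_{L^2(0,s_k;\ZZ)}^2 \leq Cs_k \leq C$.

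For the seminorm, since $\hat z' \equiv (z_i-z_{i-1})/\tau_i$ on $[s_{i-1},s_i)$, we have $\|\hat z'\|_{L^2(0,s_k;\ZZ)}^2 = \sum_{i=1}^k \|z_i-z_{i-1}\|_\ZZ^2/\tau_i$. The constraint $\|z_i-z_{i-1}\|_\V \leq \tau_i$ built into \eqref{alg1} via the indicator $I_{\tau_i}$ allows me to bound each summand by $\|z_i-z_{i-1}\|_\ZZ^2/\|z_i-z_{i-1}\|_\V$ (trivially zero when $z_i=z_{i-1}$). The heart of the proof is then to bound this latter sum uniformly. To this end I would revisit the inequality \eqref{q1} from the proof of Lemma~\ref{lem:sumz}, divide it by $\|z_{k+1}-z_k\|_\V$, and exploit (i) the norm equivalence of $\|\cdot\|_\V$ and $\|\cdot\|_\VV$ on $\VV$, together with (ii) the identity $\lambda_k\|z_k-z_{k-1}\|_\V = \dist_{\VV^*}\{-D_z\II(t_{k-1},z_k),\partial\RR(0)\}$, which follows by combining \eqref{eig1} and \eqref{eig2} in Lemma~\ref{lem:stat} (both sides vanish when $\lambda_k=0$, while for $\lambda_k>0$ one has $\|z_k-z_{k-1}\|_\V=\tau_k$). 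Writing $d_k := \dist_{\VV^*}\{-D_z\II(t_{k-1},z_k),\partial\RR(0)\}$, this yields the key telescoping inequality
\begin{equation*}
\frac{\alpha}{2}\frac{\|z_{k+1}-z_k\|_\ZZ^2}{\|z_{k+1}-z_k\|_\V} + d_{k+1} \leq d_k + C\bigl(\RR(z_{k+1}-z_k) + (t_k-t_{k-1})\bigr)
\end{equation*}
for all $k\geq 1$. Summing over $k$, the $d_\cdot$ terms telescope, and what remains is controlled uniformly by $d_1 \leq C$ (Lemma~\ref{lem:distbound}), $\sum\RR(z_{k+1}-z_k)\leq C$ (Lemma~\ref{irbeschr}), and $\sum(t_k-t_{k-1})\leq T$ (termination criterion of Algorithm~\ref{alg:adaptlocmin}). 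The remaining boundary contribution $\|z_1-z_0\|_\ZZ^2/\|z_1-z_0\|_\V$ is treated analogously by dividing \eqref{q3} by $\|z_1-z_0\|_\V$ and invoking the standing assumption $D_z\II(0,z_0) \in \VV^*$.

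The main obstacle I anticipate is identifying the correct scaling. The naive approach of dividing \eqref{q1} by $\tau_{k+1}$ and summing would reduce the bound on $\sum\|z_{k+1}-z_k\|_\ZZ^2/\tau_{k+1}$ to a bound on $\sum d_k$, for which only a termwise estimate $d_k\leq C$ is available via Lemma~\ref{lem:distbound}; this would give a constant that grows with the number of iterations $N$. Dividing by $\|z_{k+1}-z_k\|_\V$ instead of $\tau_{k+1}$ is the decisive trick, since it converts the mixed term $\lambda_k\|z_k-z_{k-1}\|_\V\|z_{k+1}-z_k\|_\V/\tau_{k+1}$ into $\lambda_k\|z_k-z_{k-1}\|_\V = d_k$, which then telescopes against $\lambda_{k+1}\|z_{k+1}-z_k\|_\V = d_{k+1}$ absorbed from the left-hand side.
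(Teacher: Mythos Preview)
Your proof is correct and follows essentially the same strategy as the paper: return to \eqref{q1}, divide so as to obtain a telescoping sum in $\lambda_i\|z_i-z_{i-1}\|_\V$, and sum using Lemma~\ref{irbeschr} and the initial-data assumption. The only remark concerns your closing paragraph: the ``naive'' approach of dividing \eqref{q1} by $\tau_{k+1}$ is precisely what the paper does, and after using \eqref{eig1} to rewrite $\lambda_{k+1}\|z_{k+1}-z_k\|_\V^2=\lambda_{k+1}\tau_{k+1}\|z_{k+1}-z_k\|_\V$ together with $\|z_{k+1}-z_k\|_\V\leq\tau_{k+1}$ on the mixed term, it produces the same telescoping difference $\lambda_i\|z_i-z_{i-1}\|_\V-\lambda_{i+1}\|z_{i+1}-z_i\|_\V$ rather than the non-telescoping $\sum d_k$ you anticipated.
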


The next auxiliary result addresses the complementarity relation and the energy balance in 
\eqref{komp} and \eqref{energie}.
Its proof is only based on the stationarity conditions from Lemma~\ref{lem:stat} 
in combination with a chain rule for the energy.
It is therefore completely analogous to the case with uniform step size and hence omitted.

\begin{lemma}[{\cite[Proposition~2.4]{knees}, \cite[Lemma~3.8]{FElocmin}}] \label{fehler}
    Let $k\in \N$ be arbitrary.  For every choice of the step sizes,     
    the affine and constant interpolants generated by Algorithm~\ref{alg:adaptlocmin} fulfill
	\begin{align}
		\hat{t}'(s)\, \dist_{\VV^*}\{-D_z\II(\underline{t}(s),\bar{z}(s)),\partial\RR(0)\}=0 
		\quad \text{a.e.\ in } (0, s_k).\label{kompl}
	\end{align}
    and, for all $0\leq a \leq b \leq s_k$, the following discrete energy identity holds true
	\begin{equation}
		\begin{aligned}
			\II(\hat{t}(b),\hat{z}(b))+\int_{a}^{b}\RR(\hat{z}'(\sigma))
			+ \dist_{\VV^*}\{-D_z\II(\underline{t}(\sigma),\bar{z}(\sigma)),\partial\RR(0)\}d\sigma  & \\
			-\II(\hat{t}(a),\hat{z}(a))-\int_{a}^{b} 
			\partial_t\II(\hat{t}(\sigma),\hat{z}(\sigma))\,\hat{t}'(\sigma)d\sigma 
			&= \int_{a}^{b} r(\sigma)d\sigma, \label{diskenergy}
		\end{aligned}
	\end{equation}
	where $r$ is as defined in \eqref{eq:rdef}.
\end{lemma}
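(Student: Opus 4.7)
The plan is to treat the two assertions separately. For the complementarity relation \eqref{kompl}, fix $s$ in the interior of some subinterval $(s_{k-1},s_k)$. On such a subinterval $\bar{z}(s)=z_k$ and $\underline{t}(s)=t_{k-1}$ are constant, while the time update in Step~\ref{it:update} gives
\[
\hat{t}'(s) \;=\; \frac{t_k-t_{k-1}}{\tau_k} \;=\; 1-\frac{\|z_k-z_{k-1}\|_\V}{\tau_k} \;\geq\; 0.
\]
If $\hat{t}'(s)>0$, then $\|z_k-z_{k-1}\|_\V<\tau_k$, so the complementarity \eqref{eig1} of Lemma~\ref{lem:stat} forces $\lambda_k=0$, and \eqref{eig2} in turn yields $\dist_{\VV^*}\{-D_z\II(t_{k-1},z_k),\partial\RR(0)\}=0$. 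In either case, the product in \eqref{kompl} vanishes.

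For the energy identity \eqref{diskenergy}, the plan is to apply the chain rule to $\sigma\mapsto\II(\hat{t}(\sigma),\hat{z}(\sigma))$. Since $\II\in C^1([0,T]\times\ZZ;\R)$ and the interpolants are continuous and affine on each $(s_{j-1},s_j)$ with bounded derivatives, the chain rule is available on each subinterval, and by telescoping (using continuity of $\hat{t},\hat{z}$ at the nodes $s_j$) we obtain for all $0\leq a\leq b\leq s_k$
\[
\II(\hat{t}(b),\hat{z}(b))-\II(\hat{t}(a),\hat{z}(a))
= \int_a^b \partial_t\II(\hat{t},\hat{z})\,\hat{t}' \,d\sigma
+ \int_a^b \langle D_z\II(\hat{t},\hat{z}),\hat{z}'\rangle_{\ZZ^*,\ZZ}\,d\sigma.
\]
Using the definition \eqref{eq:rdef} to split $\langle D_z\II(\hat{t},\hat{z}),\hat{z}'\rangle = r + \langle D_z\II(\underline{t},\bar{z}),\hat{z}'\rangle$, the identity \eqref{diskenergy} reduces to showing
\[
\int_a^b \Big[\,\langle D_z\II(\underline{t},\bar{z}),\hat{z}'\rangle_{\ZZ^*,\ZZ} + \RR(\hat{z}') + \dist_{\VV^*}\{-D_z\II(\underline{t},\bar{z}),\partial\RR(0)\}\,\Big]\,d\sigma = 0.
\]

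On each subinterval $(s_{j-1},s_j)$, the functions $\underline{t}$ and $\bar{z}$ are constant, $\hat{z}'=(z_j-z_{j-1})/\tau_j$ is constant, and the $1$-homogeneity \eqref{R2} gives $\RR(\hat{z}')=\RR(z_j-z_{j-1})/\tau_j$. Thus the bracketed integrand is constant on $(s_{j-1},s_j)$, and multiplying by $\tau_j$ produces exactly
\[
\langle D_z\II(t_{j-1},z_j),z_j-z_{j-1}\rangle_{\ZZ^*,\ZZ} + \RR(z_j-z_{j-1}) + \tau_j\,\dist_{\VV^*}\{-D_z\II(t_{j-1},z_j),\partial\RR(0)\},
\]
which vanishes by the stationarity identity \eqref{eig3}. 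Since the integrand is constant on each subinterval, the same cancellation survives on the two possibly partial subintervals containing the endpoints $a$ and $b$. The only real obstacle is the routine bookkeeping: justifying the piecewise chain rule across the nodes $s_j$ and handling partial subintervals at the endpoints of $[a,b]$, both of which are immediate given the $C^1$ regularity of $\II$ and the continuity of the piecewise affine interpolants.
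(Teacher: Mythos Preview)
Your proof is correct and follows precisely the approach indicated by the paper, which omits the argument and refers to \cite[Proposition~2.4]{knees} and \cite[Lemma~3.8]{FElocmin}: the complementarity relation follows from \eqref{eig1}--\eqref{eig2}, and the discrete energy identity from the chain rule for $\II$ combined with the stationarity identity \eqref{eig3}. Your handling of partial subintervals via the piecewise-constant nature of the integrand is exactly what is needed.
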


\begin{lemma}\label{lem:I2pos}
    For all $k\in \N$, there holds $I^k_{2} \geq 0$.
\end{lemma}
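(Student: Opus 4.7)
The plan is to show $I_2^k \geq 0$ by comparing the discrete energy identity from Lemma~\ref{fehler} with the energy inequality for the affine interpolants guaranteed by Remark~\ref{rem:energyid}. The point is that both contain the same boundary terms $\II(\hat t(s_k),\hat z(s_k)) - \II(\hat t(s_{k-1}),\hat z(s_{k-1}))$ and the same power term $\int_{s_{k-1}}^{s_k}\partial_t\II(\hat t,\hat z)\,\hat t'\, d\sigma$, and the same dissipation integral $\int_{s_{k-1}}^{s_k}\RR(\hat z')\,d\sigma$, so the difference of the two expressions will isolate exactly the integrand defining $I_2^k$.

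First I would verify that the hypotheses of Remark~\ref{rem:energyid} are met on $[s_{k-1},s_k]$: $\hat t$ is piecewise affine and hence in $W^{1,\infty}$, while $\hat z$ lies in $H^1(0,s_k;\ZZ) \hookrightarrow W^{1,1}(0,s_k;\ZZ)$ by Lemma~\ref{tzbeschr}, and $\dist_{\VV^*}\{-D_z\II(\hat t,\hat z),\partial\RR(0)\}$ is bounded (hence finite) thanks to Lemma~\ref{lem:distbound}. Consequently, the energy inequality \eqref{eq:enerineq} yields
\begin{equation*}
\begin{aligned}
0 \leq\ & \II(\hat t(s_k),\hat z(s_k)) - \II(\hat t(s_{k-1}),\hat z(s_{k-1})) - \int_{s_{k-1}}^{s_k}\partial_t\II(\hat t(\sigma),\hat z(\sigma))\,\hat t'(\sigma)\,d\sigma \\
 & + \int_{s_{k-1}}^{s_k}\RR(\hat z'(\sigma)) + \|\hat z'(\sigma)\|_\V\,\dist_{\VV^*}\{-D_z\II(\hat t(\sigma),\hat z(\sigma)),\partial\RR(0)\}\,d\sigma.
\end{aligned}
\end{equation*}

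The key trick is to rewrite the distance term in the discrete identity \eqref{diskenergy} with the extra factor $\|\hat z'\|_\V$. By Lemma~\ref{lem:complaffine} we have $\hat t'(s)+\|\hat z'(s)\|_\V = 1$ a.e., and by the complementarity relation \eqref{kompl}, $\hat t'(s)\,\dist_{\VV^*}\{-D_z\II(\underline t(s),\bar z(s)),\partial\RR(0)\} = 0$ a.e.; together these give
\begin{equation*}
\dist_{\VV^*}\{-D_z\II(\underline t(s),\bar z(s)),\partial\RR(0)\} = \|\hat z'(s)\|_\V\,\dist_{\VV^*}\{-D_z\II(\underline t(s),\bar z(s)),\partial\RR(0)\} \quad \text{a.e.}
\end{equation*}
Inserting this identity into \eqref{diskenergy} on $[s_{k-1},s_k]$ produces
\begin{equation*}
\begin{aligned}
\int_{s_{k-1}}^{s_k} r(\sigma)\,d\sigma =\ & \II(\hat t(s_k),\hat z(s_k)) - \II(\hat t(s_{k-1}),\hat z(s_{k-1})) - \int_{s_{k-1}}^{s_k}\partial_t\II(\hat t(\sigma),\hat z(\sigma))\,\hat t'(\sigma)\,d\sigma \\
 & + \int_{s_{k-1}}^{s_k}\RR(\hat z'(\sigma)) + \|\hat z'(\sigma)\|_\V\,\dist_{\VV^*}\{-D_z\II(\underline t(\sigma),\bar z(\sigma)),\partial\RR(0)\}\,d\sigma.
\end{aligned}
\end{equation*}
Subtracting this equality from the energy inequality cancels the boundary, power, and $\RR$-terms and leaves precisely $0 \leq I_2^k$.

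I do not anticipate a serious obstacle, since each ingredient (energy inequality, discrete identity, complementarity) has been established above. The only point that requires care is the substitution of $\|\hat z'\|_\V$ into the dist-term in \eqref{diskenergy}, which hinges on combining \eqref{kompl} with the normalization in Lemma~\ref{lem:complaffine}; once this is recognized, the remaining step is purely algebraic.
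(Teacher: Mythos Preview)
Your proof is correct and follows essentially the same approach as the paper: both combine the discrete energy identity \eqref{diskenergy} with the energy inequality from Remark~\ref{rem:energyid}, using the complementarity relation \eqref{kompl} together with $\hat t' + \|\hat z'\|_\V = 1$ to insert the factor $\|\hat z'\|_\V$ in front of the distance term. The paper merely orders the computation differently, starting from $I_2^k$ and rewriting it via \eqref{diskenergy} into the right-hand side of \eqref{eq:enerineq}, whereas you subtract the two expressions; the substance is identical.
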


\begin{proof}
    Using $\hat t' + \|\hat z'\|_{\V} = 1$ a.e.\ by Lemma~\ref{lem:complaffine} and the discrete energy identity from 
    \eqref{diskenergy}, we find
    \begin{equation*}
		\begin{aligned}
            I^k_{2}
            &= \int_{s_{k-1}}^{s_k}r(s)+ \|\hat{z}'(s)\|_\V
            \begin{aligned}[t]
    		    \Big( & \dist_{\VV^*}\{-D_z\II(\hat{t}(s),\hat{z}(s)),\partial\RR(0)\} \\[-1ex]
	    	    & -\dist_{\VV^*}\{-D_z\II(\underline{t}(s),\bar{z}(s)),\partial\RR(0)\}\Big) ds 
            \end{aligned}\\
		    &= \II(\hat{t}(s_k),\hat{z}(s_k))+\int_{s_{k-1}}^{s_k}\RR(\hat{z}'(s))
			+ \|\hat{z}'(s)\|_\V\, \dist_{\VV^*}\{-D_z\II(\hat{t}(s),\hat{z}(s)),\partial\RR(0)\} ds   \\
			&\quad -\II(\hat{t}(s_{k-1}),\hat{z}(s_{k-1}))-\int_{s_{k-1}}^{s_k} 
			\partial_t\II(\hat{t}(s),\hat{z}(s))\,\hat{t}'(s)d s \\
			&\geq 0,
		\end{aligned}
	\end{equation*}
	where the last inequality follows from Remark~\ref{rem:energyid}.
\end{proof}

Lemma~\ref{fehler} demonstrates 
that the complementarity relation \eqref{komp} and the energy identity \eqref{energie} are 
not fulfilled by the affine interpolants (as expected). The mismatch between \eqref{komp} and its discrete counterpart 
\eqref{kompl} arises, since the constant instead of the affine interpolant appears in the distance in \eqref{kompl}.
This gives rise to our first residuum in \eqref{i1}, which measures the defect, if we insert $(\hat t, \hat z)$ in 
the complementarity condition instead of $(\underline{t},\bar{z})$. 
Similarly, the ``wrong'' interpolant appears in the distance in \eqref{diskenergy}, too. 
Note in this context that the missing  $\V$-norm of $\hat z'$ in front of the distance in comparison to 
\eqref{energie} can be inserted here, since, by \eqref{beschr} and \eqref{kompl}, 
\begin{equation*}
    \dist_{\VV^*}\{-D_z\II(\underline{t}(s),\bar{z}(s)),\partial\RR(0)\}
    = \|\hat z'(s)\|_{\V} \,\dist_{\VV^*}\{-D_z\II(\underline{t}(s),\bar{z}(s)),\partial\RR(0)\}
    \quad \text{a.e.\ in } (0,s_k).
\end{equation*}
However, the discrete energy identity in form of \eqref{diskenergy} without this norm is better suited 
for the convergence analysis in the upcoming sections.
Moreover, an additional term involving $r$ emerges on the right hand side of the discrete energy identity. 
Comparing \eqref{energie} with \eqref{diskenergy} thus results in the second residuum in \eqref{i2}, 
cf.\ also \eqref{e2}.

\begin{remark}\label{rem:tau_egal}
    As indicated at the beginning of this section, the aforementioned results are completely 
    independent of the choice of the step size, since their proofs only employ the stationarity conditions 
    from Lemma~\ref{lem:stat}, the decay condition in \eqref{alg2}, the time update formula from Step~\ref{it:update},
    and our standing assumptions on energy and dissipation.
    This observation will be of major importance in Section~\ref{sec:convinf}.
\end{remark}

As it will turn out, the trouble maker in the convergence analysis is not the mismatch induced by
$r$, since we already have a fairly rigorous estimate thereof in form of \eqref{rabsch}.
The delicate issue is to pass to the limit with the distance terms. If however the involved spaces 
and their respective norms happen to be equivalent, the analysis simplifies significantly and one obtains 
much sharper estimates. Moreover, an additional assumption on the algorithm 
leading to a modification of the update of the step size becomes 
superfluous in this case (see Assumption~\ref{assu:nested} below).
For this reason, we will treat this case separately in the next section.


\section{Convergence Analysis in $\R^n$}\label{sec:convRn}

As indicated above, we restrict ourselves to the case, where $\ZZ = \VV$.
We denote this space by $\VV$ and its norm by $\| \cdot \|$ in the following.
A natural example for this setting is of course the finite dimensional case, i.e., $\VV = \R^n$, 
but infinite dimensional settings are conceivable as well. Consider for instance a measure space $(\Omega, \AA, \mu)$ 
with a finite measure $\mu$
and take $\VV = L^2(\mu)$ and $\XX = L^1(\mu)$. 
Then, $\RR$ and $f$ can be chosen as in Example~\ref{ex:bspsystem} and 
$A$ is set to be a weighted $L^2(\mu)$-norm squared. If $\FF$ is chosen as 
$\FF(z) := \int_\Omega g(T z) d \mu$, where $T \in L(L^2(\mu), L^p(\mu))$, $p>2$, is a compact operator 
(e.g., a convolution with a smoothing kernel or another integral operator) and 
$g: L^p(\mu) \to L^2(\mu)$ is a suitable smooth Nemyzkii operator, then the assumptions 
in \eqref{F1}--\eqref{eq:DFFweakcont} can be fulfilled so that the example fits into our general setting. 
Nevertheless, the most relevant example is probably $\VV = \R^n$, which also motivates the title of this section.

So far, we do not know whether Algorithm~\ref{alg:adaptlocmin} is well posed in the sense that the 
termination criterion in Step~\ref{it:abbruch} is fulfilled for $k$ sufficiently large and the end time is reached.
This will be shown next under the additional assumption that 
\begin{equation}\label{eq:initialtau}
     \frac{\tol}{2\,\bar c} \leq \tau_1.
\end{equation}
Of course, since we drive $\tol$ to zero anyway, this assumption is not restrictive at all. 
Therefore, we tacitly take it for granted for the rest of this section without mentioning it every time.

\begin{proposition}\label{endtime}
	Algorithm~\ref{alg:adaptlocmin} terminates after a finite number of iterations, i.e., there exists $N_{\tol}\in\N$, 
	depending on the chosen tolerance, so that $t_{N_{\tol}}\geq T$. 
\end{proposition}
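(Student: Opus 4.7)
The plan is to show that the accepted step sizes cannot be arbitrarily small: every $\tau_k$ that survives the tolerance check satisfies $\tau_k \geq \tau_{\min} := \tol/(2\bar c)$, with $\bar c$ the constant from Lemma~\ref{lem:I1I2}. Once this is established, the time update in Step~\ref{it:update} together with the bounded-variation-type estimate \eqref{sumz} will force $t_k$ to grow linearly in $k$ and therefore to exceed $T$ after finitely many steps.

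First I would argue that the inner halving loop in Step~\ref{it:reduc} always terminates: after $j$ successive halvings in iteration $k$, the candidate step size is $\tau_k^{(0)}/2^j$, so by Lemma~\ref{lem:I1I2} one has $I_1^k, I_2^k \leq \bar c\,\tau_k^{(0)}/2^j$, which drops below $\tol$ after a finite number of halvings, and the step is accepted. The lower bound on the accepted $\tau_k$ then follows by induction on $k$. For $k=1$, the initial choice satisfies $\tau_1 \geq \tol/(2\bar c)$ by assumption~\eqref{eq:initialtau}. For the inductive step, the value $\tau_k^{(0)}$ entering iteration $k$ is either $\tau_{k-1}$ or $2\tau_{k-1}$ (depending on the doubling rule in Step~\ref{it:10}) and hence is at least $\tau_{\min}$ by the inductive hypothesis. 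If no halving occurs, $\tau_k = \tau_k^{(0)} \geq \tau_{\min}$. Otherwise, let $\tau^{\mathrm{pre}}$ denote the value immediately before the last halving; since the tolerance check failed at $\tau^{\mathrm{pre}}$, Lemma~\ref{lem:I1I2} gives
$$\tol \leq \max\{I_1^k, I_2^k\} \leq \bar c\,\tau^{\mathrm{pre}},$$
so the accepted step size $\tau_k = \tau^{\mathrm{pre}}/2 \geq \tol/(2\bar c) = \tau_{\min}$.

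With this uniform lower bound in hand, the time update in Step~\ref{it:update} yields
$$t_k = \sum_{i=1}^k \tau_i - \sum_{i=1}^k \|z_i-z_{i-1}\|_\V \geq k\,\tau_{\min} - C,$$
where $C$ is the constant from Lemma~\ref{lem:sumz} (note that the sum in $\V$ is bounded via the continuous embedding $\ZZ \embed \VV$). Since $\tau_{\min} > 0$ for every fixed $\tol > 0$, the right-hand side diverges, and the termination criterion in Step~\ref{it:abbruch} is met once $k \geq \lceil (T+C)/\tau_{\min}\rceil$. Setting $N_{\tol} := \lceil 2\bar c(T+C)/\tol\rceil$ therefore yields $t_{N_{\tol}} \geq T$.

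I do not expect a genuine obstacle here. The only subtle point is the lower bound on $\tau_k$, which hinges on the interplay between the halving rule and the a priori residual estimate $I_1^k, I_2^k \leq \bar c\,\tau_k$ of Lemma~\ref{lem:I1I2}; the assumption \eqref{eq:initialtau} is precisely what is needed to start the induction. As one would expect, the resulting bound $\tau_{\min}$ degenerates linearly as $\tol \to 0$, so this argument tells us nothing quantitative about the behaviour of $N_{\tol}$ in the limit---a question that will only be addressed in the subsequent convergence analysis.
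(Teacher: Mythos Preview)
Your proposal is correct and follows essentially the same approach as the paper: establish the uniform lower bound $\tau_k \geq \tol/(2\bar c)$ from Lemma~\ref{lem:I1I2} and the halving rule, then combine the time update with the bound \eqref{sumz} to force $t_k \to \infty$. Your version is in fact more explicit than the paper's, which simply asserts the lower bound on $\tau_k$ without spelling out the induction or the termination of the inner halving loop.
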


\begin{proof}
	Due to Steps~\ref{it:refine} and \ref{it:reduc} and \eqref{e1}, \eqref{e2} 
	and thanks to the assumption in \eqref{eq:initialtau}, 	it holds for each $k\in\N$ that
	\begin{align}
		\tau_k \geq \frac{\tol}{2\,\bar c} =:\tau_{\min}, \label{taumin}
	\end{align} 
    where $\bar c > 0$ is the constant from Lemma~\ref{lem:I1I2}, which is independent of the tolerance.
	On account of the time update in Step~\ref{it:update} and estimate \eqref{sumz}, we thus obtain 
	\begin{equation}
			t_k=t_0+\sum_{i=1}^{k}\tau_i-\sum_{i=1}^{k}\|z_i-z_{i-1}\|_\V 
			\geq k\, \tau_{\min}-\sum_{i=1}^{k}\|z_i-z_{i-1}\|_\V 
			\geq k\, \tau_{\min}-C ~\rightarrow \infty, \label{T}
	\end{equation}
    as $k\to \infty$.
\end{proof}

The above result allows us to introduce the maximal value for the curve parameter by setting 
\begin{equation}\label{eq:defS}
    S := \min\{ s \in [0, \infty) : \hat t(s) = T\}.
\end{equation}

\begin{remark}\label{rem:endtime}
    It is to be noted that the proof of Proposition~\ref{endtime} does not require that $\ZZ$ and $\VV$ coincide. 
    The assertion of Proposition~\ref{endtime} thus also holds for $\ZZ\neq \VV$.
\end{remark}

In the case that all spaces are equal, we can substantially sharpen the estimates from Lemma~\ref{lem:I1I2}, 
as the next lemma shows.

\begin{lemma}\label{lem:Itau2}
    For all $k = 1, ..., N_{\tol}$, there holds
    \begin{equation*}
        I_1^k \leq C\, \tau_k^2 \quad \text{and} \quad 
        I_2^k \leq C\, \tau_k^2
    \end{equation*}
    with a constant $C>0$ independent of $\tau_k$ and the chosen tolerance.
\end{lemma}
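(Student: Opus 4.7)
The plan is to exploit the fact that, under the assumption $\ZZ = \VV$, all four spaces $\ZZ$, $\WW$, $\VV$, $\XX$ coincide with equivalent norms, so that the mismatch between the constant and affine interpolants can be controlled by $\tau_k$ rather than by a $\tau_k$-independent constant. The cornerstone estimate is the pointwise Lipschitz bound
\begin{equation*}
\|D_z\II(\hat t(s),\hat z(s))-D_z\II(\underline t(s),\bar z(s))\|_{\V^{-1}}\leq C\,\tau_k \qquad\forall\, s\in(s_{k-1},s_k),
\end{equation*}
which I would obtain by splitting $D_z\II$ according to \eqref{eq:energydef}, using $A\in\mathcal{L}(\VV,\VV^*)$ for the linear part, the mean value theorem together with \eqref{F2} and Corollary~\ref{cor:zkbound} for the $\FF$-part, and \eqref{f3} for the $f$-part, combined with the two elementary bounds $\|\hat z(s)-\bar z(s)\|_\V\leq\|z_k-z_{k-1}\|_\V\leq\tau_k$ and $|\hat t(s)-\underline t(s)|\leq t_k-t_{k-1}\leq\tau_k$ and with the equivalence of the norms involved.

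For $I_1^k$, I observe that $\hat t'$ is constant on $(s_{k-1},s_k)$. If it vanishes there, then $I_1^k=0$ trivially. Otherwise, the discrete complementarity relation \eqref{kompl} forces $\dist_{\VV^*}\{-D_z\II(\underline t,\bar z),\partial\RR(0)\}=0$ a.e.\ on the interval, so by subtracting this vanishing quantity one may rewrite
\begin{equation*}
I_1^k=\int_{s_{k-1}}^{s_k}\hat t'(s)\bigl[\dist_{\VV^*}\{-D_z\II(\hat t,\hat z),\partial\RR(0)\}-\dist_{\VV^*}\{-D_z\II(\underline t,\bar z),\partial\RR(0)\}\bigr]\,ds.
\end{equation*}
Since \eqref{eq:distdef} makes $\eta\mapsto\dist_{\VV^*}\{\eta,\partial\RR(0)\}$ $1$-Lipschitz with respect to $\|\cdot\|_{\V^{-1}}$ (as an infimum of shifted $\V^{-1}$-norms), combining this with the cornerstone estimate and $\hat t'\leq 1$ from Lemma~\ref{lem:complaffine} yields an integrand of order $\tau_k$ and hence $I_1^k\leq C\tau_k^2$.

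For $I_2^k$, I split the integrand into $r(s)$ plus $\|\hat z'(s)\|_\V$ times the difference of the two distances. The first summand is controlled directly by \eqref{rabsch}, which gives $\int_{s_{k-1}}^{s_k}r(s)\,ds\leq\bar c\,\tau_k^2$. For the second summand, the same $1$-Lipschitz property of the distance together with the cornerstone estimate and $\|\hat z'\|_\V\leq 1$ from Lemma~\ref{lem:complaffine} produce an integrand of order $\tau_k$, so that this contribution is also at most $C\tau_k^2$. I do not foresee any substantial obstacle: the whole point is that the estimates behind Lemma~\ref{lem:I1I2}, which had to trade $\ZZ$-norms of $\hat z-\bar z$ against $\VV^*$-norms of derivatives and therefore produced only an $O(1)$ pointwise bound, now collapse to $O(\tau_k)$ once all norms become equivalent.
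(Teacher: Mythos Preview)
Your proposal is correct and follows essentially the same approach as the paper's own proof: both derive the cornerstone Lipschitz bound on $D_z\II$ by splitting into the $A$-, $\FF$-, and $f$-parts (using $A\in\LL(\VV,\VV^*)$, \eqref{F2} with Corollary~\ref{cor:zkbound}, and \eqref{f3}), then combine the $1$-Lipschitz property of the distance with \eqref{kompl} for $I_1^k$ and with \eqref{rabsch} for $I_2^k$. One harmless overstatement: the assumption $\ZZ=\VV$ forces $\ZZ=\WW=\VV$ but not $\XX=\VV$ (the paper's own example has $\VV=L^2(\mu)$, $\XX=L^1(\mu)$); however, your argument never actually uses $\XX=\VV$, so this does not affect the proof.
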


\begin{proof}
    First note that, thanks to Remark~\ref{rem:time}, the construction of the affine and constant interpolants, 
    and the time update in Step~\ref{it:update}, 
	\begin{equation*}
		\|\hat{z}(s)-\bar{z}(s)\|\leq C\, \|z_k-z_{k-1}\|_\V \leq C\,\tau_k 
		\quad \text{and} \quad |\hat{t}(s)-\underline{t}(s)|\leq |t_k-t_{k-1}|\leq \tau_k,
	\end{equation*}
	holds true for all $s\in [s_{k-1}, s_k)$, whereby it follows
    \begin{equation}\label{eq:Vstarest}
	\begin{aligned}
 		& \|D_z\II(\hat{t}(s),\hat{z}(s))-D_z\II(\underline{t}(s),\bar{z}(s))\|_{\VV^*} \\
		&\qquad \leq \|A(\hat{z}(s)-\bar{z}(s))\|_{\VV^*} 
       \begin{aligned}[t]
		& +\|D_z\FF(\hat{z}(s))-D_z\FF(\bar{z}(s))\|_{\VV^*}\\
		& +\|D_zf(\hat{t}(s),\hat{z}(s))-D_zf(\underline{t}(s),\bar{z}(s))\|_{\VV^*}
		\leq C\, \tau_k,       
        \end{aligned}       		
	\end{aligned}    
    \end{equation}
	where we used \eqref{f3} and \eqref{F2}, the latter in combination with the boundedness of the 
	iterates by Corollary~\ref{cor:zkbound}.
   	If $\ZZ = \VV$, then $\dist_{\VV^*}\{\eta, \partial\RR(0)\}$ equals the usual distance of $\eta$ to the 
   	bounded set $\partial\RR(0) \subset \VV^*$ and is always finite. We denote this distance therefore just by 
   	$\abst$. Exploiting the Lipschitz-continuity of the distance then results in
    \begin{equation}\label{eq:distlip}
    \begin{aligned}
		|\abst\{-D_z\II(\hat{t}(s),\hat{z}(s)),\partial\RR(0)\}-\abst\{-D_z\II(\underline{t}(s),\bar{z}(s)),\partial\RR(0)\}| &\\
		 \leq L\, \|D_z\II(\hat{t}(s),\hat{z}(s))-D_z\II(\underline{t}(s),\bar{z}(s))\|_{\VV^*} 
		& \leq C\, \tau_k.  
    \end{aligned}
    \end{equation}
	With this estimate and \eqref{i1}, \eqref{beschr}, and \eqref{kompl} at hand, we derive
	\begin{align*}
		I_1^k   
		&=\int_{s_{k-1}}^{s_k}\hat{t}'(s)\big( \abst\{-D_z\II(\hat{t}(s),\hat{z}(s)),\partial\RR(0)\}
		- \abst\{-D_z\II(\underline{t}(s),\bar{z}(s)),\partial\RR(0)\}\big)ds\\
		&\leq \int_{s_{k-1}}^{s_k}\big|\abst\{-D_z\II(\hat{t}(s),\hat{z}(s)),\partial\RR(0)\}
		- \abst\{-D_z\II(\underline{t}(s),\bar{z}(s)),\partial\RR(0)\}\big|ds
 		\leq C\,\tau_k^2.
	\end{align*}
    Concerning $I_2^k$, we use \eqref{rabsch} in combination with \eqref{eq:distlip} and \eqref{kompl} to obtain
	\begin{align*}
		I_2^k &= \int_{s_{k-1}}^{s_k} r(s) 
		+ \|\hat{z}'(s)\|_\V\big(\abst\{-D_z\II(\hat{t}(s),\hat{z}(s)),\partial\RR(0)\}
		- \abst\{-D_z\II(\underline{t}(s),\bar{z}(s)),\partial\RR(0)\}\big)ds\\
		&\leq C\, \tau_k^2,
	\end{align*}
	which completes the proof.
\end{proof}

These improved a priori estimates allow us to give a sharper estimate of $N_{\tol}$, the maximum number of 
steps needed to reach the final time $T$. Arguing as in the proof of Proposition~\ref{endtime}
with Lemma~\ref{lem:Itau2} instead of \eqref{e1} and \eqref{e2}, we find that, 
for each iteration, 
\begin{align}
	\tau_k \geq c\, \sqrt{\tol} =:\tau_{min}. \label{taumin2}
\end{align} 
with a constant $c>0$ independent of the chosen tolerance. 
Analogously to \eqref{T}, we then find for the maximum number of iterations
\begin{equation}
    N_{\tol}\leq \bigg\lceil\frac{T+C}{\tau_{min}}\bigg\rceil=\bigg\lceil\frac{T+C}{c\sqrt{\tol}}\bigg\rceil , \label{Nbeschr}
\end{equation}
where $C>0$ is the constant from Lemma~\ref{lem:sumz}.
With this estimate at hand, we are now in the position to prove the weak* convergence of a subsequence 
of affine interpolants generated by Algorithm~\ref{alg:adaptlocmin}
to a  $\V$-parametrized BV solution, provided the tolerance tends to zero. 
For this purpose, let $\{\tol_n\}_{n\in\N}$ be an arbitrary positive sequence converging monotonously to zero. 
For each $n\in\N$, we denote the iterates of the algorithm from Step~\ref{it:stat} and \ref{it:update} 
by $(t_k^n,z_k^n)$ with associated step size $\tau_k^n$, $k=0,\dots, N_{\tol_n}$, 
and the corresponding affine and constant interpolants by $\hat{z}_n$, $\hat{t}_n$, $\bar{z}_n$, and $\underline{t}_n$. 
Moreover, we abbreviate $N_n := N_{\tol_n}$.
Finally, the maximum value for the curve parameter from \eqref{eq:defS} associated with $\tol_n$
is denoted by $S_n$ and we define 
\begin{equation}\label{eq:deftildeS}
    \tilde{S}:=\sup_{n\in\N} S_n.
\end{equation}
Due to \eqref{sumz}, we obtain
\begin{equation}\label{eq:Snbound}
	S_n \leq s^n_{N_{n}} =\sum_{i=1}^{N_{n}}\tau_i^n
	= \sum_{i=1}^{N_{n}}t_i^n-t_{i-1}^n+\|z_i^n-z_{i-1}^n\|_\V
	= t_{N_{n}}^n+ \sum_{i=1}^{N_{n}}\|z_i^n-z_{i-1}^n\|_\V \leq C,
\end{equation}
which implies the boundedness of the sequence $\{S_n\}_{n\in\N}$ and therefore $\tilde{S}$ is finite. 
Finally, we constantly extend $\hat{z}_n$, $\hat{t}_n$, $\bar{z}_n$, and $\underline{t}_n$ from 
$[0,s^n_{N_n}]$ to the interval $[0,\tilde{S}]$ and denote these constant continuations by the same symbolds.

\begin{theorem}\label{thm:convRn}
	Assume that  $\ZZ = \VV$. Then every sequence $\{\tol_n\}_{n\in\N}\subset \R^+$
	converging to zero admits a subsequence, denoted by the same symbol to ease notation, such that 
	\begin{equation}\label{eq:convRn}
        S_n\to S \mbox{~in~}\R, \quad 
        \hat{t}_n\rightharpoonup^* \hat{t} \mbox{~in~} W^{1,\infty}(0,S;\R), \quad
        \hat{z}_n\rightharpoonup^* \hat{z} \mbox{~in~} W^{1,\infty}(0,S;\VV),
	\end{equation}
	and every weak limit in the above sense is a $\V$-parametrized BV solution of the rate-independent system.
\end{theorem}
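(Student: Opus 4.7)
The plan is to extract weak* convergent subsequences of the interpolants via compactness, then to verify each property of Definition~\ref{param} by passing to the limit in the discrete analogues from Section~\ref{sec:prelim}. By Lemma~\ref{lem:complaffine} and the constant extension to $[0,\tilde S]$, the sequences $\hat t_n$ and $\hat z_n$ are $1$-Lipschitz in $\R$ and in $\VV$, respectively; together with the $L^\infty$-bound of Corollary~\ref{cor:zkbound} and the boundedness of $S_n$ from \eqref{eq:Snbound}, Arzel\`a--Ascoli (which applies since $\ZZ=\VV$ forces $\VV$ to be finite-dimensional in the present embedding chain) together with Banach--Alaoglu yield a subsequence along which $S_n\to S$, $\hat t_n\to\hat t$, $\hat z_n\to\hat z$ uniformly, and $\hat t_n'\rightharpoonup^*\hat t'$, $\hat z_n'\rightharpoonup^*\hat z'$ weakly* in $L^\infty$. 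The regularity \eqref{reg}, the boundary conditions in \eqref{start} (using $\hat t_n(S_n)=T$ and $S_n\to S$), and the inequalities in \eqref{eq:komp1} (via weak* lower semicontinuity of the norm applied to \eqref{beschr}) are then immediate.

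For the complementarity relation \eqref{komp}, I would combine the algorithmic bound $I_1^k<\tol_n$ from Step~\ref{it:refine} with the estimate $N_n\leq C/\sqrt{\tol_n}$ from \eqref{Nbeschr} to obtain
\begin{equation*}
    \int_0^{S_n}\hat t_n'(s)\,\abst\{-D_z\II(\hat t_n,\hat z_n),\partial\RR(0)\}\,ds
    =\sum_{k=1}^{N_n}I_1^k\leq C\sqrt{\tol_n}\to 0.
\end{equation*}
Since $\ZZ=\VV$, the distance $\abst\{\,\cdot\,,\partial\RR(0)\}$ is globally Lipschitz and everywhere finite on $\VV^*$, so uniform convergence of $(\hat t_n,\hat z_n)$ and the continuity of $D_z\II$ give uniform convergence of the distance factor on $[0,S]$. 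Combined with $\hat t_n'\rightharpoonup^*\hat t'$, the integral passes to $\int_0^S\hat t'\,\abst\{-D_z\II(\hat t,\hat z),\partial\RR(0)\}\,ds=0$, and non-negativity of the integrand forces \eqref{komp} almost everywhere.

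For the energy identity \eqref{energie}, I would start from the discrete identity \eqref{diskenergy} and use \eqref{kompl}, \eqref{beschr}, and the definition of $I_2^k$ in \eqref{i2} to rewrite the cumulative identity in the equivalent form
\begin{equation*}
\begin{aligned}
    &\II(\hat t_n(s),\hat z_n(s))-\II(0,z_0)-\int_0^s\partial_t\II(\hat t_n,\hat z_n)\,\hat t_n'\,d\sigma\\
    &\quad+\int_0^s\RR(\hat z_n'(\sigma))+\|\hat z_n'(\sigma)\|_\V\,\abst\{-D_z\II(\hat t_n,\hat z_n),\partial\RR(0)\}\,d\sigma
    =\sum_{s_k^n\leq s}I_2^k.
\end{aligned}
\end{equation*}
By Lemma~\ref{lem:I2pos} and the same $N_n\,\tol_n\to 0$ argument, the right hand side is non-negative and tends to zero. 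On the left, the boundary and source terms converge by uniform convergence and the continuity of $\II$; the two convex integrands pass to the $\liminf$ by weak* lower semicontinuity, the product $\|\hat z_n'\|_\V\,\abst\{\cdot\}$ being handled by splitting off the uniformly small correction $\abst_n-\abst$ and then applying weak* lower semicontinuity of $v\mapsto\int\|v\|_\V\,\abst\{-D_z\II(\hat t,\hat z),\partial\RR(0)\}\,d\sigma$ against the fixed non-negative weight. This yields the ``$\leq 0$'' inequality in \eqref{energie}; the converse inequality is automatic by Remark~\ref{rem:energyid}, giving equality.

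The one delicate step is the passage to the limit in the distance terms: it is precisely because $\ZZ=\VV$ that $\abst\{\cdot,\partial\RR(0)\}$ is Lipschitz and everywhere finite on $\VV^*$, so that strong convergence of $D_z\II(\hat t_n,\hat z_n)$ in $\VV^*$ translates into uniform convergence of the distance factors. This is exactly the point that fails in the infinite-dimensional setting of Section~\ref{sec:convinf} and forces the nesting assumption on the meshes imposed there.
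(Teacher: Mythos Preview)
Your proof follows the same skeleton as the paper's---extract subsequences via the uniform Lipschitz bounds, verify \eqref{start} and \eqref{eq:komp1} directly, and then use the crucial estimate $N_n\,\tol_n\leq C\sqrt{\tol_n}\to 0$ from \eqref{Nbeschr} to kill the accumulated residua $\sum I_1^k$ and $\sum I_2^k$. The structural argument is correct.

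Where you diverge is in the treatment of the distance terms. You invoke finite-dimensionality (which is indeed forced by the standing embedding chain $\ZZ\embed^c\WW\embed\VV=\ZZ$) to upgrade weak* to uniform convergence of $\hat z_n$, and then exploit the Lipschitz continuity of $\eta\mapsto\abst\{\eta,\partial\RR(0)\}$ on $\VV^*$ to obtain uniform convergence $\abst_n\to\abst$. This makes the passage to the limit in both \eqref{komp} and the energy inequality essentially algebraic. The paper, by contrast, never uses strong convergence of $\hat z_n$: it works only with the pointwise \emph{weak} convergence $\hat z_n(s)\rightharpoonup\hat z(s)$, invokes the weak lower semicontinuity of the distance together with \eqref{ikonv}, and closes the estimates by Fatou's lemma. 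For the distance contribution in the energy balance the paper additionally uses the crude bound $\|\hat z'\|_\V\leq 1$ from the already-established \eqref{eq:komp1}, rather than your splitting $\abst_n=(\abst_n-\abst)+\abst$. Your route is shorter and more transparent; the paper's is deliberately written to survive in the infinite-dimensional $L^2(\mu)$-type example discussed at the beginning of Section~\ref{sec:convRn}, which does not literally satisfy $\ZZ\embed^c\WW$ but for which the weak-continuity properties \eqref{ikonv}, \eqref{iuhs} still hold.

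One small technical point: your displayed ``cumulative identity'' with right-hand side $\sum_{s_k^n\leq s}I_2^k$ is only an equality when $s$ is a grid point; for general $s\in(s_{j-1}^n,s_j^n)$ there is a residual from the last partial interval. The paper sidesteps this by bounding above by the full sum $\sum_{k=1}^{N_n}(I_{1,n}^k+I_{2,n}^k)$, using the non-negativity of both $I_1^k$ and $I_2^k$ (Lemma~\ref{lem:I2pos}); you should do the same, or note that the partial-interval remainder is $O(\tau_j^2)$ by Lemma~\ref{lem:Itau2}.
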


\begin{proof}
    To some extend, the proof follows the lines of \cite[Theorem~2.5]{knees} and \cite[Theorem~3.9]{FElocmin}, 
    but, since we have to modify the arguments at some distinct places, we present the proof in detail.

    First, due to \eqref{eq:Snbound} and \eqref{beschr}, which hold for every $n\in \N$, we observe that $\{S_n\}$, 
    $\{\hat t_n\}$, and $\{\hat z_n\}$ are bounded in $\R$, $W^{1,\infty}(0,\tilde S)$, and $W^{1,\infty}(0,\tilde S;\VV)$, 
    respectively, and the existence of a weakly* converging subsequence as in \eqref{eq:convRn} follows immediately.
    Note that $S\leq \tilde S$ by definition of $\tilde S$.
    
    \vspace*{1ex}\noindent
    \textsl{(i) Initial and end time condition}
    
    To show that every weak* accumulation point is a $\V$-parametrized BV solution,  
    assume that $\{S_n, \hat t_n, \hat z_n\}_{n\in \N}$ is a sequence generated by Algorithm~\ref{alg:adaptlocmin} 
    associated with $\tol_n$ such that \eqref{eq:convRn} holds.
    Then, by compact resp.\ continuous embedding, we know that 
    \begin{equation}\label{eq:weakptwise}
        \hat t_n \to \hat t\;\text{ in } C([0,\tilde S]), 
        \quad \hat z_n(s) \weak \hat z(s) \; \text{ in }\VV \;\;\forall\, s\in [0,\tilde S]
    \end{equation}
    and consequently,
	\begin{equation}
		z_0=\hat{z}_n(0)\rightharpoonup \hat{z}(0) \; \text{ in }\VV, \quad
		0=\hat{t}_n(0)\to\hat{t}(0), \quad T=\hat{t}_n(S_n)\to\hat{t}(S) \quad \text{ for } n\to \infty,
	\end{equation}
	so that the initial and end time condition in \eqref{start} are fulfilled. 
    
    \vspace*{1ex}\noindent
    \textsl{(ii) Complementarity relation}

    First, thanks to Lemma~\ref{lem:complaffine}, for every $n\in \N$ and almost every $s\in (0,\tilde S)$, 
    there holds $0 \leq \hat t_n'(s)$ and $\hat t_n'(s) + \|\hat z_n'(s)\|_\V \leq 1$. 
    Note that this also holds f.a.a.\ $s \geq s^n_{N_n}$ because of the constant continuation we have chosen. 
    To prove that this complementarity relation transfers to the weak limit, let us define the set
	\begin{equation}\label{eq:defM}
		M:=\{(\tau,\xi)\in L^2(0,S;\R)\times L^2(0,S;\VV):\tau(s)\geq 0, ~\tau(s)+\|\xi(s)\|_\V\leq1
		\text{ a.e.\ in } (0,\tilde S)\}.	
	\end{equation}
	Clearly $M$ is convex and closed and therefore weakly closed such that  $(\hat{t}_n',\hat{z}_n')\in M$ 
	for all $n\in\N$ yields $(\hat{t}',\hat{z}')\in M$, i.e., \eqref{eq:komp1} for the weak limit.

    To shorten the notation, we use the following abbreviations in the rest of the proof:
    \begin{equation*}
        \abst(s) := \abst\{-D_z\II(\hat{t}(s),\hat{z}(s)),\partial\RR(0)\}, \;
        \abst_n(s) := \abst\{-D_z\II(\hat{t}_n(s),\hat{z}_n(s)),\partial\RR(0)\}, \; s\in [0,S].
    \end{equation*}
    
    To prove the second complementarity relation in \eqref{komp}, we first note that
    the weak* convergence of $\hat t_n'$ yields
	\begin{align*}
		0&\leq \int_0^S\hat{t}'(\sigma) \,\abst(\sigma)\,d\sigma \\
 		&= \lim_{n\to\infty} \int_0^S\hat{t}_n'(\sigma)\,\abst(\sigma)\,d\sigma \\
		&\leq \limsup_{n\to\infty}\int_0^S\hat{t}_n'(\sigma)
		\big(\abst(\sigma) - \abst_n(\sigma)\big)d\sigma 
		+\limsup_{n\to\infty}\int_0^S\hat{t}_n'(\sigma)\,\abst_n(\sigma)\, d\sigma.
	\end{align*}
    Now, thanks to the pointwise (weak) convergence in \eqref{eq:weakptwise}, 
    the weak continuity of $D_z\II$ by \eqref{ikonv}, the weak lower semicontinuity of the distance implies
    \begin{equation}\label{eq:DzIwlsc}
    \begin{aligned}
        \liminf_{n\to\infty} \abst_n(s) 
        & = \liminf_{n\to\infty} \abst\{-D_z\II(\hat{t}_n(s),\hat{z}_n(s)),\partial\RR(0)\}\\
        &\geq \abst\{-D_z\II(\hat{t}(s),\hat{z}(s)),\partial\RR(0)\} = \abst(s)  
        \quad \forall\, s\in [0,S],
    \end{aligned}
    \end{equation}
    which, due to $0 \leq \hat t'_n(s) \leq 1$ a.e.\ in $(0,S)$, in turn gives
    \begin{equation}
    	\limsup_{n\to\infty}\hat{t}_n'(s)
    	\big(\abst(s) 	- \abst_n(s)\big)\leq 0
    \end{equation}
    f.a.a.\ $s \in (0,S)$. Thus Fatou's lemma implies
	\begin{align*}
		\limsup_{n\to\infty}\int_0^S\hat{t}_n'(\sigma)
		\big(\abst(\sigma) - \abst_n(\sigma)\big)d\sigma 
		\leq \int_0^S\limsup_{n\to\infty}\big(\hat{t}_n'\big(\sigma)
		\big(\abst(\sigma) - \abst_n(\sigma)\big)d\sigma \big)d\sigma 
		\leq 0.
	\end{align*}
	For the second integral we obtain by means of our refinement criterion in Step~\ref{it:refine} that
	\begin{align*}
		\limsup_{n\to\infty}\int_0^S\hat{t}_n'(\sigma)\,\abst_n(\sigma)\,d\sigma 
		\leq \limsup_{n\to\infty}\sum_{k=1}^{N_{n}} I_{1,n}^k
		\leq  \limsup_{n\to\infty} N_{n}\tol_n 
		\leq \lim_{n\to\infty} \bigg\lceil\frac{(T+C)}{c\sqrt{\tol_{n}}}\bigg\rceil \tol_n = 0,
	\end{align*}
	where we used (\ref{Nbeschr}) for the last estimate. Note that the integrand on the left hand side vanishes 
	a.e.\ in $(s^n_{N_n}, S)$ due to constant continuation, which justifies the first inequality.	
	Hence, we have proven
	\begin{equation*}
		\int_0^S\hat{t}'(\sigma) \abst\{-D_z\II(\hat{t}(\sigma),\hat{z}(\sigma)),\partial\RR(0)\} d\sigma = 0.
	\end{equation*}
	and the non-negativity of the integrand yields that the weak limit $(\hat{t},\hat{z})$ indeed fulfills \eqref{komp}.

    \vspace*{1ex}\noindent
    \textsl{(iii) Energy identity}
    
	It remains to verify the energy identity. For this purpose, let $s\in (0,S)$ be arbitrary.
	First of all, the pointwise (weak) convergence in \eqref{eq:weakptwise} 
	together with the weak lower semicontinuity of $\II$ by \eqref{iuhs} imply
	\begin{equation}\label{eq:IIlsc}
	     \II(\hat{t}(s),\hat{z}(s)) \leq \liminf_{n\to\infty} \II(\hat{t}_n(s),\hat{z}_n(s)).
	\end{equation}
	For the energy at initial time, we have
	\begin{equation}\label{eq:IIinitial}
	    \II(\hat{t}_n(0),\hat{z}_n(0)) = \II(0, z_0) = \II(\hat{t}(0),\hat{z}(0)) \quad \forall\, n\in \N.
	\end{equation}
	Furthermore, the convexity and continuity of $\RR$ by assumption together with the weak* convergence of $\hat z'$ 
	in $L^\infty(0,S;\VV)$ yields
	\begin{equation}\label{eq:RRlsc}
		\int_0^s \RR(\hat{z}'(\sigma))d\sigma \leq \liminf_{n\to\infty} \int_0^s \RR(\hat{z}_n'(\sigma))d\sigma.
	\end{equation}
	Moreover, the pointwise (weak) convergence in \eqref{eq:weakptwise} and the assumption in \eqref{dtfkonv} yield
	\begin{equation*}
	    \partial_t f(\hat t_n(\sigma), \hat z_n(\sigma)) \to \partial_t f(\hat t(\sigma), \hat z(\sigma))
	    \quad \forall\, \sigma\in [0,S],
	\end{equation*}
	and, since this expression is bounded by \eqref{f2} and \eqref{zbeschr}, 
	Lebesgue's dominated convergence theorem implies
	that it converges strongly in $L^2(0,S)$. Together with the weak convergence of $\hat t_n'$ in $L^2(0,T)$, it follows
	\begin{equation}\label{eq:dtIIconv}
		\int_0^s\partial_t\II(\hat{t}_n(\sigma),\hat{z}_n(\sigma))\hat{t}_n'(\sigma)d\sigma 
		\to \int_0^s \partial_t\II(\hat{t}(\sigma),\hat{z}(\sigma))\hat{t}'(\sigma)d\sigma.
	\end{equation}
	In addition, as $\|\hat z'(\sigma)\|_\V \leq 1$ a.e.\ in $(0,S)$ by \eqref{eq:komp1}, 
    Fatou's lemma together with \eqref{eq:DzIwlsc} gives
	\begin{align*}
		\int_0^s\|\hat{z}'(\sigma)\|_\V \,\abst(s)\,d\sigma 
		\leq\int_0^s \abst(s)\, d\sigma 
		\leq\liminf_{n\to\infty} \int_0^s \abst_n(s)\,d\sigma.
	\end{align*}
	Since $s < S$ and $S_n\to S$, there holds $s < S_n \leq s^n_{N_n}$ for sufficiently large $n\in \N$ and thus, 
	for those $n$, the discrete energy identity from \eqref{diskenergy} holds with $a=0$ and $b=s$.
	Combining the above convergence results with the discrete energy identity leads to
	\begin{align*}
		& \II(\hat{t}(s),\hat{z}(s))-\II(\hat{t}(0),\hat{z}(0))
		-\int_{0}^{s} \partial_t\II(\hat{t}(\sigma),\hat{z}(\sigma))\hat{t}'(\sigma)d\sigma 
	    +\int_{0}^{s}\RR(\hat{z}'(\sigma)) +\|\hat{z}'(\sigma)\|_\V \,\abst(\sigma)\,d\sigma  \\
		& \leq \liminf_{n\to \infty} \II(\hat{t}_n(s),\hat{z}_n(s))
		- \lim_{n\to\infty} \Big(\II(\hat{t}_n(0),\hat{z}_n(0))
		+ \int_{0}^{s} \partial_t\II(\hat{t}_n(\sigma),\hat{z}_n(\sigma))\hat{t}_n'(\sigma)d\sigma\Big) \\
		& \qquad +\liminf_{n\to\infty}\int_0^s \RR(\hat{z}_n'(\sigma))d\sigma
		+\liminf_{n\to\infty} \int_0^s \abst_n(\sigma)\,d\sigma \\
        & \leq \limsup_{n\to\infty} \Big(
        \begin{aligned}[t]
            &\II(\hat{t}_n(s),\hat{z}_n(s)) - \II(\hat{t}_n(0),\hat{z}_n(0))
		    + \int_{0}^{s} \partial_t\II(\hat{t}_n(\sigma),\hat{z}_n(\sigma))\hat{t}_n'(\sigma)d\sigma \\
		    & + \int_0^{s} \RR(\hat{z}_n'(\sigma)) + \abst_n(\sigma)\,d\sigma \Big),
        \end{aligned} \\
        & = \limsup_{n\to\infty} 
        \int_0^s r_n(\sigma) + \abst\{-D_z\II(\hat t_n(\sigma), \hat z_n(\sigma)), \partial \RR(0)\} 
         - \abst\{-D_z\II(\underline t_n(\sigma), \bar z_n(\sigma)), \partial \RR(0)\} d \sigma\\
		&= \limsup_{n\to\infty} \sum_{k=1}^{N_{n}} \big( I^k_{1,n} + I^k_{2,n} \big)
		\leq  \limsup_{n\to\infty} 2\cdot N_{n}\tol_n 
		\leq \limsup_{n\to\infty} \bigg\lceil\frac{2(T+C)}{c\sqrt{\tol_{n}}}\bigg\rceil \tol_n = 0,
	\end{align*}
    where we used the definition of the residua and their positivity by Lemma~\ref{lem:I2pos} along with 
    \eqref{beschr} and again \eqref{Nbeschr}.
    Thus, the limit $(\hat t, \hat z)$ satisfies the energy inequality in $(0,S)$, which, by Remark~\ref{rem:energyid}, 
    is equivalent to the energy identity \eqref{energie}.
    By continuity, the energy identity holds on the whole interval $[0,S]$.
	Consequently, $(\hat{t},\hat{z})$ is indeed a $\V$-parametrized BV solution, as claimed.
\end{proof}

We point out that one looses the normalization condition in \eqref{beschr} by passing to the limit
and it might happen that the weak limit is a degenerate $\V$-parametrized BV solution. 
Nevertheless, one may cut out sets of positive measure from $(0,S)$, where 
$\hat t'(s) + \|\hat z'(s)\|_\V = 0$, without changing the curve and convert the remaining curve
into a normalized solution by means of a suitable re-parametrization, see \cite[Lemma~A.4.3]{michael}
for details.


\section{Convergence for Systems in Infinite Dimensional Spaces}\label{sec:convinf}

In this section, we turn to the case $\ZZ \neq \VV$ such that Example~\ref{ex:bspsystem} is 
covered by the convergence analysis, too.
Unfortunately, as indicated above, we need an additional assumption on the algorithm to treat this case. 

\begin{assumption}\label{assu:nested}
    Let a sequence of tolerances $\{\tol_n\}_{n\in \N}$ tending monotonically to zero be given.
    Then we assume that, for each $n\in \N$, the update of the step size in Algorithm~\ref{alg:adaptlocmin} 
    is such that the following holds true:
    \begin{enumerate}
        \item\label{it:termination} For all $n\in \N$, the end time is reached, i.e., there is an index $N_n\in \N$ such that 
        $t^n_{N_n} \geq T$.
        \item\label{it:tolest} After each iteration, the estimates of Step~\ref{it:refine} are fulfilled, i.e., 
        $I^k_{1,n} < \tol$ and $I^k_{2,n} < \tol$ for all $n\in \N$ and all $k \leq N_n$.
    \end{enumerate}
    Thanks to \eqref{it:termination}, we can again construct $S_n$ and $\tilde S$ as in \eqref{eq:defS} 
    and \eqref{eq:deftildeS}, respectively.
    We then recursively define the family $\{\bar{\tau}_n\}_{n\in \N}$ of step size functions by 
    \begin{align*}
        &\bar\tau_1: [0,\tilde{S}]\to[0,\infty), & &
        \bar\tau_1(s) := 
        \begin{cases}
            \tau_{k+1}^1, &  s\in[s_{k}^1,s_{k+1}^1),~ k=0,\dots,N_1-1 , \\
            \tau_{\max}, & s \geq s^1_{N_1},
        \end{cases}  \\
        & \bar\tau_n: [0,\tilde{S}]\to[0,\infty), \, n>1, & &
        \bar\tau_n(s) := 
        \begin{cases}
            \tau_{k+1}^n, &  s\in[s_{k}^n,s_{n+1}^1),~ k=0,\dots,N_n-1 , \\
            \displaystyle{\min_{1\leq j \leq n-1} \bar\tau_{j}(s)} , & s \geq s^n_{N_n},
        \end{cases} 
    \end{align*}  
    where $\tau_{\max}$ is the constant from Lemma~\ref{lem:taumax}.
    Given the step size functions, we additionally require that
    \begin{enumerate}
        \setcounter{enumi}{2}
        \item\label{it:nested} The sequence of step size functions is pointwise monotonically decreasing, i.e., 
        $\bar\tau_{n+1}(s) \leq \bar\tau_{n}(s)$ for all $s\in [0,\tilde S]$ and all $n\in \N$.
    \end{enumerate}
\end{assumption}

The rather intricate definition of $\bar\tau_n$ is due to the fact that all iterates ``live'' on their own interval 
$[0,s^n_{N_n}]$ and, potentially, we need to extend them to the fixed interval $[0, \tilde S]$.
Note that the construction of the extension 
guarantees the monotony of $\{\bar\tau_n\}_{n\in \N}$ in $[s^n_{N_n}, \tilde S]$.

While Assumption~\ref{assu:nested}\eqref{it:tolest} and \eqref{it:termination} are fulfilled by the 
basic version of our adaptive algorithm, cf.~Proposition~\ref{endtime} and Remark~\ref{rem:endtime},
the third assumption on the step size function need not be satisfied in general.
For this reason, the update rule for the step size from Algorithm~\ref{alg:adaptlocmin} 
has to be modified, if $\ZZ \neq \VV$, in order to ensure Assumption~\ref{assu:nested}\eqref{it:nested}.
We again underline that the results of Section~\ref{sec:prelim} remain valid for any other choice of the 
step size, as already noted in Remark~\ref{rem:tau_egal}, so we still have them at our disposal, no matter 
how the step size is chosen.

In the following, we present one possible modification of our basic algorithm that fulfills 
Assumption~\ref{assu:nested}. The principle idea is to use nested grids, where 
the set of grid points $\{s_k^{n}\}_{0 = 1, ..., N_{n}}$ 
for the tolerance $\tol_{n}$ contains all grid points $s_k^j$, $k=0, ..., N_j$, 
corresponding to all previous tolerances $\tol_j$, $j < n$. 
This is realized as follows:

\begin{algorithm}[Adaptive Scheme Producing Nested Grids]\label{alg:nested}
\ 
\begin{algorithmic}[1]
    \STATE Choose $\tol_1 > 0$ and $\tau_1 > 0$, set $n=1$.
    \LOOP
        \STATE Set $t_0^n=0$, $s_0^n=0$, $k=1$, $\sigma^n_1 = \tau_1$.
        \WHILE {$t_{k-1}^n<T$} \label{it:innerabbruch}
           \STATE Set $\tau^n_k = \sigma^n_k$\label{it:sigmagleichtau}
           \IF{$n>1$}\label{it:n1} 
                \STATE Define 
                \begin{equation}\label{eq:bars}
                    \bar s^n_{k} := 
                    \min_{1\leq j\leq  n-1}\; \min_{1 \leq \ell\leq  N_j}\{s^{j}_\ell : s^{j}_\ell > s^n_{k-1}\}
                \end{equation}  
                (with the convention $\min \emptyset = \infty$)                      
                \IF {$s^n_{k-1} + \sigma^n_k > \bar s^n_k$}
                    \STATE\label{it:taumod} Set $\tau^n_k = \bar s^n_k - s^n_{k-1}$
                \ENDIF
            \ENDIF
            \STATE\label{it:solvestat} Perform Steps~\ref{it:stat}--\ref{it:5} from Algorithm~\ref{alg:adaptlocmin} 
            to compute the constant and affine interpolants and the residua $I_{1,n}^k$ and $I_{2,n}^k$
            \IF{$I_{1,n}^k < \tol_n$ and $I_{2,n}^k < \tol_n$}\label{it:lesstol}
                \STATE Set $k=k+1$.
                \IF {$I_{1,n}^k<\frac{\tol_n}{2}$ and $I_{2,n}^k<\frac{\tol_n}{2}$}
                    \STATE\label{it:taudouble}  Set $\sigma^n_{k+1}=2\,\sigma^n_k$.
                \ENDIF
	        \ELSE
        		\STATE Set\label{it:tauhalf} $\sigma^n_k=\frac{1}{2} \,\tau^n_k$ and \textbf{go to} Step~\ref{it:sigmagleichtau}.
	        \ENDIF 
        \ENDWHILE
        \STATE Choose $\tol_{n+1} < \tol_n$, set $n = n+1$.
    \ENDLOOP
\end{algorithmic}
\end{algorithm}

It is to be noted that, as long as $s^{n-1}_{N_{n-1}} > s^n_{k-1}$, it suffices to consider $j=n-1$ 
in \eqref{eq:bars}, because the grids are nested. 
Although it is rather obvious that Algorithm~\ref{alg:nested} indeed satisfies Assumption~\ref{assu:nested}, 
it is fairly technical to prove. For this reason, we postpone the proof of the following 
proposition to Appendix~\ref{sec:nested}.

\begin{proposition}\label{prop:nested}
    Algorithm~\ref{alg:nested} satisfies Assumption~\ref{assu:nested}.
\end{proposition}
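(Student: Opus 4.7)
My plan is to verify the three conditions of Assumption~\ref{assu:nested} for Algorithm~\ref{alg:nested} in turn. Condition~(ii) is immediate from the algorithm's structure: Step~\ref{it:lesstol} increments $k$ only once the test $I^k_{1,n}<\tol_n$ and $I^k_{2,n}<\tol_n$ passes, and the halving loop in Step~\ref{it:tauhalf} must terminate after finitely many retries because Lemma~\ref{lem:I1I2} guarantees $I^k_{1,n},I^k_{2,n}\leq \bar c\,\tau^n_k$, which eventually drops below $\tol_n$. Condition~(i), termination of the outer loop, then follows along the lines of Proposition~\ref{endtime}: in iterations without alignment, the doubling rule in Step~\ref{it:taudouble} together with the argument behind \eqref{taumin} yields $\tau^n_k = \sigma^n_k\geq c\,\tol_n$ for a constant $c>0$, and alignment-produced steps (Step~\ref{it:taumod}) can occur only finitely often, since after such a step $s^n_k$ lies on a previous grid point in $\Pi_{\leq n-1}:=\bigcup_{j<n}\{s^j_0,\ldots,s^j_{N_j}\}$, a finite set. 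Combined with \eqref{sumz} and the time update in Step~\ref{it:update}, the iterate $t^n_k$ reaches $T$ after finitely many iterations.

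The core of the proof is Condition~(iii). I would first establish a refinement property: for every $n$ and every $j<n$, setting $\Pi_n := \{s^n_0,\ldots,s^n_{N_n}\}$, one has
\begin{equation*}
    \Pi_j\cap[0,s^n_{N_n}]\subset \Pi_n.
\end{equation*}
This is a direct consequence of the alignment mechanism in \eqref{eq:bars} and Step~\ref{it:taumod}: at each iteration $k$ of run $n$, $\bar s^n_k$ is by construction the smallest element of $\Pi_{\leq n-1}$ strictly larger than $s^n_{k-1}$, while the truncation ensures $s^n_k\leq \bar s^n_k$. Therefore $(s^n_{k-1},s^n_k)\cap\Pi_{\leq n-1}=\emptyset$, and an induction on $k$ shows that any point of $\Pi_{\leq n-1}$ lying in $[0,s^n_{N_n}]$ must coincide with some $s^n_k$.

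With the refinement property at hand, I would prove $\bar\tau_{n+1}(s)\leq\bar\tau_n(s)$ by induction on $n$, distinguishing three cases for a fixed $s\in[0,\tilde S]$. If $s\geq s^{n+1}_{N_{n+1}}$, the claim is immediate from the definition of $\bar\tau_{n+1}$ as a minimum over $\bar\tau_1,\ldots,\bar\tau_n$. Otherwise $s\in[s^{n+1}_{\ell-1},s^{n+1}_\ell)$ and $\bar\tau_{n+1}(s)=s^{n+1}_\ell-s^{n+1}_{\ell-1}$. If additionally $s<s^n_{N_n}$, locate $s\in[s^n_{k-1},s^n_k)$; the refinement property applied to the pair $(n,n+1)$ yields $s^n_{k-1},s^n_k\in\Pi_{n+1}$, whence $[s^{n+1}_{\ell-1},s^{n+1}_\ell]\subset[s^n_{k-1},s^n_k]$ and $\bar\tau_{n+1}(s)\leq s^n_k-s^n_{k-1}=\tau^n_k=\bar\tau_n(s)$. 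The remaining case $s^n_{N_n}\leq s<s^{n+1}_{N_{n+1}}$ reduces, via the induction hypothesis (which gives $\bar\tau_n(s)=\bar\tau_{j^*}(s)$ for the largest index $j^*<n$ with $s<s^{j^*}_{N_{j^*}}$, or $\bar\tau_n(s)=\tau_{\max}$ if no such $j^*$ exists), to applying the same refinement-based comparison to the pair $(j^*,n+1)$. I expect the bookkeeping of this last case to be the main technical obstacle, since the recursive extension of $\bar\tau_n$ beyond the range of run $n$'s grid must be handled carefully to avoid circular arguments; once the refinement property is in place, however, the monotonicity ultimately reduces to a simple containment of sub-intervals.
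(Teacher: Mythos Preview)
Your proposal is correct, and the overall strategy---verify the three items of Assumption~\ref{assu:nested} separately---matches the paper's. The difference is one of emphasis. The paper (Appendix~\ref{sec:nested}) dispatches conditions~\eqref{it:tolest} and~\eqref{it:nested} in a single sentence as ``fulfilled by construction of the algorithm'' and devotes all its effort to an inductive proof of termination~\eqref{it:termination}: it shows that $\sigma^{n+1}_k\geq\tau^{n+1}_{\min}$ always holds (since Step~\ref{it:tauhalf} is reached only when the test fails, which by Lemma~\ref{lem:I1I2} forces $\tau^{n+1}_k\geq\tol_{n+1}/\bar c$), so that a step size below $\tau^{n+1}_{\min}$ can arise only via Step~\ref{it:taumod} with immediate acceptance, hence at most $N_n$ times; the estimate~\eqref{T} then gives termination. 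You invert this: your treatment of~\eqref{it:termination} is a sketch in the same spirit, while you supply the refinement property $\Pi_j\cap[0,s^n_{N_n}]\subset\Pi_n$ and the case analysis that rigorously justify the monotonicity~\eqref{it:nested} the paper leaves implicit. Your argument for~\eqref{it:nested} is thus precisely the content behind the paper's ``by construction'', and your sketch for~\eqref{it:termination} is the paper's induction in compressed form; each approach fills in what the other glosses over.
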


Now, we are in the position to turn to our main convergence result.
As in Section~\ref{sec:convRn}, we again consider a sequence $\{\tol_n\}_{n\in \N}$ tending 
monotonically to zero and run an adaptive algorithm with that sequence, 
that fulfills Assumption~\ref{assu:nested} (not necessarily Algorithm~\ref{alg:nested}).
We use the same notation for the associated sequence of interpolants etc.\ as in Section \ref{sec:convRn}, 
i.e., e.g., $\hat{z}_n$, $\hat{t}_n$, and so on.
Moreover, we again extend the affine and constant interpolants beyond $s^n_{N_n}$
by constant continuation and denoted these extensions by the same symbols.

\begin{theorem}\label{konvergenz}
    Let $\{\tol_n\}_{n\in \N}$ be a sequence converging monotonically to zero and assume that a step size 
    update is used such that Assumption~\ref{assu:nested} is fulfilled.
    Then there exists a subsequence such that 
	\begin{alignat}{3}
		S_{n_k} & \to S  & \quad & \text{in }\R \label{S} \\
		\hat{t}_{n_k} & \rightharpoonup^* \hat{t} & &\text{in } W^{1,\infty}(0,S;\R) \label{t} \\
		\hat{z}_{n_k} &\rightharpoonup^* \hat{z} & & \text{in } W^{1,\infty}(0,S;\VV)\cap H^1(0,S;\ZZ) \label{z} 
	\end{alignat}
	and the limit $(\hat{t},\hat{z})$ is a $\V$-parametrized BV solution of the rate-independent system. 
	Furthermore, every accumulation point of $\{(S_n,\hat{t}_n,\hat{z}_n)\}_{n\in \N}$ 
	with respect to the above convergence is a $\V$-parametrized BV solution.
\end{theorem}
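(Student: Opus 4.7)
My plan is to mirror the three-step structure of the proof of Theorem~\ref{thm:convRn}, namely compactness, initial/end time conditions plus first complementarity, and energy identity plus second complementarity, explaining where and how the nested-grid hypothesis (Assumption~\ref{assu:nested}) takes over from the Lipschitz estimate~\eqref{eq:distlip} that is no longer at our disposal when $\ZZ\neq\VV$.

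First, I would extract the subsequences \eqref{S}--\eqref{z}. The $W^{1,\infty}$-bounds for $\hat t_n$ and $\hat z_n$ come from Lemma~\ref{lem:complaffine}, the $H^1(0,\tilde S;\ZZ)$-bound from Lemma~\ref{tzbeschr}, and the boundedness of $\{S_n\}$ from~\eqref{eq:Snbound}. Banach--Alaoglu yields the stated weak$^*$ convergences, and Aubin--Lions (using $\ZZ\embed^c\WW$) upgrades them to strong convergence $\hat z_n\to\hat z$ in $L^2(0,\tilde S;\WW)$ and, on a further subsequence, to pointwise weak convergence $\hat z_n(s)\rightharpoonup\hat z(s)$ in $\ZZ$ for a.e.\ $s$. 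The initial and end time conditions in~\eqref{start} are then obtained by pointwise evaluation, and the first half~\eqref{eq:komp1} of the complementarity relation follows from the weak closedness of the convex set $M$ defined as in~\eqref{eq:defM}, exactly as in steps (i)--(ii) of the proof of Theorem~\ref{thm:convRn}.

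The heart of the proof is the energy identity. By Remark~\ref{rem:energyid}, it is enough to verify the energy inequality~\eqref{eq:enerineq}. Starting from the discrete identity~\eqref{diskenergy}, rewriting the distance integrand as $\|\hat z_n'\|_\V\,\dist_{\VV^*}\{-D_z\II(\underline t_n,\bar z_n),\partial\RR(0)\}$ (allowed by~\eqref{beschr} and~\eqref{kompl}), and taking $\liminf_n$, the boundary terms, the $\RR$-term, and the $\partial_t\II$-term are handled as in Theorem~\ref{thm:convRn} by means of~\eqref{iuhs}, convexity of $\RR$, \eqref{dtfkonv}, and Lebesgue's dominated convergence theorem. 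For the distance term I would combine Fatou's lemma, the weak continuity~\eqref{ikonv} of $D_z\II$ applied at the affine interpolants, and the weak lower semicontinuity of $\eta\mapsto\dist_{\VV^*}\{\eta,\partial\RR(0)\}$; the substitution of $(\underline t_n,\bar z_n)$ by $(\hat t_n,\hat z_n)$ inside the distance requires the strong convergence $\bar z_n-\hat z_n\to 0$ in $L^2(0,\tilde S;\WW)$, which together with~\eqref{F2} and~\eqref{f3} produces the desired defect bound.

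The main obstacle is to justify this substitution and to show that the residuum satisfies $\int_0^s r_n\,d\sigma\to 0$. In the $\R^n$ case both were consequences of $N_n\tol_n\le C\sqrt{\tol_n}\to 0$, which rested on the quadratic bound $I^k\le C\tau_k^2$ of Lemma~\ref{lem:Itau2}; here only the linear bound $I^k\le\bar c\tau_k$ of Lemma~\ref{lem:I1I2} holds, so that $N_n\tol_n$ is merely bounded. This is precisely where the nested-grid hypothesis enters: Assumption~\ref{assu:nested}\eqref{it:nested} renders the step-size functions $\bar\tau_n$ pointwise non-increasing, hence convergent to a monotone limit $\bar\tau_\infty\in[0,\tau_{\max}]$. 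Combined with the pointwise estimate $|r_n(s)|\le\bar c\,\bar\tau_n(s)$ that follows from~\eqref{rabsch} and with the dominated convergence theorem, the desired conclusions reduce to showing that $\bar\tau_\infty=0$ almost everywhere on the ``viscous'' part $\{\hat z'\ne 0\}$ of $[0,S]$; if instead $\bar\tau_n$ were bounded below by some $c>0$ on a positive-measure set while $\tol_n\to 0$, the acceptance criterion in Step~\ref{it:lesstol} together with the stationarity relations of Lemma~\ref{lem:stat}, in particular~\eqref{eig2}, would force $\|z_k^n-z_{k-1}^n\|_\V\to 0$ on the corresponding intervals and hence $\hat z_n'\to 0$ there, contradicting $\hat z'\ne 0$. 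Once the energy inequality (and hence identity) is established, the second complementarity relation~\eqref{komp} is read off from the identity in its viscous differential inclusion form~\eqref{eq:viscouseqlimit}; finally, the ``every accumulation point'' statement is immediate since the arguments depend on the chosen subsequence only through the modes of convergence~\eqref{S}--\eqref{z}.
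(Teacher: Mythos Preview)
Your compactness step and the handling of the initial/end conditions and \eqref{eq:komp1} are fine and match the paper. The core of your argument, however, has a genuine gap.

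Your plan hinges on showing that $\bar\tau_\infty=0$ a.e.\ on $\{\hat z'\neq 0\}$, and then using $|r_n(s)|\le \bar c\,\bar\tau_n(s)$ with dominated convergence. Both ingredients fail. First, \eqref{rabsch} is a \emph{one-sided} bound $r_n(s)\le \bar c\,\tau_k$; the paper explicitly notes (see the remark after Theorem~\ref{konvergenz}) that no estimate for $|r_n|$ is available because the $A$-term has the ``right'' sign only one way. Second, your argument that the acceptance criterion together with \eqref{eig2} forces $\|z_k^n-z_{k-1}^n\|_\V\to 0$ on intervals with $\tau_k^n\ge c>0$ breaks down in \emph{rate-independent slip}: there $\lambda_k=0$, the discrete distance $\overline{\abst}_n$ vanishes by \eqref{eig2}, and the residua $I^k_{1,n},I^k_{2,n}$ can be arbitrarily small while $\|z_k^n-z_{k-1}^n\|_\V$ stays bounded away from zero. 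So $\bar\tau_\infty>0$ on a set where $\hat z'\neq 0$ is perfectly possible. Relatedly, your ``substitution'' of $(\underline t_n,\bar z_n)$ by $(\hat t_n,\hat z_n)$ inside $\dist_{\VV^*}$ via \eqref{F2} and \eqref{f3} ignores the $A$-contribution $A(\hat z_n-\bar z_n)$, which is only in $\ZZ^*$, not $\VV^*$; this is precisely the obstruction that prevents a Lipschitz estimate like \eqref{eq:distlip} when $\ZZ\neq\VV$. Finally, \eqref{komp} cannot simply be ``read off'' from the energy identity via \eqref{eq:viscouseqlimit}: that pointwise inclusion is a \emph{consequence} of \eqref{komp} plus \eqref{energie}, not a substitute for proving \eqref{komp}.

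The paper's device that replaces all of this is a three-way splitting of $(0,s)$ according to the \emph{limit} step-size function $\bar\tau$ (which exists by Assumption~\ref{assu:nested}\eqref{it:nested}): $G_0=\{\bar\tau=0\}$, $G_{<\delta}=\{0<\bar\tau<\delta\}$, $G_{\geq\delta}=\{\bar\tau\ge\delta\}$. On $G_0$ the constant and affine interpolants share the same pointwise weak limit in $\ZZ$, so one can apply weak lower semicontinuity of $\dist_{\VV^*}$ directly to $(\underline t_n,\bar z_n)$ without any substitution; the one-sided bound \eqref{rabsch} and Fatou then dispose of $\int_{G_0} r_n$. On $G_{\geq\delta}$ there are at most $\lceil\tilde S/\delta\rceil$ intervals, so $\sum_{k\in M^n_\delta}(I^k_{1,n}+I^k_{2,n})\le 2\lceil\tilde S/\delta\rceil\tol_n\to 0$; this is how the tolerance is actually used, and it does \emph{not} require $\bar\tau_\infty=0$ anywhere. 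The middle set contributes $C|G_{<\delta}|\to 0$ as $\delta\searrow 0$. The same splitting handles both \eqref{komp} (proved directly, before the energy identity) and the energy inequality.
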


\begin{proof}
    The proof is in principle similar to the one of Theorem~\ref{thm:convRn}, but, since 
    we do not have the tightened bounds from Lemma~\ref{lem:Itau2} at hand if $\ZZ\neq \VV$, 
    we need to substantially modify the derivation of the complementarity relation and the energy identity for 
    the weak limit. We again divide the proof in several steps.

    \vspace*{1ex}
    \noindent    
    \textsl{(i) Boundedness and existence of converging subsequences}
    
    Since Lemma~\ref{lem:sumz} does not depend on the choice of the step size, 
    \eqref{eq:Snbound} remains true for the modified algorithm, too, such that $\tilde S$ is finite.
    Owing to the constant continuation we have chosen, the bounds from 
    Lemmas~\ref{lem:complaffine} and \ref{tzbeschr} transfer to $(0,\tilde S)$ and hence, there is 
    a constant $C>0$ with
    \begin{equation*}
        S_n \leq \tilde S, \quad 
        \|\hat t'\|_{L^\infty(0,\tilde S)} \leq C, \quad
        \|\hat z'\|_{L^\infty(0,\tilde S;\VV)} \leq C, \quad
        \|\hat z\|_{H^1(0,\tilde S; \ZZ)} \leq C.
    \end{equation*}
    Consequently there is a subsequence such that \eqref{S}--\eqref{z} holds true.

    \vspace*{1ex}
    \noindent    
    \textsl{(ii) Initial and end time condition} 
    
    In order to show that every accumulation point in the sense of \eqref{S}--\eqref{z} is a
    $\V$-parametrized BV solution, let us consider an arbitrary sequence $\{(S_n,\hat{t}_n,\hat{z}_n)\}_{n\in \N}$ 
    such that 
    \begin{equation}\label{eq:convTF}
        S_{n} \to S   \;\text{ in }\R, \quad
		\hat{t}_{n} \rightharpoonup^* \hat{t} \;\text{ in } W^{1,\infty}(0,S;\R) , \quad 
		\hat{z}_{n} \rightharpoonup^* \hat{z}  \;\text{ in } W^{1,\infty}(0,S;\VV)\cap H^1(0,S;\ZZ) .
    \end{equation}
    First we note that the linearity and continuity of $C([0,S];\ZZ) \ni z \mapsto z(s) \in \ZZ$, $s\in [0,S]$, 
    along with the continuous embedding $H^1(0, S; \ZZ) \embed C([0,S];\ZZ)$ yields the following pointwise 
    convergence
    \begin{equation}
		\hat{z}_{n}(s)  \rightharpoonup \hat{z}(s) \quad \text{in }\ZZ \mbox{~for all } s\in [0,S]. \label{punkt}    
    \end{equation}
    In addition, weak* convergence in $W^{1,\infty}(0,S)$ implies that $\hat t_n$ converges uniformly in $[0,S]$.
    With these pointwise convergences at hand, we can argue as in part (i) of the proof of Theorem~\ref{thm:convRn} 
    to see that the weak limit satisfies the initial and end time condition.

    \vspace*{1ex}
    \noindent    
    \textsl{(iii) Complementarity relation} 

    Concerning the first complementarity relation in \eqref{eq:komp1}, we again make use of the set $M$ 
    as defined in \eqref{eq:defM}. Since its weak closedness is completely independent of the space $\ZZ$, 
    the weak* convergence in \eqref{eq:convTF} together with Lemma~\ref{lem:complaffine} 
    yields $(\hat t', \hat z') \in M$ so that the weak limit indeed satisfies \eqref{eq:komp1}.
    
    For convenience, we use the following abbreviations in the rest of the proof:
    \begin{equation*}
    \begin{aligned}
        \widehat{\abst}(s) &:= \dist_{\VV^*}\{-D_z\II(\hat{t}(s),\hat{z}(s)),\partial\RR(0)\},\\
        \widehat{\abst}_n(s) &:= \dist_{\VV^*}\{-D_z\II(\hat{t}_n(s),\hat{z}_n(s)),\partial\RR(0)\},\\
        \overline{\abst}_n(s) &:= \dist_{\VV^*}\{-D_z\II(\underline{t}_n(s),\bar{z}_n(s)),\partial\RR(0)\},
    \end{aligned}
    \end{equation*}
    where $s\in [0,S]$ and $\underline{t}_n$ and $\bar z_n$ again denote the constant interpolants.
    
    To prove \eqref{komp}, let $s\in(0,S)$ and $\delta > 0$ be arbitrary. 
    Since $s< S$ and $S_n \to S$, there holds $s < S_n \leq s^n_{N_n}$ for $n\in \N$ sufficiently large. 
    Thus, for all $n$ large enough, the results of Section~\ref{sec:prelim} hold true
    and we do not have to care about the behavior of the discrete solution and its constant continuation, 
    respectively, beyond $s^n_{N_n}$, which 
    will be frequently used in the following. Since we are interested in the passage to the limit 
    $n\to \infty$ anyway, we thus tacitly assume that $n$ is sufficiently large 
    such that $s < s^n_{N_n}$ for the rest of the proof. 
    By Assumption~\ref{assu:nested}\eqref{it:nested}, the step size function $\bar{\tau}_n$ 
    converges pointwise to a measurable function $\bar{\tau}: [0,S] \to [0,\infty)$. 
    For this limit function, we define the sets
	\begin{equation}\label{eq:defG}
	\begin{aligned}
		G_0 & :=\{\sigma\in(0,s):\bar{\tau}(\sigma)=0\}, \\
		G_{<\delta} & :=\{\sigma\in(0,s):0<\bar{\tau}(\sigma)<\delta\}, \\
		G_{\geq\delta} & :=\{\sigma\in(0,s):\bar{\tau}(\sigma)\geq\delta\}.	
	\end{aligned}
	\end{equation}
    Note that $G_0\cup G_{<\delta}\cup G_{\geq\delta}=(0,s)$. 
	From the weak*-convergence of $\hat{t}_n'$ we deduce
    \begin{equation}\label{eq:complsplit}
    \begin{aligned}
		0&\leq \int_{0}^s\hat{t}'(\sigma)\,\widehat{\abst}(\sigma)\, d\sigma\\
		&=\lim_{n\to\infty} \int_{G_{0}}\hat{t}_n'(\sigma)
		\,\widehat{\abst}(\sigma)\,d\sigma
		+\lim_{n\to\infty} \int_{G_{<\delta}}\hat{t}_n'(\sigma)
		\,\widehat{\abst}(\sigma)\,d\sigma
		+\lim_{n\to\infty} \int_{G_{\geq\delta}}\hat{t}_n'(\sigma)
		\,\widehat{\abst}(\sigma)\,d\sigma\\
		& =: I_0 + I_{<\delta} + I_{\geq\delta}.    
    \end{aligned}        
    \end{equation}
    Next we will pass to the limit with the three integrals separately and start with $I_0$.
	For this reason, let $\sigma\in G_0$ be arbitrary. For every $n\in\N$, 
	there exists an interval $[s_{k}^n,s_{k+1}^n)$ such that $\sigma\in[s_{k}^n,s_{k+1}^n)$, 
	where the respective $k$ of course depends on $n$, but we suppress this dependency to ease notation. 
	The pointwise convergence $\bar{\tau}_n(\sigma)\to 0$ by the definition of $G_0$ 
	implies that the associated step size $\tau_{k+1}^n=s_{k+1}^n-s_{k}^n$ equals zero in the limit $n\to\infty$, too. 
	The construction of the interpolants along with Lemma~\ref{lem:complaffine} therefore implies
	\begin{gather}
		\|\hat{z}_n(\sigma)-\bar{z}_n(\sigma)\|_\V\leq|\sigma-s_k^n| \,\|\hat{z}_n'(\sigma)\|_\V
		\leq \tau_{k+1}^n\to 0, \label{zintabsch} \\
		|\hat{t}_n(\sigma)-\underline{t}_n(\sigma)|\leq|\sigma-s_{k+1}^n| \,\hat{t}_n'(\sigma)\leq \tau_{k+1}^n \to0
	\end{gather}
	for $n\to\infty$. Now \eqref{punkt} 
	together with the compactness of the embedding $\ZZ\hookrightarrow\VV$ implies the strong convergence 
	$\hat{z}_n(\sigma)\to\hat{z}(\sigma)$ in $\VV$. Because of \eqref{zbeschr}, 
	the constant interpolant $\bar{z}_n(\sigma)$ is bounded in $\ZZ$, 
	which yields the existence of a weak limit in $\ZZ$ for a subsequence. 
	Again, the compactness $\ZZ\hookrightarrow\VV$ implies strong convergence in $\VV$ 
	and \eqref{zintabsch} gives that the weak limit of $\bar{z}_n(\sigma)$ equals $\hat{z}(\sigma)$.
    Since $\sigma \in G_0$ was arbitrary, we have proven that
	the constant and affine interpolants have the same pointwise weak limit in $G_0$. 
	For the time interpolants one can argue in exactly the same way to show that 
	$\underline{t}(\sigma)\to \hat t(\sigma)$ for all $\sigma\in G_0$. 
	Therefore, we deduce from \eqref{ikonv}
	\begin{equation*}
		D_z\II(\underline{t}_n(\sigma),\bar{z}_n(\sigma))
		\weak D_z\II(\hat{t}(\sigma),\hat{z}(\sigma)) \quad \text{in }\ZZ^* \quad \text{for all }\sigma\in G_0,
	\end{equation*}
	which together with the weak lower semicontinuity of $\dist$ by Lemma~\ref{distuhs} in the appendix gives
	\begin{equation}
	\begin{aligned}
	    \widehat{\abst}(\sigma) 
	    &= \dist_{\VV^*}\{-D_z\II(\hat{t}(\sigma),\hat{z}(\sigma)),\partial\RR(0)\} \\
		&\leq \liminf_{n\to\infty} \dist_{\VV^*}\{-D_z\II(\underline{t}_n(\sigma),\bar{z}_n(s)),\partial\RR(0)\} 
        = \liminf_{n\to\infty} \overline{\abst}_n(\sigma)
		\label{distG0}	
	\end{aligned}
	\end{equation}
    for all $\sigma\in G_0$. Due to $0 \leq \hat t_n'(\sigma) \leq 1$ a.e.\ in $(0,\tilde S)$ and \eqref{kompl}, 
    this in turn implies
	\begin{equation*}
        \limsup_{n\to\infty}\hat{t}_n'(\sigma)\,\widehat{\abst}(\sigma)
        = \limsup_{n\to\infty}\hat{t}_n'(\sigma)
        \big(\widehat{\abst}(\sigma) -  \overline{\abst}_n(\sigma)\big)\leq 0
        \quad \text{a.e.\ in } G_0.
	\end{equation*}
	Hence, Fatou's lemma implies for the first limit
	\begin{equation}\label{eq:estI0}
		I_0 \leq
		\int_{G_{0}} \limsup_{n\to\infty}\hat{t}_n'(\sigma)\,\widehat{\abst}(\sigma)\,d\sigma\leq 0.
	\end{equation}

    Next, we turn to the second integral. Similarly to \eqref{distG0}, the weak lower semicontinuity 
    of $\dist_{\VV^*}$ in combination with \eqref{eq:convTF} and \eqref{ikonv} yields  
	\begin{equation}
	\begin{aligned}
    	\widehat{\abst}(\sigma)
		&= \dist_{\VV^*}\{-D_z\II(\hat{t}(\sigma),\hat{z}(\sigma)),\partial\RR(0)\} \\
		&\leq \liminf_{n\to\infty} \dist_{\VV^*}\{-D_z\II(\hat{t}_n(\sigma),\hat{z}_n(\sigma)),\partial\RR(0)\}  
		=  \liminf_{n\to\infty} \widehat{\abst}_n(\sigma) \label{distGdelta}	
	\end{aligned}
	\end{equation}
	for all $\sigma\in G_{<\delta}\cup G_{\geq\delta}$. 
    Therefore, thanks to \eqref{beschr} and \eqref{distbeschr2}, the integrands in $I_{<\delta}$ are bounded 
    by a constant $C>0$ independent of $n$ and $\delta$ and we obtain
	\begin{equation}\label{eq:estIless}
		I_{< \delta} = \lim_{n\to\infty} \int_{G_{<\delta}}\hat{t}_n'(\sigma)\,\widehat{\abst}(\sigma)\,d\sigma
		\leq C\,|G_{<\delta}|.
	\end{equation}

    For the last limit we obtain 
    \begin{equation}\label{eq:estIgreater}
		I_{\geq \delta}
		\leq \limsup_{n\to\infty}\int_{G_{\geq\delta}}\hat{t}_n'(\sigma)
		\big(\widehat{\abst}(\sigma)-\widehat{\abst}_n(\sigma)\big) d\sigma 
		+\limsup_{n\to\infty}\int_{G_{\geq\delta}}\hat{t}_n'(\sigma)\,\widehat{\abst}_n(\sigma)\,d\sigma.
    \end{equation}
    Similarly to above, it follows from \eqref{distGdelta} and $\hat t_n' \in [0,1]$ a.e.\ in $(0,\tilde S)$ that, 
    for all $\sigma\in G_{\geq\delta}$,
	\begin{equation*}
		\limsup_{n\to\infty}\hat{t}_n'(\sigma)
		\big(\widehat{\abst}(\sigma)-\widehat{\abst}_n(\sigma)\big)\leq 0
	\end{equation*}
	so that, again by Fatou's lemma, we can estimate the first integral by
    \begin{equation}\label{eq:estIg1}
    \begin{aligned}
		\limsup_{n\to\infty}\int_{G_{\geq\delta}}\hat{t}_n'(\sigma)
		\big(\widehat{\abst}(\sigma)-\widehat{\abst}_n(\sigma)\big)d\sigma 
		\leq 0.
	\end{aligned}    
    \end{equation}
	To estimate the second integral, we introduce the set
	\begin{equation}\label{eq:defMk}
		M_\delta^n:=\big\{k\in \{0, ..., N_n-1\}:\tau_{k+1}^n=s_{k+1}^n-s_k^n\geq \delta\big\}, \quad n\in\N. 
	\end{equation}
	It describes the parts, where the algorithm uses step sizes larger than $\delta>0$. 
	Since $S_n$ is bounded by $\tilde{S}$, 
	$M_\delta^n$ has less than $\lceil\frac{\tilde{S}}{\delta}\rceil$ elements for every $n\in\N$.
	This together with our first residuum $I^k_{1,n}$ and the condition in Step~\ref{it:lesstol} allows us to estimate
    \begin{equation}\label{eq:estIg2}
    \begin{aligned}
		\limsup_{n\to\infty}\int_{G_{\geq\delta}}\hat{t}_n'(\sigma)\,\widehat{\abst}_n(\sigma) \,d\sigma 
		&\leq \limsup_{n\to\infty}\sum_{k\in M_{\delta}^{n}}\int_{s_{k}^n}^{s_{k+1}^n}\hat{t}_n'(\sigma)
		\,\widehat{\abst}_n(\sigma)\, d\sigma \\
		&\leq \limsup_{n\to\infty} \bigg\lceil\frac{\tilde{S}}{\delta}\bigg\rceil \tol_n =  0,    
    \end{aligned}        
    \end{equation}
	where we made use of the non-negativity of the above integrand 
	and the pointwise monotony of the meshsize functions $\bar{\tau}_n$, which leads to 
	$G_{\geq\delta}\subset \bigcup_{k\in M_{\delta}^{n}} [s_{k}^n,s_{k+1}^n]$ 
	for every $n\in\N$.

	Finally combining \eqref{eq:estI0}, and \eqref{eq:estIless}--\eqref{eq:estIg2} with \eqref{eq:complsplit} leads to 
	\begin{equation*}
		0 \leq \int_{0}^s\hat{t}'(\sigma)\, \widehat{\abst}(\sigma)\, d\sigma\leq C\, |G_{<\delta}|.
	\end{equation*}
	Since $\delta$ was arbitrary, this holds for every $\delta > 0$. From the definition of the set $G_{<\delta}$ in 
	\eqref{eq:defG}, it follows that $|G_{<\delta}|\to 0$ as $\delta \searrow 0$.
    Together with the non-negativity of the integrand and the arbitrariness of $s\in (0,S)$, this yields 
	\begin{align*}
		\hat{t}'(\sigma)\dist_{\VV^*}\{-D_z\II(\hat{t}(\sigma),\hat{z}(\sigma)),\partial\RR(0)\}=0
		\quad \text{a.e.\ in } (0,S),
  	\end{align*}
    which is the desired complementarity relation in \eqref{komp}.

    \vspace*{1ex}
    \noindent    
    \textsl{(iv) Energy identity} 
    
    To prove the energy equality, let again $s\in (0,S)$ and $\delta > 0$ be arbitrary and define the sets
    $G_0$, $G_{<\delta}$, and $G_{\geq \delta}$ as above.
    We again assume w.l.o.g.\ that $n\in \N$ is so large that $s< s^n_{N_n}$.
	We consider each part in the energy equality individually and, except for the integral involving the distance,
    all parts can be discussed in the same way as in the case $\VV = \ZZ$.
    To be more precise, in view of the uniform convergence of  $\hat t_n$, the pointwise weak convergence 
    in \eqref{punkt} (here in $\ZZ$ and not only in $\VV$), the weak convergence of 
    $\hat t_n'$ and $\hat z'_n$, and our standing assumptions on energy and dissipation 
    give that \eqref{eq:IIlsc}--\eqref{eq:dtIIconv} from the proof of Theorem~\ref{thm:convRn} also hold 
    in case $\VV\neq \ZZ$.
	 
	 Concerning the distance term, we exploit \eqref{distG0}, \eqref{distGdelta}, 
	 $\|\hat{z}'(\sigma)\|_\V\leq1$ by \eqref{eq:komp1}, and Fatou's lemma in order to obtain
	\begin{align*}
		\int_0^s\|\hat{z}'(\sigma)\|_\V \,\widehat{\abst}(\sigma)\,d\sigma 
		&\leq \int_{G_{0}}\widehat{\abst}(\sigma)\,d\sigma
		+\int_{G_{<\delta}\cup G_{\geq\delta}}\,	\widehat{\abst}(\sigma)\, d\sigma\\
		&\leq \int_{G_{0}}\liminf_{n\to\infty}  \overline{\abst}_n(\sigma) \,d\sigma 
		+\int_{G_{<\delta}\cup G_{\geq\delta}}	\liminf_{n\to\infty}  \widehat{\abst}_n(\sigma)\, d\sigma \\
		&\leq \liminf_{n\to\infty} 
		\Big(\int_{G_{0}} \overline{\abst}_n(\sigma) \,d\sigma 
	    + \int_{G_{<\delta}\cup G_{\geq\delta}}\widehat{\abst}_n(\sigma)\, d\sigma \Big).
	\end{align*}
    Combining this with \eqref{eq:IIlsc}--\eqref{eq:dtIIconv} leads to
	\begin{align*}
 		&\II(\hat{t}(s),\hat{z}(s))-\II(\hat{t}(0),\hat{z}(0))
 		-\int_{0}^{s} \partial_t\II(\hat{t}(\sigma),\hat{z}(\sigma))\hat{t}'(\sigma)d\sigma 
		+\int_{0}^{s}\RR(\hat{z}'(\sigma))+\|\hat{z}'(\sigma)\|_\V\, \widehat{\abst}(\sigma)\, d\sigma   \\
		&\leq \liminf_{n\to \infty}\Big(\II(\hat{t}_n(s),\hat{z}_n(s))-\II(\hat{t}_n(0),\hat{z}_n(0))
		-\int_{0}^{s} \partial_t\II(\hat{t}_n(\sigma),\hat{z}_n(\sigma))\hat{t}_n'(\sigma)d\sigma 
		+\int_0^s \RR(\hat{z}_n'(\sigma))d\sigma\Big)\\
		&\quad +\liminf_{n\to\infty} 
        \Big( \int_{G_{0}} \overline{\abst}_n(\sigma) \,d\sigma 
        + \int_{G_{<\delta}\cup G_{\geq\delta}} \widehat{\abst}_n(\sigma)\, d\sigma \Big)\\
		& \leq \limsup_{n\to\infty}
        \begin{aligned}[t]
    		\bigg( &\II(\hat{t}_n(s),\hat{z}_n(s))-\II(\hat{t}_n(0),\hat{z}_n(0))+\int_{0}^{s}\big(\RR(\hat{z}_n'(\sigma))
	    	+\overline{\abst}_n(\sigma) \big) d\sigma  \\
		    &-\int_{0}^{s} \partial_t\II(\hat{t}_n(\sigma),\hat{z}_n(\sigma))\hat{t}_n'(\sigma)d\sigma
		    +\int_{G_{<\delta}\cup G_{\geq\delta}} 
		    \big( \widehat{\abst}_n(\sigma) -\overline{\abst}_n(\sigma)  \big) d\sigma\bigg)
		\end{aligned}\\
		&= \limsup_{n\to\infty}\bigg(\int_0^s r_n(\sigma) d\sigma 
       +\int_{G_{<\delta}\cup G_{\geq\delta}} 
       \big( \widehat{\abst}_n(\sigma) -\overline{\abst}_n(\sigma)  \big) d\sigma\bigg)\\
		&\leq 
		\begin{aligned}[t]
	        \limsup_{n\to\infty}\int_{G_{0}} r_n(\sigma) d\sigma
		     & +\limsup_{n\to\infty}\int_{G_{<\delta}} 
  		    \big(r_n(\sigma) + \widehat{\abst}_n(\sigma) -\overline{\abst}_n(\sigma)  \big) d\sigma \\
		    & +\limsup_{n\to\infty}\int_{G_{\geq\delta}} 
		    \big(r_n(\sigma) + \widehat{\abst}_n(\sigma) -\overline{\abst}_n(\sigma)  \big) d\sigma
		    \end{aligned}\\
	    &=: J_0 + J_{<\delta} + J_{\geq \delta},
	\end{align*}
	where we made use of the discrete energy equality \eqref{diskenergy}.
	For the first limit, we obtain due to the definition of $G_0$, \eqref{rabsch}, and Fatou's lemma
	\begin{equation}
        J_0 \leq \int_{G_{0}} \limsup_{n\to\infty}r_n(\sigma) d\sigma \leq 0.
	\end{equation}
    Owing to \eqref{rabsch}, \eqref{distbeschr}, \eqref{distbeschr2}, and the constant continuation, 
    we see that the integrand in $J_{<\delta}$ is pointwise bounded from above by a constant $C>0$ independent of $n$.
	Thus, we obtain
	\begin{equation}
		J_{<\delta}\leq C\, |G_{<\delta}|.
	\end{equation}
	For the last limit, we use \eqref{beschr}, \eqref{kompl}, 
	and the definition of $M_\delta^n$ in \eqref{eq:defMk} in order to estimate
	\begin{align*}
        J_{\geq \delta}
		& = \limsup_{n\to\infty}
   		 \int_{ G_{\geq\delta}} 
        \Big[ r_n(\sigma) + \|\hat{z}_n'(\sigma)\|_\V
        \big( \widehat{\abst}_n(\sigma) -\overline{\abst}_n(\sigma)  \big)
	     + \hat{t}_n'(\sigma) \, \overline{\abst}_n(\sigma) \Big] d\sigma \\
		& \leq \limsup_{n\to\infty}\sum_{k\in M_{\delta}^{n}}
   		\int_{s_{k}^{n}}^{s_{k+1}^{n}}
        \Big[ r_n(\sigma) + \|\hat{z}_n'(\sigma)\|_\V
	     \big( \widehat{\abst}_n(\sigma) -\overline{\abst}_n(\sigma)  \big)
		 + \hat{t}_n'(\sigma) \,\overline{\abst}_n(\sigma) \Big] d\sigma \\
		 & = \limsup_{n\to\infty}\sum_{k\in M_{\delta}^{n}} I^k_{1,n} + I^k_{2,n}
		 \leq \limsup_{n\to\infty}2\bigg\lceil\frac{\tilde{S}}{\delta}\bigg\rceil \tol_n = 0,
	\end{align*}
   where the first inequality follows from the non-negativity of the integrand by Lemma~\ref{lem:I2pos}. 
    The last inequality follows analogously to \eqref{eq:estIg2} from the conditions on the residua
   in Step~\ref{it:lesstol} of the algorithm.
	
   All in all, we have proven that 
 	\begin{align*}
		&\II(\hat{t}(s),\hat{z}(s))-\II(\hat{t}(0),\hat{z}(0))
		-\int_{0}^{s} \partial_t\II(\hat{t}(\sigma),\hat{z}(\sigma))\hat{t}'(\sigma)d\sigma \\
		&\qquad +\int_{0}^{s}\RR(\hat{z}'(\sigma))
		+\|\hat{z}'(\sigma)\|_\V \,\dist_{\VV^*}\{-D_z\II(\hat{t}(\sigma),\hat{z}(\sigma)),\partial\RR(0)\}d\sigma
		\leq C\, |G_{<\delta}| \to 0 \quad\text{as } \delta \searrow 0
	\end{align*}
    such that $(\hat t, \hat z)$ fulfills an energy inequality in $(0,S)$. By continuity, this inequality 
    carries over to the whole interval $[0,S]$.
    From Remark~\ref{rem:energyid}, we know that this inequality is equivalent to the energy identity
    and, since $s\in [0,S]$ was arbitrary, we see that $(\hat t, \hat z)$ indeed satisfies \eqref{energie}, 
    which finally gives that the weak limit is a $\V$-parametrized BV solution.
\end{proof}

\begin{remark}
    Let us shortly explain why the additional Assumption~\ref{assu:nested}\eqref{it:nested} is needed.
    Unfortunately, we were not able to prove a result analogous to Lemma~\ref{lem:Itau2} in case of $\VV\neq \ZZ$. 
    In essence, the reason is that an estimate of the form \eqref{eq:Vstarest} does not hold, if the $\ZZ$-norm 
    and the $\VV$-norm are not equivalent. Even an estimate of the form $\|z_k - z_{k-1}\|_{\ZZ} \leq c \,\tau_k$
    would not suffice to prove \eqref{eq:Vstarest}, since $A$ maps $\ZZ$ to $\ZZ^*$ and not to $\VV^*$, which is 
    needed to estimate the distance term in \eqref{eq:distlip}.
    At this point, the distance-terms in $I^1_k$ and $I^2_k$ behave very differently compared to the $r$-term 
    from \eqref{eq:rdef}. In case of $r$, the critical term involving $A$ has the ``right'' sign, see \eqref{eq:rest}
    in Appendix~\ref{sec:I1I2}, so that the lack of regularity does not play a role here. 
    Note that, for this reason, we can only derive an estimate for $r$ and not for $|r|$, which luckily suffices for the 
    convergence analysis.
    To circumvent these regularity issues, we need the additional assumption of monotonically decreasing 
    step sizes. It essentially allows us to go without the sharpened estimate for the number of steps in 
    \eqref{Nbeschr} that is an immediate consequence of Lemma~\ref{lem:Itau2}.
\end{remark}


\section{Numerical Experiments}\label{sec:numerics}

In the following we test our adaptive algorithm by means of two examples.
The stationarity conditions in \eqref{alg1} are solved by means of a damped version of
the semi-smooth Newton method from \cite[Section~4]{FElocmin}.
For the evaluation of the residua in \eqref{i1} and \eqref{i2}, we use 
a composite Simpson's rule.

\subsection{A One Dimensonal Example}\label{sec:ex1d}

We start our investigations with an example, where $\VV = \ZZ = \R$. For the energy in \eqref{eq:energydef}, 
we choose 
\begin{equation*}
    \dual{Az}{z} := z^2, \quad 
    \FF \equiv 0, \quad
    f(t,z) := - \ell(t) \, z + 
    \begin{cases}
        4\,z + 8, ~& z\leq-2,\\
        4 - z^2,~&|z|<2,\\ 
        -4\,z + 8,~&z\geq2  
    \end{cases}
\end{equation*}
where $\ell$ is the external driving force, which is set to $\ell(t) := t+1$.
The end time is set to $T=5$.
It is easily seen that the standing assumptions in \eqref{F1}--\eqref{dtfkonv} are fulfilled with 
$c=6$, $\mu = 1$, and $\nu = 2$. By direct calculations one moreover verifies that 
\begin{equation}
	\hat{t}(s) := \begin{cases}s,~&s\in[0,2], \\2,~&s\in(2,10],\\(s-6)/2,~&s\in(10,16] \end{cases} 
	\quad \text{and}\quad 
	\hat{z}(s) := \begin{cases} -2,~&s\in[0,2],\\s-4,~&s\in(2,10],\\ (s+2)/2,~&s\in(10,16]\end{cases} \label{ex1s}
\end{equation}
is a $\V$-parametrized BV solution in the sense of Definition~\ref{param}. 
The solution is shown in Figure~\ref{fig:1Dexakt}.

\begin{figure}[h]
\centering
\begin{tikzpicture}[scale=0.8]
\draw[->] node[left]{\footnotesize 0} (0,0) -- (8.5,0) node[right] {$s$};
\draw[->] (0,-1.5) -- (0,5) node[left] {};
\draw[dashed,very thin] (1,-1.5) -- (1,5);
\draw[dashed,very thin] (5,-1.5) -- (5,5);
\draw[dashed,very thin] (8,-1.5) -- (8,5);
\draw[dashed,very thin] (8.5,1) -- (0,1) node[left]{\footnotesize 2};
\draw[dashed,very thin] (8.5,3) -- (0,3) node[left]{\footnotesize 6};
\draw[color=red, domain=0:1, line width=1pt] plot (\x,{\x}) node[right] {};
\draw[color=red, domain=1:5, line width=1pt] plot (\x,{1}) node[right] {};
\draw[color=red, domain=5:8, line width=1pt] plot (\x,{0.5*\x-1.5)}) node[right] {$\hat t$};
\draw[color=blue, domain=0:1, line width=1pt] plot (\x,{-1}) node[right] {};
\draw[color=blue, domain=1:5, line width=1pt] plot (\x,{\x-2}) node[right] {};
\draw[color=blue, domain=5:8, line width=1pt] plot (\x,{0.5*\x+0.5)}) node[right] {$\hat z$};
\draw (-0.3, -1) node{\footnotesize -2};
\draw (0.85, -0.2) node{\footnotesize 2};
\draw (4.75, -0.2) node{\footnotesize 10};
\draw (7.75, -0.2) node{\footnotesize 16};
\end{tikzpicture}
\caption{$\V$-parametrized BV solution from \eqref{ex1s}}\label{fig:1Dexakt}
\end{figure}
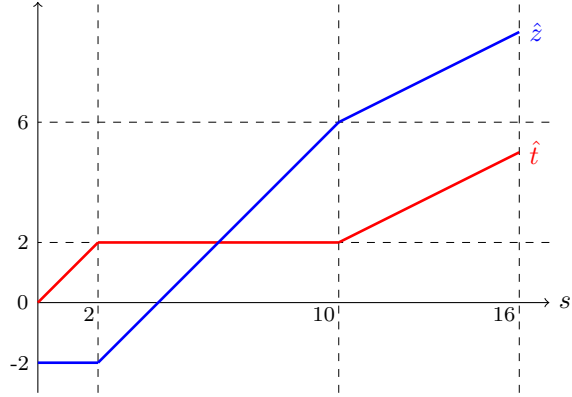
We observe that the state $\hat z$ is constant until $s = 2$. This \emph{sticking} behavior 
is typical for rate-independent processes and occurs, if the external loading is too small 
to change the system state such that the dissipation forces the system state to remain constant.
At $s = 2$, the system changes to a different regime. Now the physical time remains constant, 
while the system state starts to change and thus, we observe a discontinuous behavior of the system. 
In the majority of cases, the state is no more locally stable then, i.e., 
$-D_z\II(\hat t(s), \hat z(s)) \notin \partial\RR(0)$ and thus $\lambda$ from \eqref{eq:lambdadef} 
will be different from zero so that an additional viscous term in \eqref{eq:viscouseqlimit} appears. 
This gives rise to the notion of a \emph{viscous jump}. 
There may be discontinuities where the system state is still locally stable, but these situations are rather 
pathological as \cite[Example~2.3.5]{michael} demonstrates.
The viscous jump lasts until $s=10$ and afterwards, both $\hat t$ and $\hat z$ are both changing, 
a regime which is known as \emph{rate-independent slip}.

Figure~\ref{fig:1D} shows the result of the adaptive algorithm for two different values of the tolerance $\tol$.
\begin{figure}[h]
\begin{subfigure}[c]{0.49\textwidth}
    \centering
 	\includegraphics[width=0.9\textwidth]{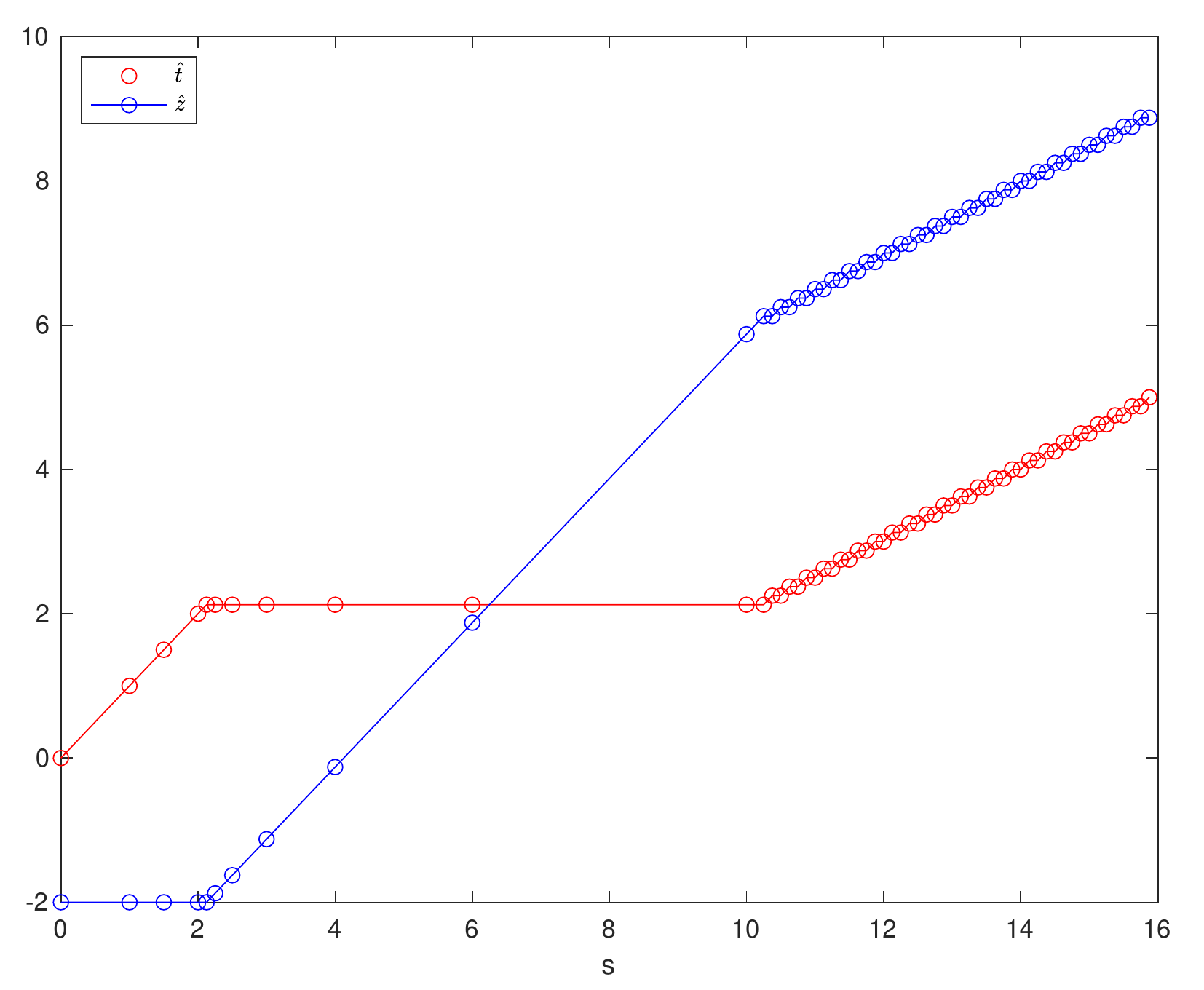}
    \subcaption{$\hat t$ and $\hat z$ for $\tol = 10^{-2}$}
\end{subfigure}
\begin{subfigure}[c]{0.49\textwidth}
    \centering
 	\includegraphics[width=0.9\textwidth]{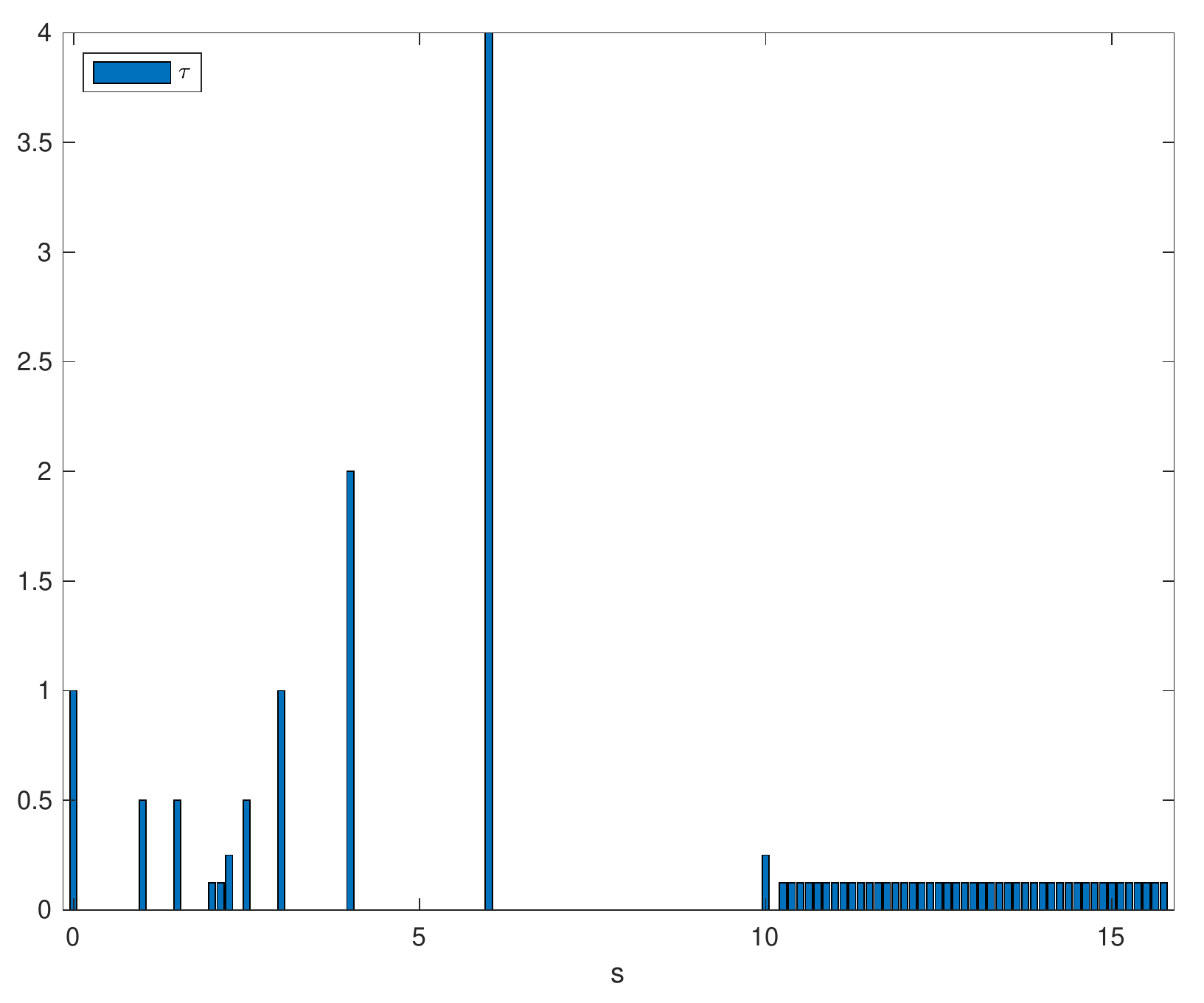}
    \subcaption{$\tau_k$ for $\tol = 10^{-2}$}
\end{subfigure}
\begin{subfigure}[c]{0.49\textwidth}
    \centering
 	\includegraphics[width=0.9\textwidth]{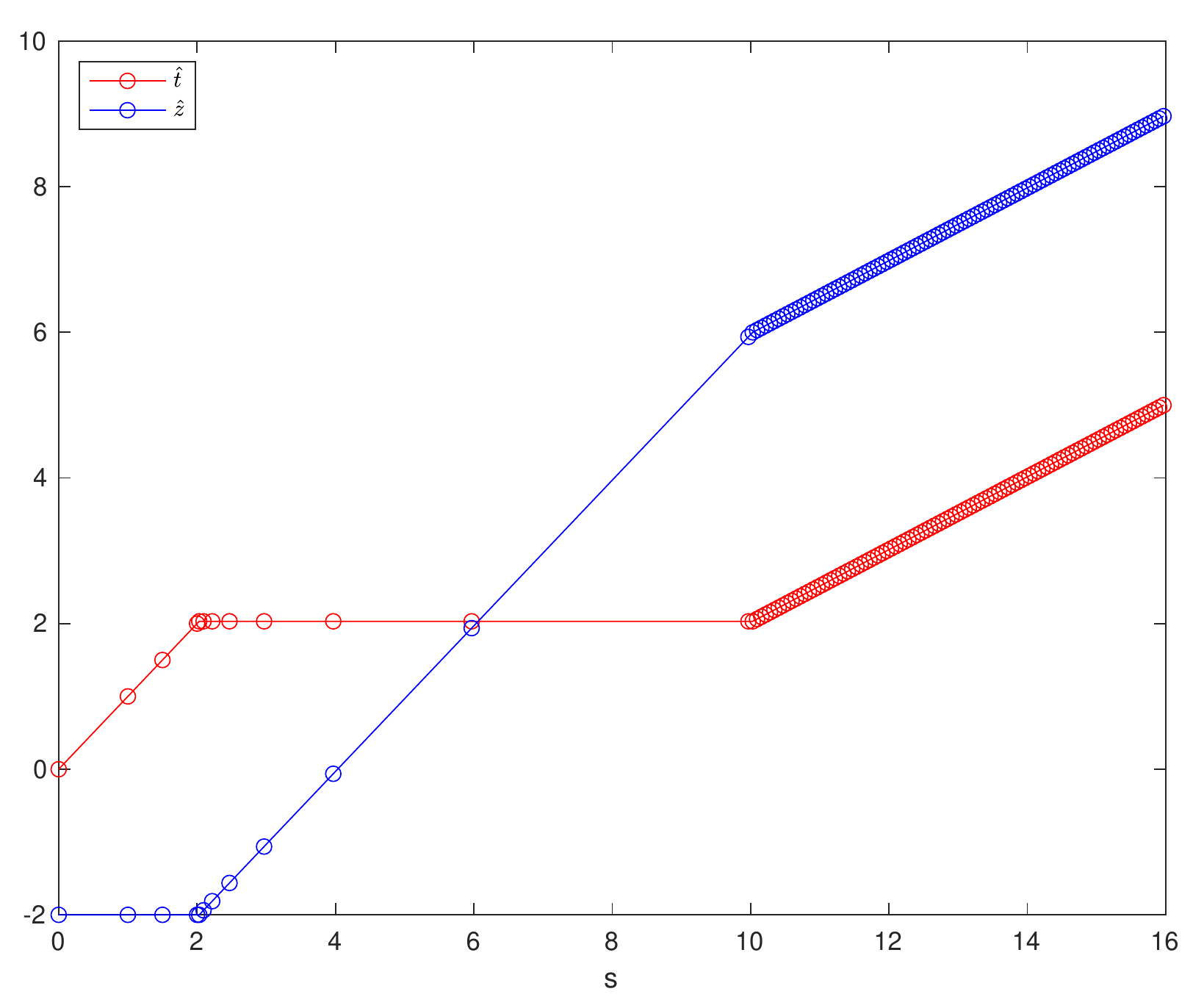}
    \subcaption{$\hat t$ and $\hat z$ for $\tol = 10^{-3}$}
\end{subfigure}
\begin{subfigure}[c]{0.49\textwidth}
    \centering
 	\includegraphics[width=0.9\textwidth]{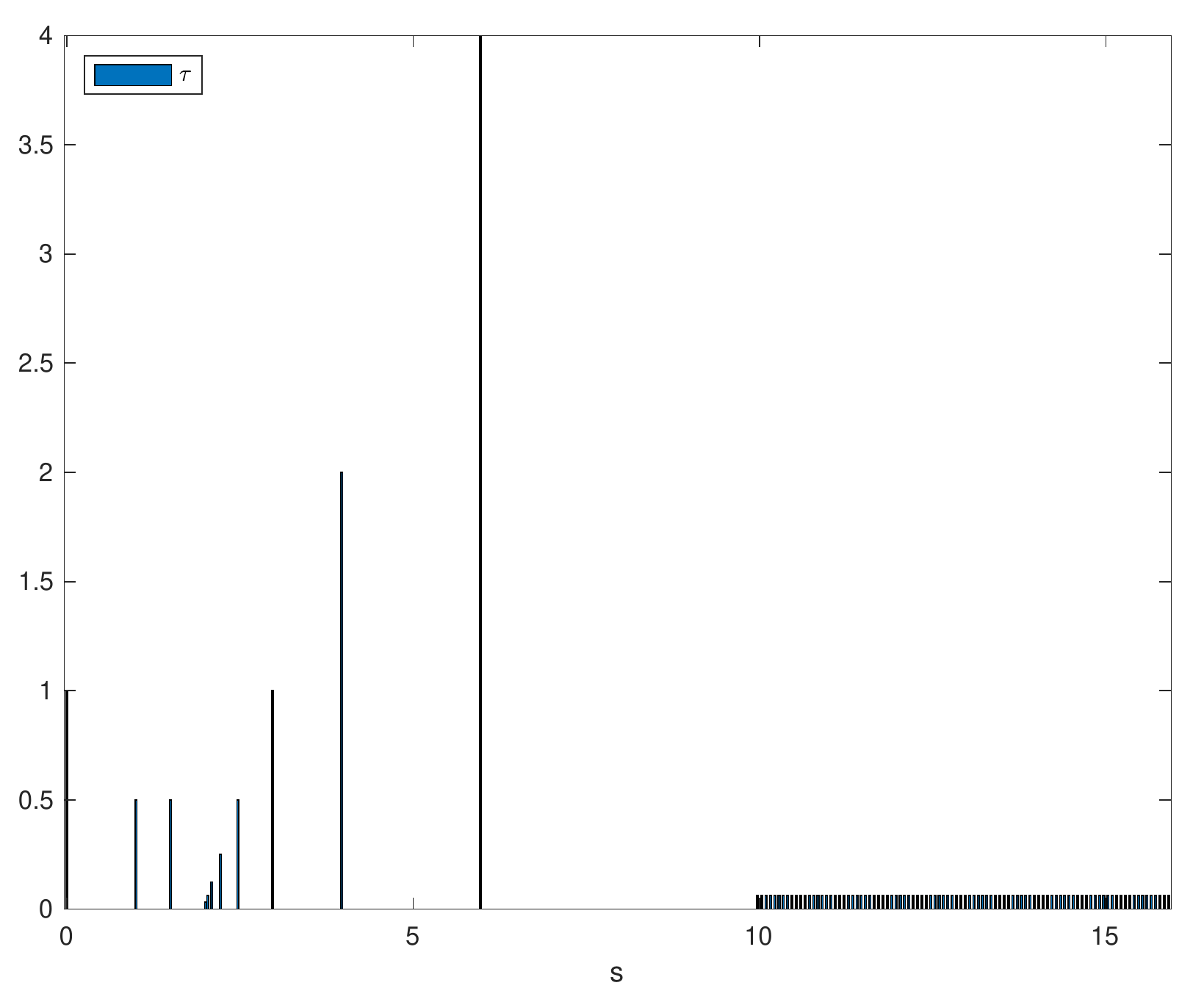}
    \subcaption{$\tau_k$ for $\tol = 10^{-3}$}
\end{subfigure}
\caption{Affine interpolants and step size for different tolerances}\label{fig:1D}
\end{figure}
We observe significantly larger step sizes during sticking and in a viscous jump accompanied with 
a refined solution of the break points at the transition between different regimes.
This is explained as follows:

\vspace*{1ex}
\noindent
\textsl{(i) Sticking:}\\
Assume that Step~\ref{it:stat} delivers $z_k = z_{k-1}$, i.e., the driving force in $t_{k-1}$ 
is so small that it is compensated by the dissipation and the system state does not change. 
Then, by Step~\ref{it:update}, the physical time is set to $t_k = t_{k-1} + \tau_k$ such that 
$\hat t'(s) = 1$ and $\hat z'(s) = 0$ for all $s\in [s_{k-1}, s_k)$. It thus holds
\begin{equation*}
    I^k_2 = 0 ,\quad 
    I^k_1 = \int_{s_k-1}^{s_k} \abst\{-D_z\II(\hat{t}(s),z_k),\partial\RR(0)\} ds .
\end{equation*}

Therefore, if the variations of the driving force $(s_{k-1}, s_k) \ni s \mapsto\ell(\hat t(s))$ are not too large, then 
the state $z_k$ remains locally stable on the whole interval, i.e., 
$-D_z\II(\hat t(s), z_k) \in \partial\RR(0)$ f.a.a.\ $s\in (s_{k-1}, s_k)$, and $I^k_1$ equals $0$, too.
The iteration is then accepted and the step size is doubled. 

\vspace*{1ex}
\noindent
\textsl{(ii) Viscous jump:}\\
Suppose that, in iteration $k\in \N$, the inequality constraint is active, i.e., 
$\|z_{k-1} - z_k\|_{\V} = \tau_k$. Then, by Step~\ref{it:update}, 
$\hat t'(s)=0$ and $\|\hat z'(s)\|_\V = 1$ for all $s\in [s_{k-1}, s_k)$ and hence
\begin{equation*}
    I^k_1 = 0 , \quad 
    I^k_2 = \int_{s_{k-1}}^{s_k}
    \big[\dual{D_z\II(\hat{t}(s),\hat{z}(s)}{\hat{z}'(s)} 
    +  \abst\{-D_z\II(\hat{t}(s),\hat{z}(s)),\partial\RR(0)\} + \RR(\hat z'(s)) \big] ds,
\end{equation*}
where we used \eqref{eig3} along with the positive homogeneity of $\RR$.
Now, the inf-convolution-formula gives for the distance
\begin{equation*}
    \abst\{\eta ,\partial\RR(0)\} 
    = \frac{1}{\tau_k} \,(\RR + I_{\tau_k})^*(\eta)
    = \sup_{\|v\|_{\V}\leq 1} \big( \dual{\eta}{v} - \RR(v)\big),
\end{equation*}
cf., e.g., \cite[Lemma~C.2]{FElocmin}.
In one dimension and $\V = 1$, it thus follows
\begin{equation*}
    \abst\{\eta ,\partial\RR(0)\} = \max\{ \eta - \RR(1),\,  - \eta - \RR(-1),\, 0 \} .
\end{equation*}
Therefore, $\hat z'(s) = \pm 1$ must only have the right sign and $I^k_2$ will vanish, too, 
such that the iteration is successful and the step size is doubled.
In multiple dimensions, this easy argument does of course not work, but, nonetheless, 
we frequently observe an increase of the step size in a viscous jump there, too, as the next example 
in Section~\ref{sec:sobolev} shows.

The aforementioned advantage of the adaptive scheme, i.e., 
the larger step size during \emph{sticking} and \emph{viscous jumps}, 
results in less iterations in order to approximate a solution 
with the same accuracy in comparison to an algorithm with fixed step size.
This is illustrated in Figure \ref{fig:steps}, where the difference between the numerical result 
(computed with fixed and adaptive step size) and the exact solution from \eqref{ex1s} measured in the 
$L^2$- and the $L^\infty$-norm is shown.
We observe that the qualitative behavior of the error is the same, but   
approximately only a quarter of iterations is needed in the adaptive case. 
\begin{figure}[h]
\begin{subfigure}[c]{0.49\textwidth}
    \centering
 	\includegraphics[width=\textwidth]{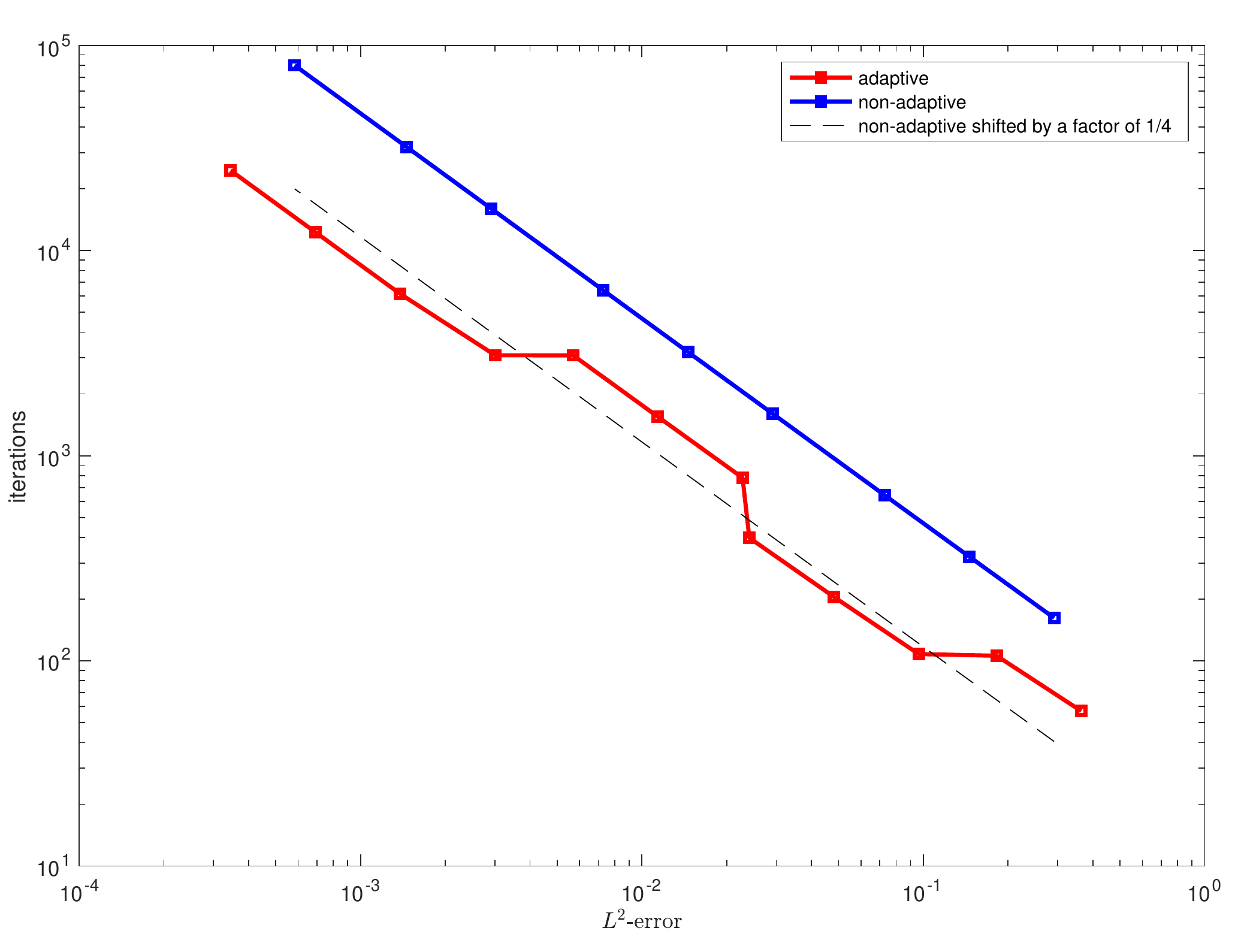}
\end{subfigure}
\begin{subfigure}[c]{0.49\textwidth}
    \centering
 	\includegraphics[width=\textwidth]{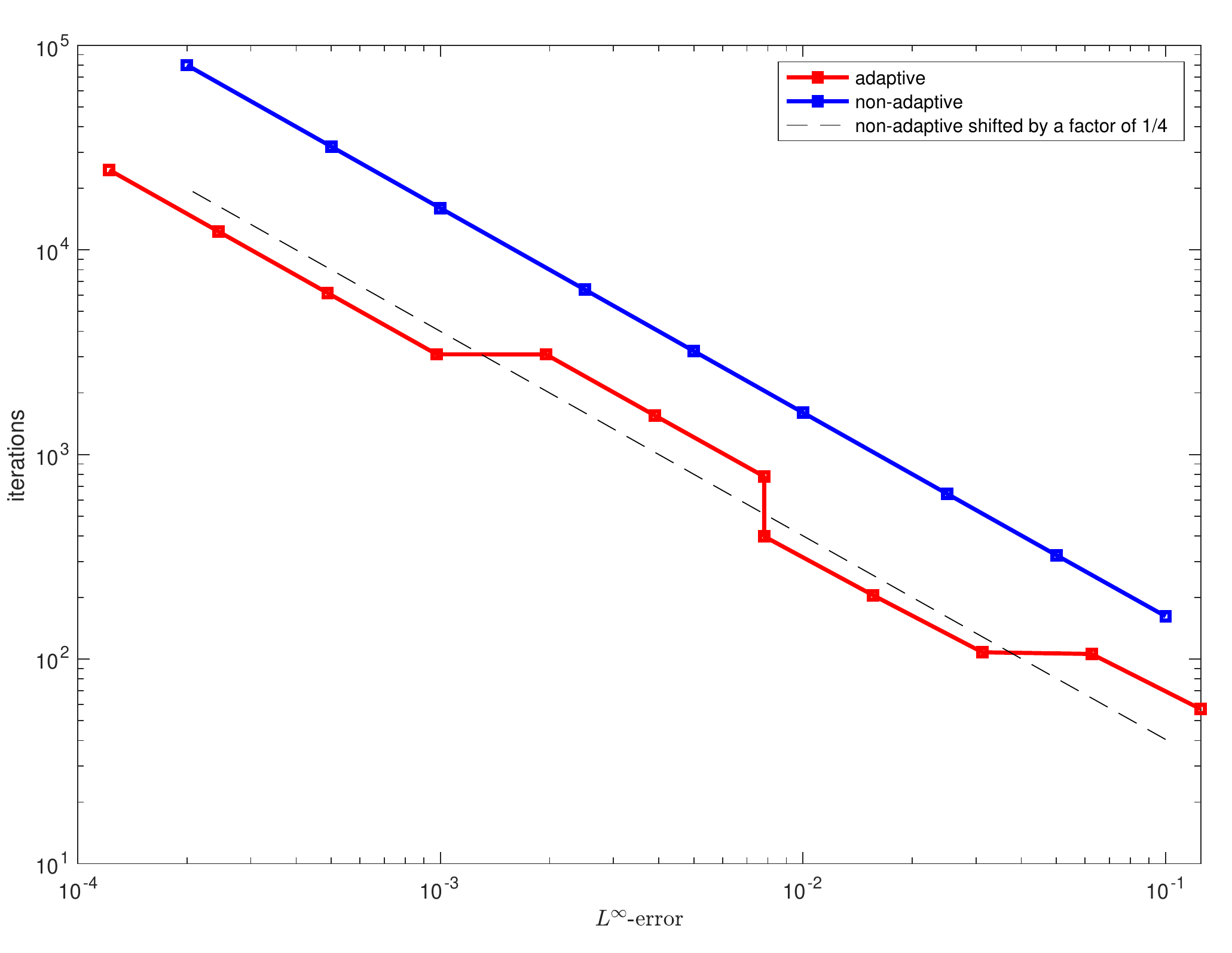}
\end{subfigure}
\caption{Comparison of the number of iterations between the adaptive and the non-adaptive scheme
in the ODE example}\label{fig:steps}
\end{figure}

However, it is to be noted that the error between ``the'' solution and its approximation is in general 
no meaningful quantity, since solutions are in general not unique and 
we can only guarantee that subsequences converge to $\V$-parametrized  BV solutions as seen in Theorem \ref{konvergenz}. It may therefore well happen that, for different tolerances,
different solutions are approximated as the next example shows.

\subsection{An Example in Sobolev Space}\label{sec:sobolev}

This example addresses a problem in function space, where $\VV\neq \ZZ$. 
To be more precise, we consider the setting from Example~\ref{ex:bspsystem} with 
\begin{equation*}
    T=1, \quad \Omega := (0,1)^2, \quad \V = \textup{id} : L^2(\Omega) \to L^2(\Omega).
\end{equation*}
Moreover, we set 
\begin{equation}\label{eq:doublewell}
    g : \R \to \R, \quad g(z) := 48 (1-z^2)^2
\end{equation}
(where the nonlinear function and the Nemyzki operator are denoted by the same symbol
with a slight abuse of notation). The associated Nemyzki operator is considered as a mapping
from $H^1_0(\Omega) \hookrightarrow L^8(\Omega)$ to $L^2(\Omega)$ and it is easily verified
that \eqref{eq:gsecond} is satisfied with $p = 8$ and $q = 2$. 
Furthermore, the external load from \eqref{eq:loadex} is set to
\begin{equation*}
    \langle \ell(t),z\rangle_{\VV^*,\VV}
    = - 200 \int_\Omega t^3 \, z(x)\,dx, \quad (t,z)\in [0,T]\times H_0^1(\Omega).
\end{equation*}
For the spatial discretization, we use standard linear finite elements with 1681 nodes along with the mass lumping 
approach for the $L^1$-norm as described in \cite[Section~4]{FElocmin}.

The result of Algorithm~\ref{alg:adaptlocmin} for the tolerances $\tol = 10^{-4}$, $\tol = 10^{-5}$ and $\tol = 10^{-6}$ is shown in 
Figure~\eqref{fig:pde}.
\begin{figure}[h]
\begin{subfigure}[c]{0.49\textwidth}
    \centering
 	\includegraphics[width=0.9\textwidth]{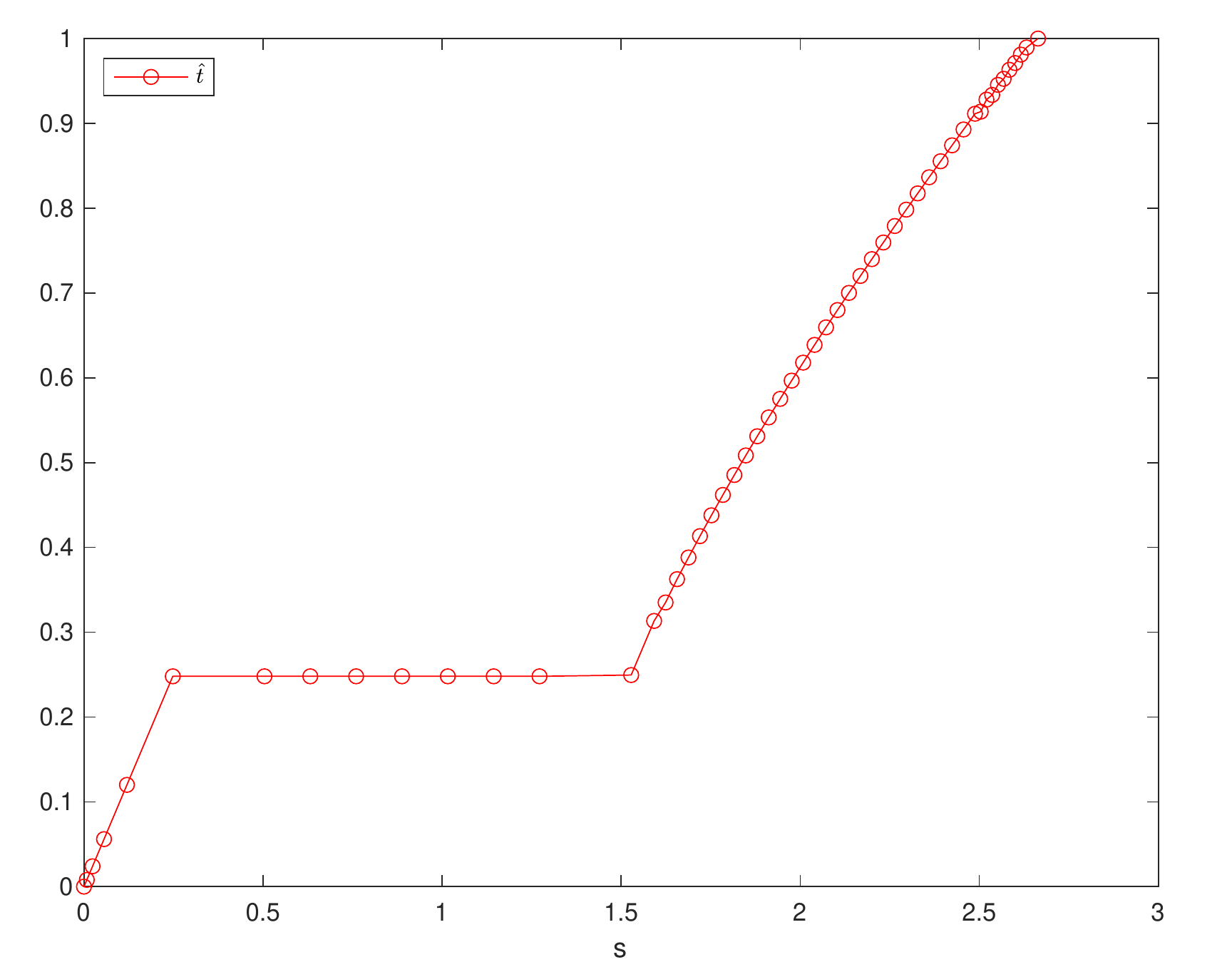}
    \subcaption{ $\hat t$ for $\tol = 10^{-4}$}
\end{subfigure}
\begin{subfigure}[c]{0.49\textwidth}
    \centering
 	\includegraphics[width=0.9\textwidth]{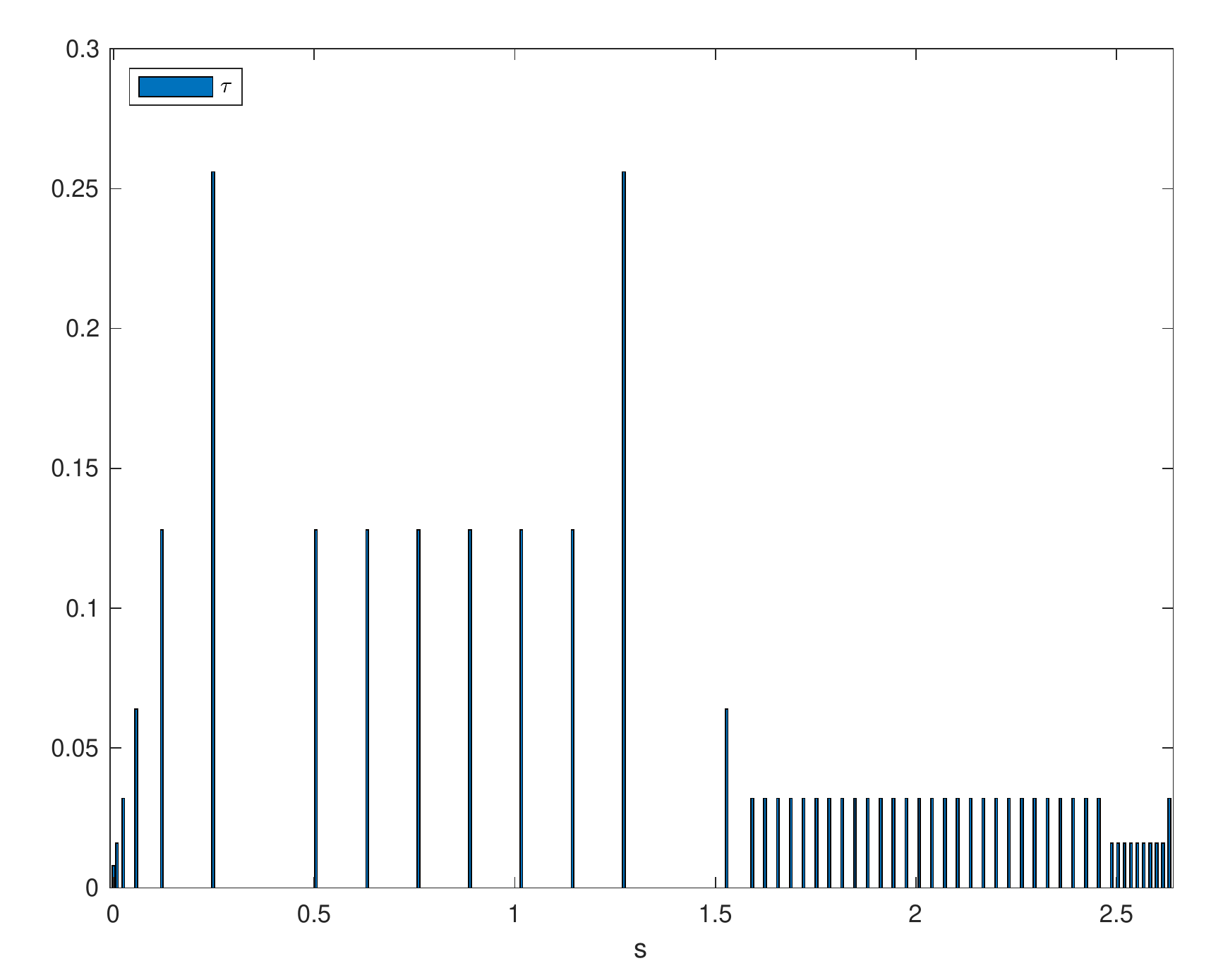}
    \subcaption{$\tau_k$ for $\tol = 10^{-4}$}
\end{subfigure}
\begin{subfigure}[c]{0.49\textwidth}
    \centering
 	\includegraphics[width=0.9\textwidth]{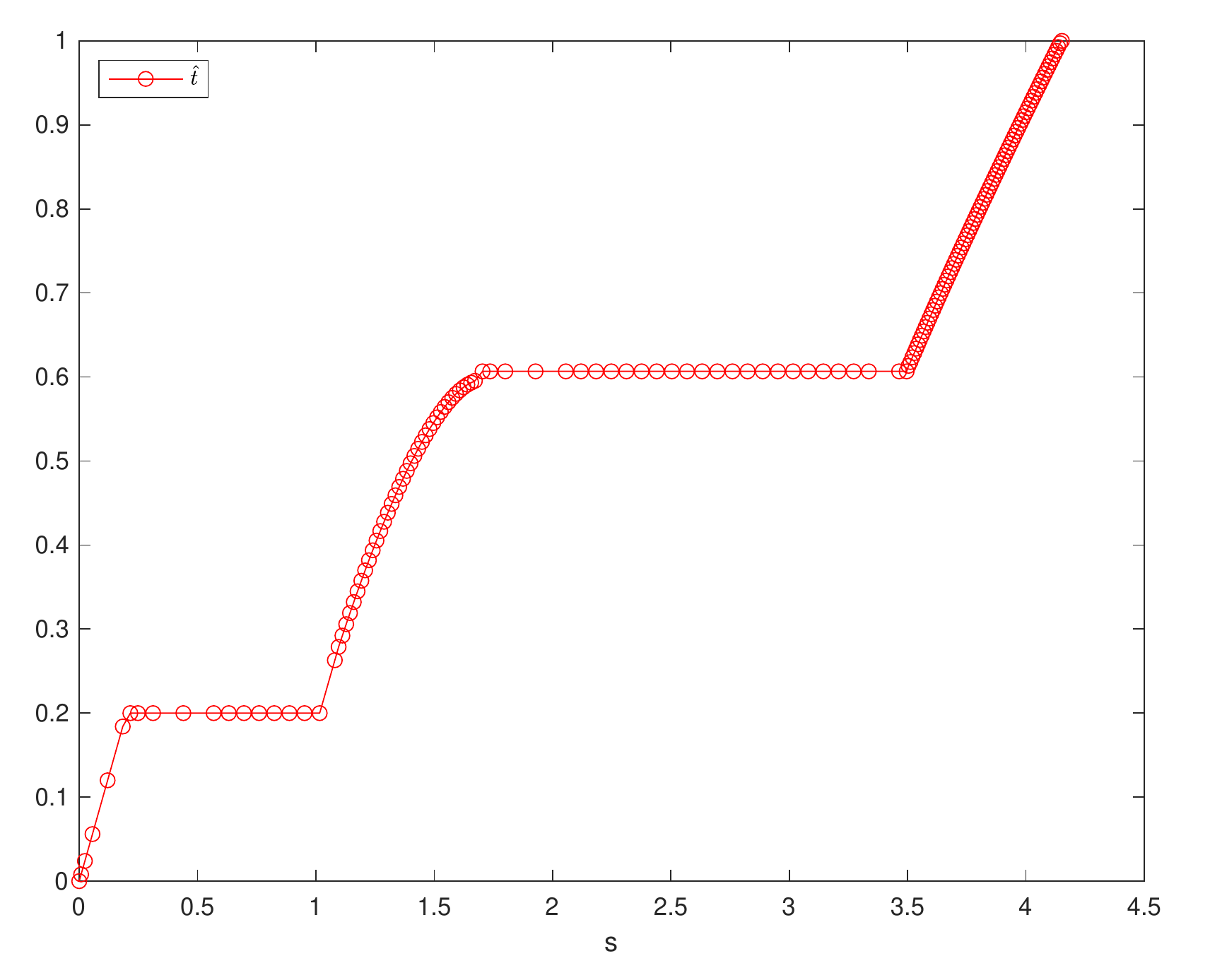}
    \subcaption{ $\hat t$ for $\tol = 10^{-5}$}
\end{subfigure}
\begin{subfigure}[c]{0.49\textwidth}
    \centering
 	\includegraphics[width=0.9\textwidth]{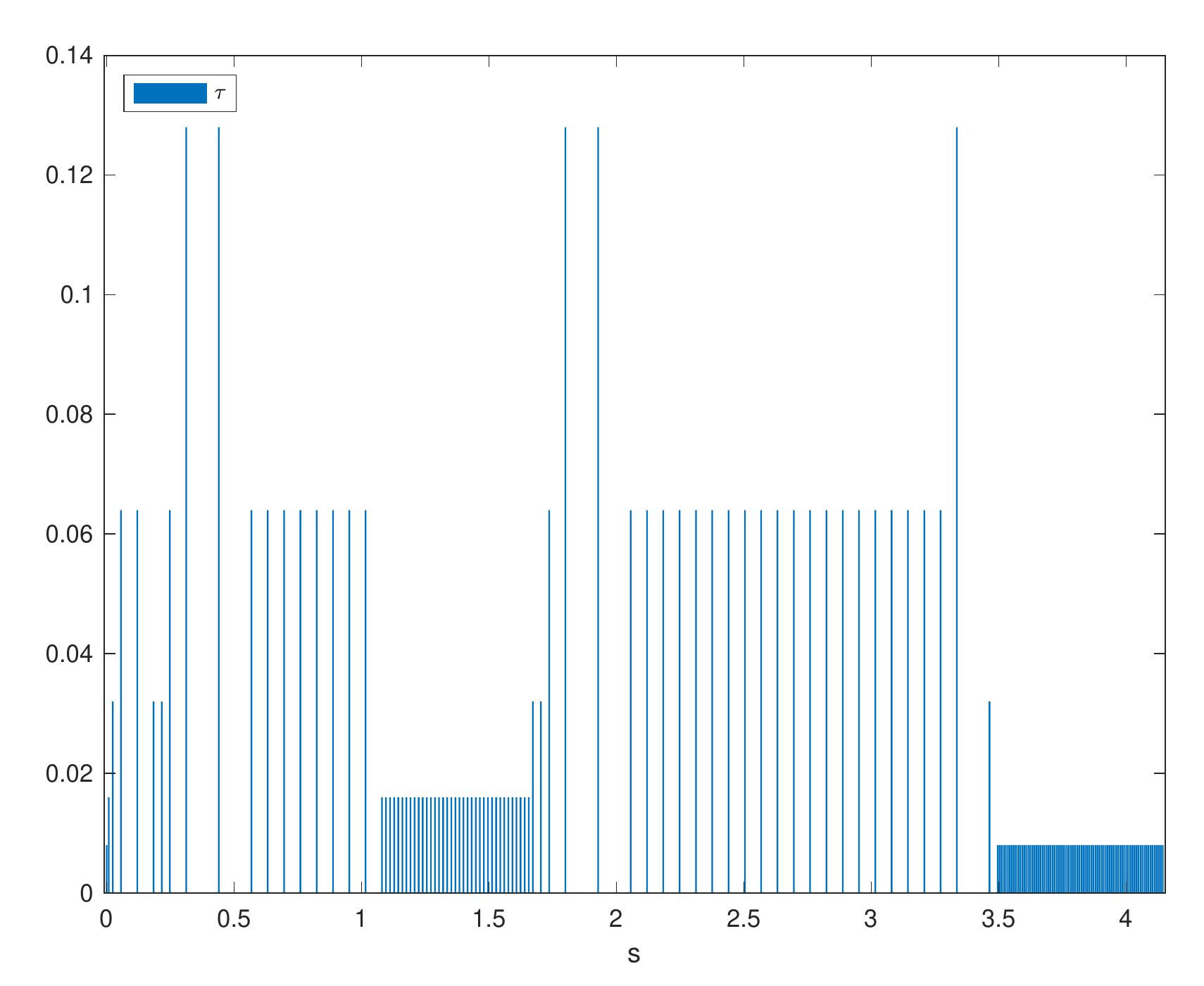}
    \subcaption{$\tau_k$ for $\tol = 10^{-5}$}
\end{subfigure}
\begin{subfigure}[c]{0.49\textwidth}
    \centering
 	\includegraphics[width=0.9\textwidth]{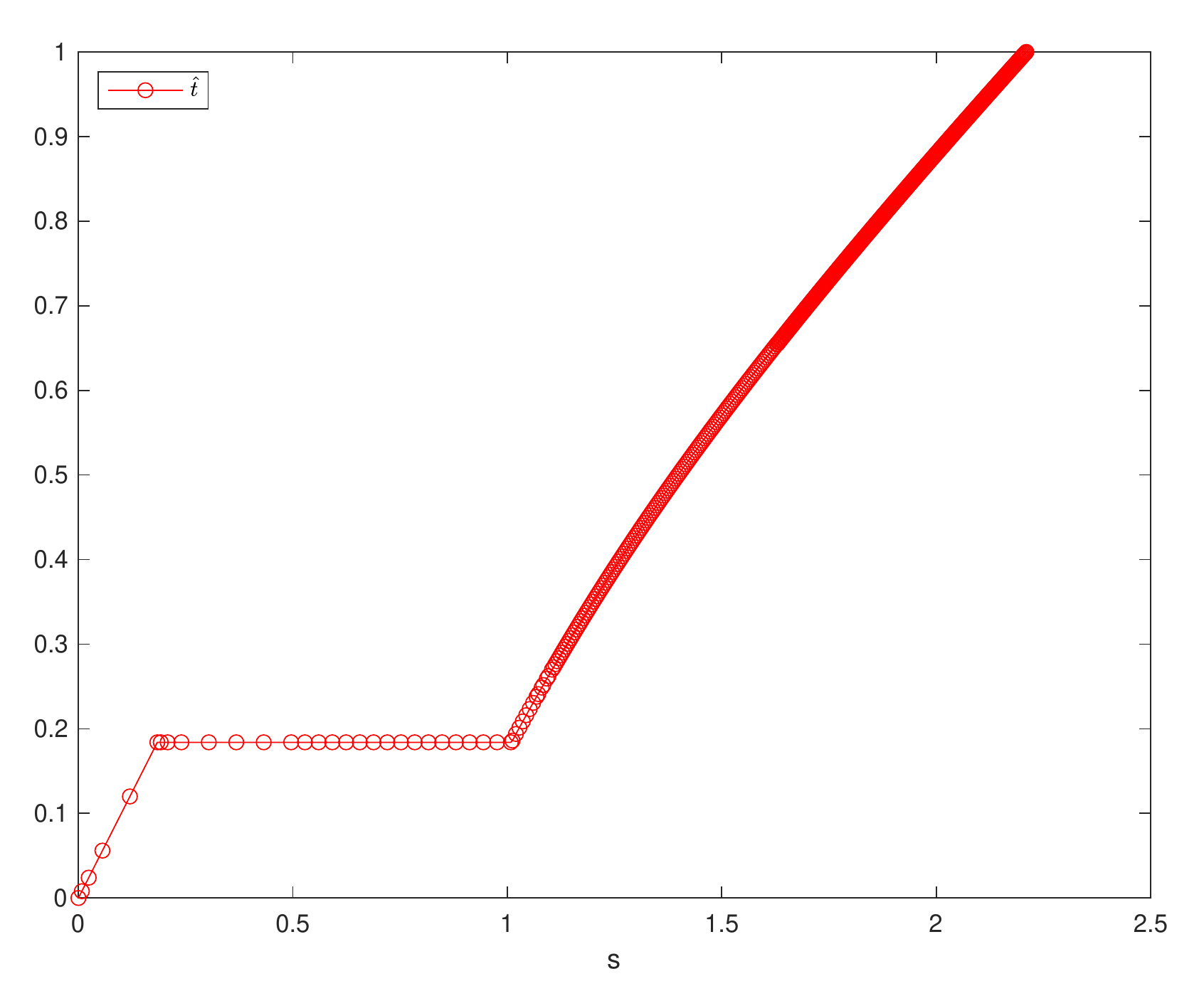}
    \subcaption{$\hat t$ for $\tol = 10^{-6}$}
\end{subfigure}
\begin{subfigure}[c]{0.49\textwidth}
    \centering
 	\includegraphics[width=0.9\textwidth]{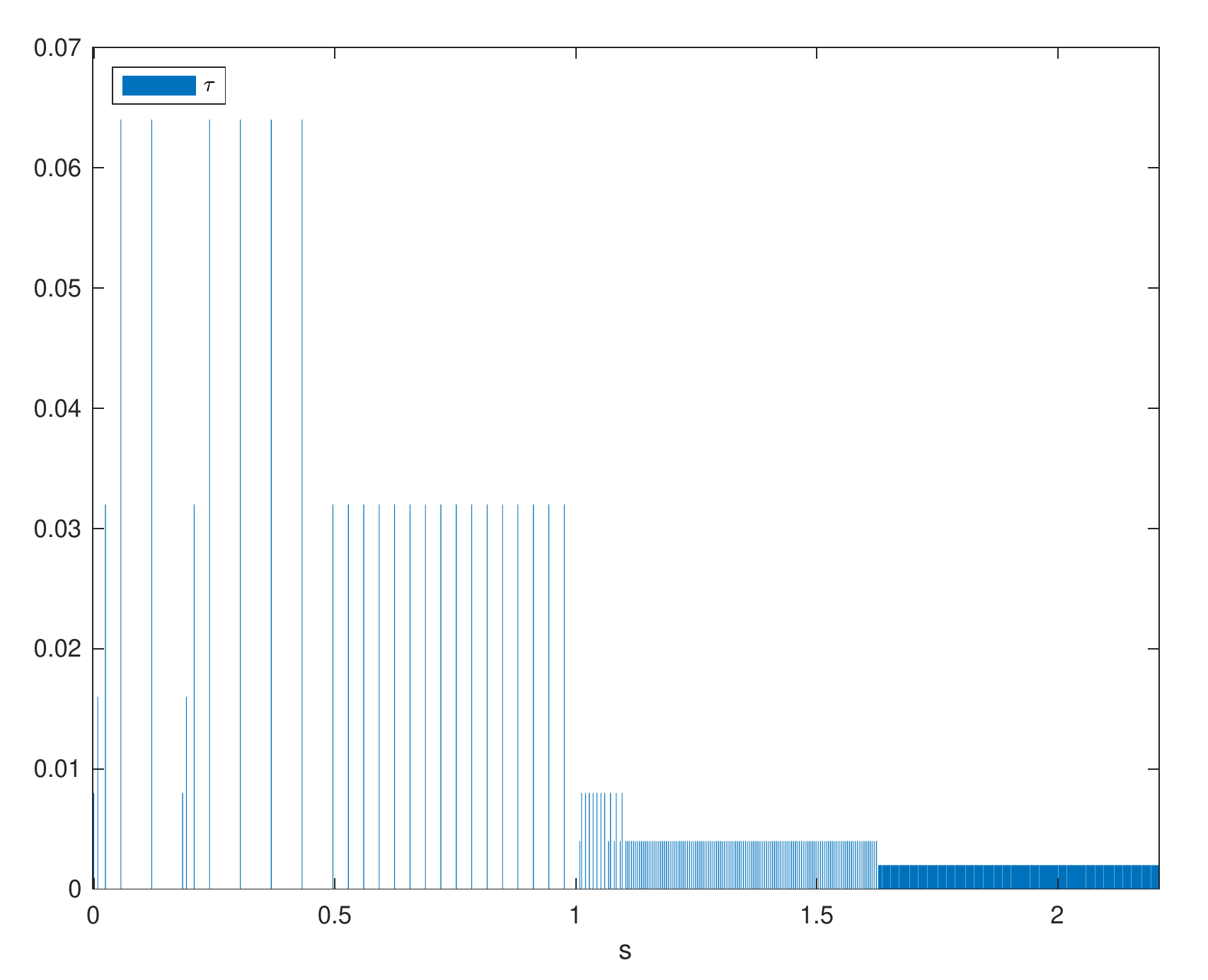}
    \subcaption{$\tau_k$ for $\tol = 10^{-6}$}
\end{subfigure}
\caption{Physical time and step size for $\tol = 10^{-4}$, $\tol = 10^{-5}$ and $\tol = 10^{-6}$}
\label{fig:pde}
\end{figure}
We first observe that, for $\tol = 10^{-5}$, seemingly another solution with an additional viscous jump 
is approximated than for the other tolerances. 
Note that, due to the nonconvexity of the double-well-type potential in \eqref{eq:doublewell}, 
there may well be multiple stationary points fulfilling \eqref{alg1} and \eqref{alg2} and 
it is just a matter of chance, which one is computed by the globalized semi-smooth Newton method.
Nevertheless, we again observe an increased step size during sticking and in viscous jumps.

Due to the ambiguity of numerical solutions and since an exact solution is unknown in this case, 
there is no error to evaluate and we investigate the residua in from \eqref{i1} and \eqref{i2} instead 
in order to compare the adaptive with the uniform scheme.
Figure \ref{fig:stepsresiduum} shows the local and the global residuum defined by
\begin{equation*}
    \max_{k = 1, ..., N_{\tol}} I_1^k + I^2_k 
    \quad \text{and} \quad 
    \sum_{k=1}^{N_{\tol}} I_1^k + I^2_k
\end{equation*}
for the adaptive and the uniform scheme. 
Note that both, $I_1^k$ and $I^2_k$, are non-negative by Lemma~\ref{lem:I2pos}.
Again, we observe the same qualitative behavior, but the adaptive scheme only needs approximately half of the steps 
to reach a residuum of the same size. The reduction of iterations is smaller compared to the previous
one-dimensional example, because the portion of the rate-independent slip regime is substantially larger 
compared to the example in Section~\ref{sec:ex1d}.
The kinks in the curves in Figure~\ref{fig:stepsresiduum} are due to the aforementioned ambiguity 
of numerical solutions with an additional viscous jump, which leads to 
an enlarged artificial time horizon and in this way increases the number of iterations without reducing the 
residuum.
\begin{figure}[h]
\begin{subfigure}[c]{0.49\textwidth}
    \centering
 	\includegraphics[width=\textwidth]{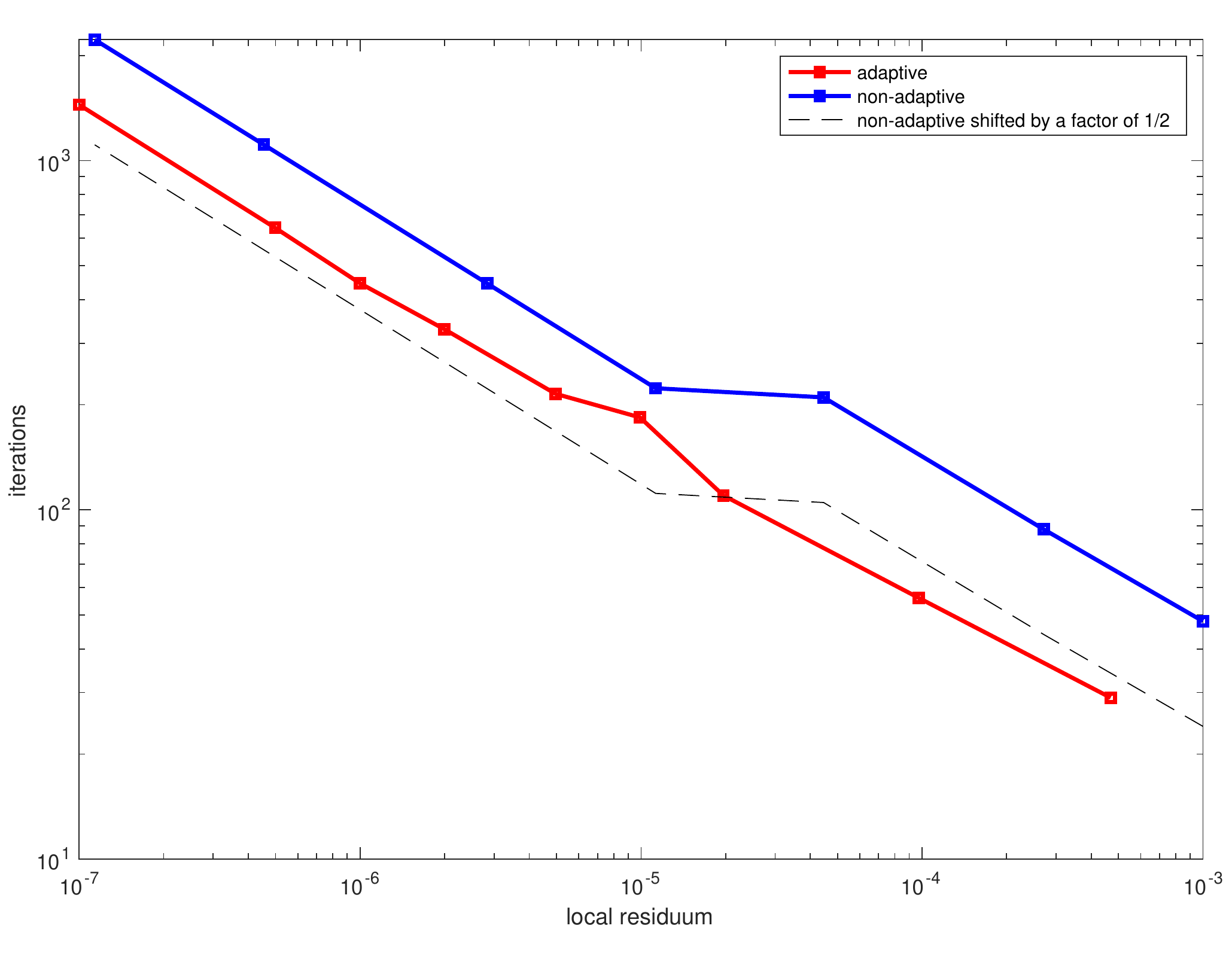}
\end{subfigure}
\begin{subfigure}[c]{0.49\textwidth}
    \centering
 	\includegraphics[width=\textwidth]{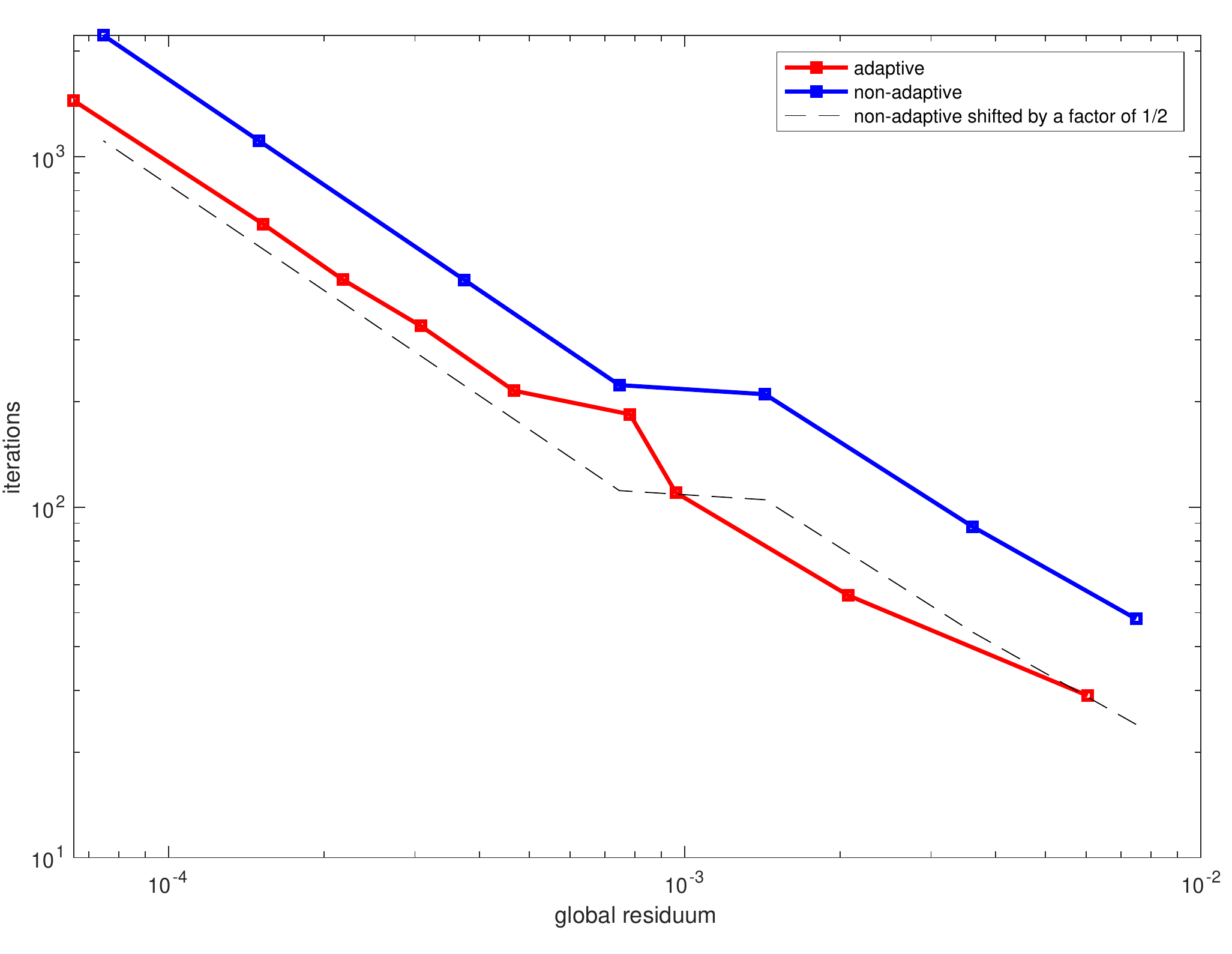}
\end{subfigure}
\caption{Comparison of number of iterations between adaptive and non-adaptive scheme
in the PDE example}\label{fig:stepsresiduum}
\end{figure}

\subsection*{Acknowledgement}
The authors are very greatful to Christian Kreuzer and Michael Sievers for several helpful discussions and advice.


\begin{appendix}

\section{Proof of Lemma~\ref{lem:I1I2}}\label{sec:I1I2}
    We start by proving \eqref{rabsch}. For this purpose, let $k\in \N$ and $s\in  (s_{k-1},s_k)$ be arbitrary.
	We first observe that the definition of the affine and constant interpolants
	imply for all $s\in (s_{k-1},s_k)$  that
	\begin{equation}
		\hat{z}'(s)=\frac{\hat{z}(s)-\bar{z}(s)}{s-s_k} \quad \text{and} \quad
		\hat{t}'(s)=\frac{\hat{t}(s)-\underline{t}(s)}{s-s_{k-1}}. \label{xy}
	\end{equation}
    With this at hand, we obtain
    \begin{equation}\label{eq:rest}
    \begin{aligned}
		r(s) &= \langle D_z\II(\hat{t}(s),\hat{z}(s)-D_z\II(\underline{t}(s),\bar{z}(s),\hat{z}'(s)\rangle_{\ZZ^*,\ZZ} \\
		&= -|s-s_k|\langle A\hat{z}'(s),\hat{z}'(s)\rangle_{\ZZ^*,\ZZ}
		+\Big\langle D_z\FF(\hat{z}(s))-D_z\FF(\bar{z}(s)),\frac{\hat{z}(s)-\bar{z}(s)}{s-s_k}\Big\rangle_{\VV^*,\VV}\\
		&\quad\; -\langle D_zf(\hat{t}(s),\hat{z}(s))-D_zf(\underline{t}(s),\bar{z}(s)),\hat{z}'(s)\rangle_{\VV^*,\VV}\\
		& \leq  - \alpha\,|s-s_k|\,\|\hat{z}'(s)\|_\ZZ^2
		+\frac{1}{|s-s_k|}\,|\langle D_z\FF(\bar{z}(s))-D_z\FF(\hat{z}(s)),\bar{z}(s)-\hat{z}(s)\rangle_{\VV^*,\VV}|\\
		&\quad\; +|\langle D_zf(\underline{t}(s),\bar{z}(s))-D_zf(\hat{t}(s),\hat{z}(s)),\hat{z}'(s)\rangle_{\VV^*,\VV}|.    
    \end{aligned}
    \end{equation}
	Thanks to \eqref{estF} with $\epsilon=\frac{\alpha}{4}$, 
	which is applicable here, since the interpolants are pointwise bounded on account of \eqref{zbeschr}, we conclude
    \begin{equation}\label{eq:Frest}
    \begin{aligned}
		& \frac{1}{|s-s_k|}|\langle D_z\FF(\bar{z}(s))-D_z\FF(\hat{z}(s)),\bar{z}(s)-\hat{z}(s)\rangle_{\VV^*,\VV}| \\
		& \qquad\qquad \leq \frac{\alpha}{4}\,|s-s_k|\,\|\hat{z}'(s)\|_\ZZ^2
		+ C_\alpha|s-s_k|\,\RR(\hat{z}'(s))\,\|\hat{z}'(s)\|_\VV.    
    \end{aligned}        
    \end{equation}
	Moreover, \eqref{estf} with $\epsilon=\frac{\alpha}{4}$ and \eqref{xy} yield
    \begin{equation}\label{eq:frest}
    \begin{aligned}
		& |\langle D_zf(\underline{t}(s),\bar{z}(s))-D_zf(\hat{t}(s),\hat{z}(s)),\hat{z}'(s)\rangle_{\VV^*,\VV}| \\
		&\qquad \qquad \leq\nu|s-s_{k-1}|\, \hat{t}'(s)\, \|\hat{z}'(s)\|_\VV
		+ c_\alpha |s-s_k|\, \RR(\hat{z}'(s))\|\hat{z}'(s)\|_\VV+\frac{\alpha}{4}\,|s-s_k|\, \|\hat{z}'(s)\|_\ZZ^2.
    \end{aligned}        
    \end{equation}
    Inserting \eqref{eq:Frest} and \eqref{eq:frest} in \eqref{eq:rest} and using the upper bound for the dissipation
    potential by assumption~\eqref{R3} then gives
	\begin{equation*}
		r(s)\leq C ( |s-s_k|\, \RR(\hat{z}'(s))\|\hat{z}'(s)\|_\V+|s-s_{k-1}|\,\hat{t}'(s)\|\hat{z}'(s)\|_\V) 
		\leq  C\, \tau_k\, \|\hat{z}'(s)\|_\V \big(\|\hat{z}'(s)\|_\V+\hat{t}'(s)\big),
	\end{equation*}
	which, in view of \eqref{beschr}, implies \eqref{rabsch}.
		
	Finally, \eqref{rabsch} together with the boundedness of the distance terms by \eqref{distbeschr} and 
	\eqref{distbeschr2} and the boundedness of $\hat{t}'(s)$ and $\|\hat z'(s)\|_{\V}$ by \eqref{beschr} 
	gives \eqref{e1} and \eqref{e2}.\hfill$\square$


\section{Proof of Lemma~\ref{tzbeschr}}\label{sec:tzbeschr}

	According to \eqref{zbeschr}, we already know that $\|\hat z\|_{L^\infty(0,s_k;\ZZ)} \leq C$.
	Hence, all we have to show is the boundedness of $	\hat{z}'_n$ in $L^2(0,s_k;\ZZ)$. 
	For this reason, we return to \eqref{q1} to conclude that, for every $i\in \N,$
	\begin{align*}
			&\lambda_{i+1}\|z_{i+1}-z_i\|_\V^2-\lambda_i\|z_i-z_{i-1}\|_\V\|z_{i+1}-z_i\|_\V
			+\frac{\alpha}{2} \|z_{i+1}-z_i\|_\ZZ^2\\
			& \qquad \leq (C_{\alpha/4} + c_{\alpha/4})\RR(z_{i+1}-z_i)\|z_{i+1}-z_i\|_\VV
			+\nu\,(t_i-t_{i-1})\|z_{i+1}-z_i\|_\VV\\
			& \qquad \leq C\, \tau_{i+1}\big(\RR(z_{i+1}-z_i)+t_i-t_{i-1}\big),
	\end{align*}
    where we exploited $\|z_{i+1}-z_i\|_\V\leq\tau_{i+1}$, cf.~Remark~\ref{rem:time}. 
    Now rearranging the terms and using \eqref{eig1} yields 
	\begin{align*}
		&\frac{\alpha}{2} \|z_{i+1}-z_i\|_\ZZ^2 \\
		&\quad \leq C\, \tau_{i+1}\big(\RR(z_{i+1}-z_i)+t_i-t_{i-1}\big)
		+\lambda_i\tau_{i+1}\|z_i-z_{i-1}\|_\V-\lambda_{i+1}\tau_{i+1}\|z_{i+1}-z_i\|_\V.
	\end{align*}
	From \eqref{q3} it follows by the same arguments
	\begin{equation*}
		\frac{\alpha}{2}\|z_1-z_0\|_\ZZ^2
		\leq C\,\tau_1\big(\RR(z_1-z_0)+\|D_z\II(0,z_0)\|_{\VV^*}\big)-\lambda_1\tau_1\|z_1-z_0\|_\V
	\end{equation*}
	Together with the estimate above, this implies
	\begin{align*}
		\|\hat{z}'\|_{L^2(0,s_k;\ZZ)}^2
		&\leq \sum_{i=0}^{k-1} \int_{s_i}^{s_{i+1}}\frac{\|z_{i+1}-z_i\|_\ZZ^2}{\tau_{i+1}^2}ds\\
		&=\sum_{i=0}^{k-1}\frac{\|z_{i+1}-z_i\|_\ZZ^2}{\tau_{i+1}} \\
		&\leq \frac{2}{\alpha}\Big( 
		\begin{aligned}[t]
		    & C\big(\RR(z_1-z_0)+\|D_z\II(0,z_0)\|_{\VV^*}\big)-\lambda_1\|z_1-z_0\|_\V \\
		    & +\sum_{i=1}^{k-1}C\big(\RR(z_{i+1}-z_i)+t_i-t_{i-1}\big)+\lambda_i\|z_i-z_{i-1}\|_\V
		    -\lambda_{i+1}\|z_{i+1}-z_i\|_\V\Big) 
		\end{aligned}\\
		&= \frac{2}{\alpha}\Big( C\,t_{k-1} -\lambda_k\|z_k-z_{k-1}\|_\V + C \,\|D_z\II(0,z_0)\|_{\VV^*}
		+ \sum_{i=0}^{k-1}C\,\RR(z_{i+1}-z_i) \Big) \\
		&\leq \frac{2}{\alpha}\,C \Big( T+\|D_z\II(0,z_0)\|_{\VV^*} + \sum_{i=0}^{k-1}\RR(z_{i+1}-z_i)\Big) 
		\leq C,
	\end{align*}
	where we used $\lambda_k\geq 0$ by Lemma~\ref{lem:stat} and 
	the boundedness of the dissipation by Lemma~\ref{irbeschr}.
\hfill$\square$


\section{Well Posedness of Algorithm~\ref{alg:nested}}\label{sec:nested}

To prove Proposition~\ref{prop:nested}, we have to verify Assumption~\ref{assu:nested} for 
Algorithm~\ref{alg:nested}.
First, we immediately observe that Assumption~\ref{assu:nested}\eqref{it:tolest} and \eqref{it:nested} are fulfilled 
by construction of the algorithm.

It thus remains to show that each inner iteration of Algorithm~\ref{alg:nested} 
(i.e., the iteration w.r.t.\ the index $k$) terminates after $N_n \in\N$ steps.
Too see this, we argue by induction.
For $n=1$, the inner iteration coincides with the one of Algorithm~\ref{alg:adaptlocmin} 
because of Steps~\ref{it:sigmagleichtau} and \ref{it:n1}.
Then, analogously to the proof of Proposition~\ref{endtime}, there holds
\begin{equation}\label{eq:tau1est}
    \tau^1_k \geq \min\Big\{\frac{\tol_1}{2\bar c}, \frac{\tau_1}{2}\Big\} := \tau^1_{\min}
    \quad \forall\, k \in \N
\end{equation}
and thus, an estimate as in \eqref{T} yields the existence of $N_1 \in \N$ such that $t^1_{N_1} > T$.
Now assume that, for some $n \geq 1$, there exists $N_n\in \N$ with $t^n_{N_n} > T$.
If we consider the iteration $n+1$, then an estimate of the form \eqref{eq:tau1est} (with $\tol_{n+1}$
instead of $\tol_1$) need not necessarily hold for all $k\in \N$ because of the modification of the step 
size in Step~\ref{it:taumod}. However, if, for some $k\in \N$, the step size 
$\tau^{n+1}_k$ arises from Step~\ref{it:taumod} and is less then 
\begin{equation*}
    \tau^{n+1}_{\min} := \min\Big\{\frac{\tol_{n+1}}{2\bar c}, \frac{\tau_1}{2}\Big\},
\end{equation*}
then, due to Lemma~\ref{lem:I1I2}, the conditions in Step~\ref{it:lesstol} will be met such that 
the iteration is accepted and we pass on to iteration $k+1$. The step size for this iteration is then
$\sigma^{n+1}_{k+1}$ 
(and \emph{not} $\tau^{n+1}_{k}$ as in the basic version of Algorithm~\ref{alg:adaptlocmin}).
By construction, $\sigma^{n+1}_j$, $j\in\N$, is  however only reduced, if $I^j_{1,n}$ and/or $I^j_{2,n}$ were above 
$\tol_{n+1}$, see Step~\ref{it:tauhalf}, and hence, $\sigma^{n+1}_j \geq \tau^{n+1}_{\min}$ for 
all $j\in \N$, again by Lemma~\ref{lem:I1I2}. 
These considerations show that a step size less than $\tau^{n+1}_{\min}$ can only 
appear at most $N_n$ times in the iteration $n+1$, which allows us to argue similarly to \eqref{T} as follows
\begin{equation}
\begin{aligned}
	t^{n+1}_k
	&= t_0^{n+1}+\sum_{i=1}^{k}\tau^{n+1}_i-\sum_{i=1}^{k}\|z^{n+1}_i-z^{n+1}_{i-1}\|_\V  \\
	&\geq (k - N_n)\, \tau^{n+1}_{\min}-\sum_{i=1}^{k}\|z^{n+1}_i-z^{n+1}_{i-1}\|_\V 
	\geq (k - N_n)\, \tau_{\min}^{n+1}-C ~\rightarrow \infty, \quad \text{as } k \to \infty,
\end{aligned}
\end{equation}
where we again used Lemma~\ref{lem:sumz}.
Note once more that the constant from Lemma~\ref{lem:sumz} does neither depend
on the tolerance nor on the step sizes.
This gives the existence of a constant $N_{n+1}$ such that $t^{n+1}_{N_{n+1}} > T$ as claimed.
\hfill$\square$


\section{A Lower Semicontinuity Result for the (Generalized) Distance}

\begin{lemma}\label{distuhs}
	Let $\{\eta_n\}_{n\in\N}\subset \ZZ^*$ be a sequence such that 
	$\eta_n\weak^* \eta$ in $\ZZ^*$ as $n\to\infty$ and $\dist_{\VV^*}\{-\eta_n,\partial\RR(0)\}\leq C$ for all $n\in\N$ 
	with a constant $C>0$. Then the distance is weakly lower semicontinuous, i.e., 
	\begin{equation*}
		\dist_{\VV^*}\{-\eta,\partial\RR(0)\} \leq \liminf_{n\to\infty}\dist_{\VV^*}\{-\eta_{n},\partial\RR(0)\}.
	\end{equation*}
\end{lemma}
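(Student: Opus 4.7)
The plan is to extract a near-minimizing sequence from the infimum defining each $\dist_{\VV^*}\{-\eta_n,\partial\RR(0)\}$, use the boundedness hypothesis to obtain a weakly convergent subsequence in $\VV^*$, and then pass to the limit using that $\partial\RR(0)$ is weakly closed in $\ZZ^*$.

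More concretely, first I would pass to a subsequence (not relabeled) along which $\dist_{\VV^*}\{-\eta_n,\partial\RR(0)\}$ converges to $\liminf_{n\to\infty}\dist_{\VV^*}\{-\eta_n,\partial\RR(0)\} =: L$. If $L = \infty$ there is nothing to show, so assume $L<\infty$. For each $n$, by the definition of the infimum in \eqref{eq:distdef}, there exists $w_n\in\VV^*$ with $w_n-\eta_n\in\partial\RR(0)$ and $\|w_n\|_{\V^{-1}}\leq \dist_{\VV^*}\{-\eta_n,\partial\RR(0)\} + \tfrac{1}{n}$. In particular $\{w_n\}$ is bounded in $\VV^*$ by the standing hypothesis of the lemma. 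Since $\VV$ is Hilbert, $\VV^*$ is reflexive, so a further subsequence satisfies $w_n\rightharpoonup w$ in $\VV^*$ for some $w\in\VV^*$, and weak lower semicontinuity of $\|\cdot\|_{\V^{-1}}$ yields $\|w\|_{\V^{-1}}\leq \liminf_{n\to\infty}\|w_n\|_{\V^{-1}} = L$.

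Next I would pass from $\VV^*$ to $\ZZ^*$. The continuous embedding $\ZZ\hookrightarrow\VV$ induces a continuous embedding $\VV^*\hookrightarrow\ZZ^*$, so $w_n\rightharpoonup w$ in $\ZZ^*$. Combined with the assumption $\eta_n\rightharpoonup^*\eta$ in $\ZZ^*$ (which is weak convergence since $\ZZ^*$ is reflexive), this gives $w_n-\eta_n\rightharpoonup w-\eta$ in $\ZZ^*$. Because $\RR$ is proper, convex, and lower semicontinuous on $\ZZ$, the subdifferential $\partial\RR(0)\subset\ZZ^*$ is convex and norm-closed, hence weakly closed. Since $w_n-\eta_n\in\partial\RR(0)$ for all $n$, we conclude $w-\eta\in\partial\RR(0)$, which means $w\in\VV^*\cap(\partial\RR(0)+\eta)$ is admissible in the infimum defining $\dist_{\VV^*}\{-\eta,\partial\RR(0)\}$. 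Therefore
\begin{equation*}
    \dist_{\VV^*}\{-\eta,\partial\RR(0)\}\leq \|w\|_{\V^{-1}}\leq L
    = \liminf_{n\to\infty}\dist_{\VV^*}\{-\eta_n,\partial\RR(0)\},
\end{equation*}
which is the claim.

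The only subtlety is that the minimizers $w_n$ live in $\VV^*$ while the subdifferential condition is naturally tested in $\ZZ^*$; keeping careful track of the two spaces (and the fact that weak convergence in $\VV^*$ transfers to $\ZZ^*$ via the embedding) is the main point, but no obstacle arises because $\partial\RR(0)$ is weakly closed in $\ZZ^*$ and the $\V^{-1}$-norm is weakly lower semicontinuous on $\VV^*$.
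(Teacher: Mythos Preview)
Your proof is correct and follows essentially the same approach as the paper's: extract a (near-)minimizing sequence bounded in $\VV^*$, pass to a weak limit, and use the weak closedness of $\partial\RR(0)$ in $\ZZ^*$ together with the weak lower semicontinuity of $\|\cdot\|_{\V^{-1}}$. The only cosmetic differences are that the paper first shows the infimum is attained and then argues by contradiction, whereas you work directly with $\tfrac{1}{n}$-near-minimizers along a liminf-realizing subsequence; both routes rest on the same two ingredients.
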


\begin{proof}
    The proof is similar to \cite[Theorem~3.9]{FElocmin} or \cite[Lemma~3.2.18]{michael}.
 	First, we show by the direct method that the infimum in the definition of the distance is attained for every $n\in \N$. 
 	To this end, let $n\in \N$ be arbitrary and $\{\xi_m^n\}_{m\in \N} \subset \partial \RR(0)$ be a minimizing sequence, 
 	i.e., $\|\xi_m^n + \eta_n\|_{\V^{-1}} \to \dist_{\VV^*}\{-\eta_{n},\partial\RR(0)\}$ as $m\to \infty$.
 	Then, due to the boundedness of the distance by the assumption, there is a subsequence, 
 	denoted by the same symbol, such that $\xi_m^n + \eta_n \weak^* w$ in $\VV^*$ as $m\to\infty$ and therefore
 	$\xi_m^n \weak^* w - \eta_n =: \xi_n$ in $\ZZ^*$. As a subset of $\ZZ^*$ the convex subdifferential reads
 	\begin{equation*}
 	    \partial \RR(0) = \{ z^* \in \ZZ^* : \dual{z^*}{z}_{\ZZ^*, \ZZ} \leq \RR(z) \; \forall\, z\in \ZZ\}
 	\end{equation*}
    (where we used the positive homogeneity of $\RR$) such that $\partial \RR(0)$ is closed w.r.t.\ weak*
    convergence in $\ZZ^*$, which in turn implies $\xi_n\in \partial\RR(0)$. 
    Moreover, the weak lower semicontinuity of $\|\cdot \|_{\V^{-1}}$ gives
 	$\|\xi_n + \eta_n\|_{\V^{-1}} \leq \dist_{\VV^*}\{-\eta_{n},\partial\RR(0)\}$, which shows the optimality of $\xi_n$.
 	
 	To show the result, we argue similarly. Assume the assertion is wrong such that there exist a subsequence
 	$\{\eta_{n_\ell}\}_{\ell\in \N}$ and $\varepsilon > 0$ such that 
 	\begin{equation}\label{eq:distcontra}
 	    \dist_{\VV^*}\{-\eta_{n_\ell},\partial\RR(0)\} 
 	    \leq \dist_{\VV^*}\{-\eta,\partial\RR(0)\}  - \varepsilon.
 	\end{equation}
 	If we define $w_{n_\ell}:=\xi_{n_\ell}+\eta_{n_\ell}$, then, by assumption, 
 	$w_{n_\ell}$ is bounded in $\VV^*$ and there exists a subsequence, denoted by the same symbol for simplicity, 
 	such that $w_{n_\ell} \weak^* w$ in $\VV^*$ and therefore
 	\begin{equation*}
 	    \xi_{n_\ell} = w_{n_\ell} - \eta_{n_\ell} \weak^* w - \eta =: \xi \quad \text{in }\ZZ^*.
 	\end{equation*}
 	Again the weak closedness of $\partial\RR(0)$ yields $\xi \in \partial\RR(0)$, 
 	which, together with the weak lower semicontinuity of $\| \cdot \|_{\V^{-1}}$, implies 
 	\begin{equation*}
 	    \dist_{\VV^*}\{-\eta,\partial\RR(0)\}  
 	    \leq \|\xi + \eta\|_{\V^{-1}} 
 	    \leq \liminf_{\ell\to\infty} \|\xi_{n_\ell} + \eta_{n_\ell}\|_{\V^{-1}} 
 	    =  \liminf_{\ell\to\infty} \dist_{\VV^*}\{-\eta_{n_\ell},\partial\RR(0)\}, 
 	\end{equation*}
 	which contradicts \eqref{eq:distcontra}.
\end{proof}

\end{appendix}

\bibliographystyle{acm} \bibliography{literatur}

\end{document}